\newfont{\gothic}{eufm10 scaled 1100}
\theoremstyle{plain}
\newtheorem{thm}{Theorem}[section]
\numberwithin{equation}{section} 
\numberwithin{figure}{section} 
\theoremstyle{plain}
\newtheorem{cor}[thm]{Corollary} 
\theoremstyle{plain}
\theoremstyle{plain}
\theoremstyle{plain}
\newtheorem{lem}[thm]{Lemma} 
\theoremstyle{plain}
\newtheorem{prop}[thm]{Proposition} 
\theoremstyle{definition}
\newtheorem{exm}[thm]{Example} 
\theoremstyle{definition}
\newtheorem{const}[thm]{Construction} 
\theoremstyle{definition}
\newtheorem{Def}[thm]{Definition} 
\theoremstyle{remark}
\newtheorem{rem}[thm]{Remark}
\theoremstyle{remark}
\begin{document}

\title{Crystallographic Tilings}

\date{\today}

\author{Hawazin Alzahrani and Thomas Eckl}

\keywords{}

\subjclass{}


\address{Hawazin Alzahrani and Thomas Eckl, Department of Mathematical Sciences, The University of Liverpool, Mathematical
               Sciences Building, Liverpool, L69 7ZL, England, U.K.}

\email{thomas.eckl@liv.ac.uk}

\urladdr{http://pcwww.liv.ac.uk/~eckl/}

\maketitle

\begin{abstract}
Crystallographic tilings of the Euclidean space $\mathbb{E}^n$ are defined as simple tilings whose group of isometric automorphisms is crystallographic. To classify crystallographic tilings by their automorphism groups it is necessary to extend the standard equivalence relation of mutual local derivability to a version taking more general isometries than translations into account. This also requires the extension of the standard metrics on tiling spaces. Finally, a tiling with a given crystallographic group as automorphism group is constructed.
\end{abstract}


\pagestyle{myheadings}
\markboth{H.~ALZAHRANI AND T.~ECKL}{CRYSTALLOGRAPHIC TILINGS}

\setcounter{section}{-1}

\section{Introduction}

\noindent The study of crystallographic groups and tilings was intimately intertwined since the very beginnings of the subject (see \cite[Ch.1]{Sen95} for a nice account of the history before the 20th century). However, since the early constructions of aperiodic tilings by Berger \cite{Ber66}, Penrose \cite{Pen79} and others, the startling discovery of quasicrystals by Schechtman \cite{She84} in nature and the subsequent development of mathematical tools, borrowing a lot from ergodic theory and the theory of dynamical systems, to construct (aperiodic) tilings, to describe their properties and to draw conclusions on quasicrystals (see Sadun's introduction to \cite{Sad08} for a short but concise outline) not much work seemed to be devoted to investigations on the connections between crystallographic groups and tilings, systematically and in the light of the newly developed mathematical frameworks.

\noindent To start with, the authors were not able to locate anywhere in the literature the following natural definition:
\begin{Def}[$=$ Definition~\ref{cryst-tile-def}]
A crystallographic tiling of the Euclidean space $\mathbb{E}^n$ is a (simple) tiling whose automorphism group of isometries of $\mathbb{E}^n$ is crystallographic.
\end{Def}

\noindent Note that we restrict our investigations to \textit{simple tilings}, that is the tiles are assumed to be convex polytopes meeting facet-to-facet, and there is only a finite number of prototiles, up to isometries (not only translations). For a further discussion of this assumption see the beginning of Section~\ref{tilings-sec}.

\noindent Second, it turns out that even the strongest equivalence relation developed for tilings respectively the metric hulls of all their translates, that is \textit{mutual local derivability}, is not able to distinguish between two tilings with two different (non-conjugated) crystallographic groups as automorphism groups: We discuss a basic instance of this phenomenon in Ex.~\ref{MLD-trans-ex}. The remedy for this unsatisfactory situation is to introduce more isometries than just translations into the definition of local derivability (see Sec.~\ref{tiling-defs-ssec} for notation):
\begin{Def}[$=$ Definition~\ref{LD-def}]
Let $\gamma$ be an isometry of $\mathbb{E}^n$. A simple tiling $T^\prime$ of $\mathbb{E}^n$ is \textit{$\gamma$-locally derivable from} a simple tiling $T$ of $\mathbb{E}^n$ if there exists a real number $R > 0$ such that for all $x \in \mathbb{E}^n$ and $\phi \in \mathrm{Isom}(\mathbb{E}^n)$,
\[ [T]_{B_R(x)} = [\phi(T)]_{B_R(x)}\ \Rightarrow [T^\prime]_{\{\gamma(x)\}} = [\gamma\phi\gamma^{-1}T^\prime]_{\{\gamma(x)\}}. \]
\end{Def}

\noindent Conjugation with an isometry $\gamma$ is necessary to overcome the non-commutativity of $\mathrm{Isom}(\mathbb{E}^n)$ when proving analogues to statements on local derivability defined by using only translations (see Sec.~\ref{LD-ssec}). But it also makes sense geometrically: A rotated or reflected tiling still "looks the same" if one changes the point of view accordingly.

\noindent Next, moving around tilings by general isometries and not only shifting them by translations when defining local derivability requires similar changes in the definition of hulls of tilings and tiling spaces in general: They must be invariant under the action of $\mathrm{Isom}(\mathbb{E}^n)$, not just of $\mathrm{Trans}(\mathbb{E}^n)$. Furthermore, the tiling spaces must be closed with respect to a metric also taking into account general isometries:
\begin{Def}[$=$ Definition~\ref{dist-tilings-def}]
Let $T, T^\prime$ be two simple tilings of $\mathbb{E}^n$. Define $R(T, T^\prime) > 0$ to be the supremum of all $r>0$ such that there exist $\phi, \psi \in \mathrm{Isom}(\mathbb{E}^n)$ satisfying $d_O(\phi, \mathbbm{1}_{\mathbb{E}^n}), d_O(\psi, \mathbbm{1}_{\mathbb{E}^n}) < \frac{1}{2r}$ and
\[ \left[ \phi(T) \right]_{B_r(O)} = \left[ \psi(T^\prime) \right]_{B_r(O)}. \]
Then the \textit{distance} between $T$ and $T^\prime$ is set to be
\[ d_O(T, T^\prime) := \min \{\ln(3/2), \ln(1+\frac{1}{R(T,T^\prime)})\}. \]
\end{Def}

\noindent Here, $d_O$ also denotes the product metric on $\mathrm{Isom}(\mathbb{E}^n) = \mathrm{Trans}(\mathbb{E}^n) \rtimes O(\mathbb{E}^n_O)$ where we use the standard Euclidean metric on $\mathrm{Trans}(\mathbb{E}^n)$ and the operator norm on $O(\mathbb{E}^n_O)$, with fixed origin $O$. The properties of $d_O$, in particular its dependence on the choice of $O$, are discussed in Sec.~\ref{cryst-groups-ssec}.

\noindent In Sections~\ref{tilings-sec} and \ref{equiv-tilings-sec} we work out all the details of these constructions and prove their natural properties. In Section~\ref{cryst-tilings-sec} we use them to prove our main results on crystallographic tilings:
\begin{thm}[$=$ Theorem~\ref{MLD-cryst-thm}]
Two crystallographic tilings of $\mathbb{E}^n$ are $\gamma$-MLD if and only if their automorphism groups are conjugated by the isometry $\gamma$ of $\mathbb{E}^n$.
\end{thm}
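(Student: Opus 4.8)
The plan is to prove the two implications separately: one of them holds for arbitrary simple tilings, and all of the genuinely new content is concentrated in a single recognisability statement for crystallographic tilings. For the direction ``$\gamma$-MLD $\Rightarrow$ automorphism groups conjugated by $\gamma$'', which uses nothing about crystallographic groups, I would unravel $\gamma$-MLD as the conjunction that $T'$ is $\gamma$-locally derivable from $T$ (say with radius $R$) and $T$ is $\gamma^{-1}$-locally derivable from $T'$, and then feed an automorphism into Definition~\ref{LD-def}: for $\phi \in \mathrm{Aut}(T)$ one has $\phi(T) = T$, so the hypothesis $[T]_{B_R(x)} = [\phi(T)]_{B_R(x)}$ holds for \emph{every} $x \in \mathbb{E}^n$, whence $[T']_{\{\gamma(x)\}} = [\gamma\phi\gamma^{-1}T']_{\{\gamma(x)\}}$ for every $x$; since $\gamma$ is a bijection of $\mathbb{E}^n$ this forces $\gamma\phi\gamma^{-1}T' = T'$, i.e. $\gamma\phi\gamma^{-1} \in \mathrm{Aut}(T')$. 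Thus $\gamma\,\mathrm{Aut}(T)\,\gamma^{-1} \subseteq \mathrm{Aut}(T')$, and the opposite inclusion comes from ``$T$ is $\gamma^{-1}$-locally derivable from $T'$'' in exactly the same way.

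For the converse, assume $\mathrm{Aut}(T') = \gamma\,\mathrm{Aut}(T)\,\gamma^{-1}$; then $\Delta := \mathrm{Aut}(T)$ and $\Delta' := \mathrm{Aut}(T')$ are both crystallographic, since conjugation by an isometry preserves discreteness and cocompactness. I would reduce everything to the following \emph{recognisability lemma}: if $S$ is a crystallographic tiling with $\mathrm{Aut}(S) = \Delta$, then there is $R_0 > 0$ such that, for all $x \in \mathbb{E}^n$ and all $\phi \in \mathrm{Isom}(\mathbb{E}^n)$, the equality $[S]_{B_{R_0}(x)} = [\phi(S)]_{B_{R_0}(x)}$ already forces $\phi \in \Delta$. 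Granting this for both $T$ and $T'$ and taking $R := \max\{R_0(T), R_0(T')\}$, the chain $[T]_{B_R(x)} = [\phi(T)]_{B_R(x)} \Rightarrow \phi \in \mathrm{Aut}(T) \Rightarrow \gamma\phi\gamma^{-1} \in \gamma\,\mathrm{Aut}(T)\,\gamma^{-1} = \mathrm{Aut}(T') \Rightarrow \gamma\phi\gamma^{-1}T' = T' \Rightarrow [T']_{\{\gamma(x)\}} = [\gamma\phi\gamma^{-1}T']_{\{\gamma(x)\}}$ shows that $T'$ is $\gamma$-locally derivable from $T$, and the mirror computation (with $R_0(T')$ and $\mathrm{Aut}(T) = \gamma^{-1}\mathrm{Aut}(T')\gamma$) shows that $T$ is $\gamma^{-1}$-locally derivable from $T'$, so $T$ and $T'$ are $\gamma$-MLD.

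To prove the recognisability lemma I would use cocompactness as follows. Since $\Delta$ is a discrete cocompact subgroup of $\mathrm{Isom}(\mathbb{E}^n)$, the coset space $\mathrm{Isom}(\mathbb{E}^n)/\Delta$ is compact; combined with the properties of the hull and of the metric $d_O$ from Sections~\ref{tilings-sec} and \ref{equiv-tilings-sec} (continuity of the $\mathrm{Isom}(\mathbb{E}^n)$-action, Hausdorffness, and injectivity of the orbit map $\phi\Delta \mapsto \phi(S)$ because $\mathrm{Aut}(S) = \Delta$), this makes the orbit map a homeomorphism of $\mathrm{Isom}(\mathbb{E}^n)/\Delta$ onto the hull of $S$ (its image is compact, hence closed, and dense, hence everything), so that $\mathrm{Isom}(\mathbb{E}^n) \to (\text{hull of }S)$, $\phi \mapsto \phi(S)$, is a covering map. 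Then I argue by contradiction: if the lemma fails there are $\phi_k \notin \Delta$ and $x_k$ with $[S]_{B_k(x_k)} = [\phi_k(S)]_{B_k(x_k)}$; choosing a compact fundamental domain $D \subseteq B_\rho(O)$ for $\Delta$ and $\delta_k \in \Delta$ with $\delta_k^{-1}(x_k) \in D$, and replacing $\phi_k$ by $\delta_k^{-1}\phi_k\delta_k \notin \Delta$ (applying $\delta_k^{-1}$ and using $\delta_k^{\pm 1}S = S$), I may assume $\phi_k(S) = S$ on $B_k(\delta_k^{-1}x_k) \supseteq B_{k-\rho}(O)$, hence $\phi_k(S) \to S$ in $d_O$. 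Lifting this convergence through the covering near $\mathbbm{1}_{\mathbb{E}^n}$ yields $\gamma_k \in \Delta$ with $\psi_k := \phi_k\gamma_k \to \mathbbm{1}_{\mathbb{E}^n}$ in $\mathrm{Isom}(\mathbb{E}^n)$, while $\psi_k \notin \Delta$ and $\psi_k(S) = \phi_k(S) = S$ on $B_{k-\rho}(O)$. A rigidity step then closes the argument: fixing $R_1$ so large that the vertices of $S$ lying in $B_{R_1}(O)$ affinely span $\mathbb{E}^n$ (the prototiles are bounded) and letting $\delta_0 > 0$ be the minimal distance between two distinct vertices of $S$ (positive since $S$ is simple with finitely many prototiles up to isometry), for large $k$ the isometry $\psi_k^{-1}$ moves $B_{R_1}(O)$ by less than $\delta_0$, so for each vertex $v$ of $S$ in $B_{R_1}(O)$ the point $\psi_k^{-1}(v)$ is again a vertex of $S$ at distance $< \delta_0$ from $v$ and therefore equals $v$; hence $\psi_k$ fixes an affine basis of $\mathbb{E}^n$, so $\psi_k = \mathbbm{1}_{\mathbb{E}^n} \in \Delta$, a contradiction.

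The main obstacle is the recognisability lemma, and within it two points need care: preventing the centres $x_k$ of the growing patches from running off to infinity — this is exactly where cocompactness enters, via conjugation by fundamental-domain elements of $\Delta$, which converts ``agreement on $B_k(x_k)$'' into ``agreement on a ball of radius $\to \infty$ about a fixed point'' — and making the compactness-and-covering argument precise in the metric $d_O$, i.e. verifying for a crystallographic tiling that the hull really is $\mathrm{Isom}(\mathbb{E}^n)/\Delta$ and that the orbit map is a local homeomorphism at $\mathbbm{1}_{\mathbb{E}^n}$, which is where the properties of $d_O$ from the earlier sections must be invoked carefully. The concluding rigidity argument is routine once the standard finite-local-complexity properties of simple tilings are in hand.
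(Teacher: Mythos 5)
Your proposal is correct, and its top-level structure coincides with the paper's: the ``only if'' direction is exactly the easy computation in the first half of Theorem~\ref{LD-cryst-thm} (feed an automorphism into Definition~\ref{LD-def}), and your recognisability lemma is precisely the pivotal Claim in the second half of that proof ($[T]_{B_R(x)}=[\rho(T)]_{B_R(x)}$ forces $\rho\in\mathrm{Aut}(T)$), from which the theorem follows just as you describe. Where you genuinely diverge is in how that claim is proved. The paper argues effectively and combinatorially: by Bieberbach's Theorem~\ref{Bieb-thm} the translation lattice $\mathrm{Aut}(T)\cap\mathrm{Trans}(\mathbb{E}^n)$ has full rank, $R$ is chosen so that its translates of $B_R(x)$ cover $\mathbb{E}^n$, and explicit patch manipulations show that $\rho$ conjugates the lattice generators back into $\mathrm{Aut}(T)$ and hence maps every tile to a tile; this yields a concrete LD-radius and stays entirely inside the tiling formalism, never invoking the hull or the metric $d_O$. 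You instead prove recognisability softly, by contradiction: cocompactness of the point action brings the centres $x_k$ back to a fixed ball, the identification $\Omega_S\cong\mathrm{Isom}(\mathbb{E}^n)/\mathrm{Aut}(S)$ of Proposition~\ref{cryst-hull-prop} together with the covering $\mathrm{Isom}(\mathbb{E}^n)\to\mathrm{Isom}(\mathbb{E}^n)/\mathrm{Aut}(S)$ lets you lift $\phi_k(S)\to S$ to isometries $\psi_k\to\mathbbm{1}_{\mathbb{E}^n}$ outside $\mathrm{Aut}(S)$ agreeing with $S$ on huge balls, and a rigidity argument on the vertex set kills them. This buys conceptual economy -- you never need Bieberbach (cocompactness of the action on $\mathbb{E}^n$ already follows from the definition of crystallographic), and the argument isolates recognisability as a dynamical ``no small symmetries'' statement -- at the price of being non-effective (no explicit $R_0$) and of shifting weight onto Proposition~\ref{cryst-hull-prop} and the continuity statements for $d_O$, whose proofs in the paper are themselves only sketched, so you would have to carry out the orbit-map/covering verification carefully. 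Two small points to tighten: uniform discreteness of the vertex set needs the facet-to-facet condition in the definition of a simple tiling, not merely finiteness of the prototiles up to isometry (without facet-to-facet it is false); and the lifting step should be phrased so that it only uses that the quotient by the discrete subgroup $\mathrm{Aut}(S)$ acting by right multiplication is a covering (or a direct local-compactness argument), since the hull itself is not a priori given as a covering target.
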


\begin{thm}[$=$ Theorem~\ref{cryst-group-tiling-thm}]
For a given crystallographic group $\Gamma \subset \mathrm{Isom}(\mathbb{E}^n)$ there exists a simple tiling $T$ of $\mathbb{E}^n$ such that $\mathrm{Aut}(T) = \Gamma$.
\end{thm}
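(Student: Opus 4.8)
The plan is to realize $\Gamma$ as the automorphism group of a suitably \emph{rigidified} Voronoi tiling. First choose a point $x_0 \in \mathbb{E}^n$ whose stabilizer in $\Gamma$ is trivial; this is possible because $\Gamma$ is countable and the fixed-point set of each non-identity element of $\Gamma$ is a proper affine subspace (possibly empty), so by a Baire-category argument their union is not all of $\mathbb{E}^n$. Since $\Gamma$ is crystallographic it acts cocompactly and properly discontinuously, so the orbit $X := \Gamma x_0$ is a Delone set on which $\Gamma$ acts freely and transitively. The Voronoi tiling $V(X)$ is then a simple tiling: its cells are bounded convex polytopes (boundedness from relative denseness of $X$, finitely many facets from uniform discreteness), any two cells meet in a common face, and $\Gamma$ permutes the cells transitively, so up to isometry there is a single prototile, the Dirichlet domain $D$ of $x_0$. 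In particular $\Gamma \subseteq \mathrm{Aut}(V(X))$.

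To force equality I destroy any extra symmetry by an equivariant decoration. Pass first to a $\Gamma$-equivariant simplicial refinement $T_1$ of $V(X)$ — for instance its barycentric subdivision, which is canonical, hence automatically $\Gamma$-equivariant and facet-to-facet, again a simple tiling with finitely many prototiles up to isometry, and still satisfies $\Gamma \subseteq \mathrm{Aut}(T_1)$. Pick one full-dimensional simplex $P$ of $T_1$ contained in $\mathrm{int}(D)$ and cut a tiny corner off $P$ by a hyperplane $H \subset \mathrm{int}(P)$: this replaces $P$ by two convex pieces, a small simplex $P_\star := P \cap H^-$ and the truncated polytope $P \cap H^+$, meeting facet-to-facet. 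Choose the cut generically, so that $P_\star$ is a scalene simplex with pairwise distinct edge lengths and is not isometric to any of the (finitely many) other prototiles occurring. Now transport this modification of $D$ by the free $\Gamma$-action: in each cell $\gamma D$ cut the corresponding corner off $\gamma P$. Because the cutting hyperplanes lie in cell interiors, no previously existing wall is touched, so the result $T$ is again a simple tiling, facet-to-facet, with finitely many prototiles up to isometry, and by construction $\Gamma \subseteq \mathrm{Aut}(T)$. Moreover the copies of $P_\star$ occurring in $T$ are exactly $\{\gamma(P_\star) : \gamma \in \Gamma\}$ (no other tile of the decorated $D$ is isometric to $P_\star$), and since $P_\star \subset \mathrm{int}(D)$ and $D$ is a fundamental domain these copies are pairwise distinct, so $\Gamma$ acts freely and transitively on the set of copies of $P_\star$.

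It remains to show $\mathrm{Aut}(T) \subseteq \Gamma$. Let $\phi \in \mathrm{Aut}(T)$. Then $\phi(P_\star)$ is a tile of $T$ isometric to $P_\star$, hence of the form $\gamma_0(P_\star)$ for a unique $\gamma_0 \in \Gamma$, so $\gamma_0^{-1}\phi$ maps the set $P_\star$ onto itself. But a scalene full-dimensional simplex (for $n \ge 2$) has trivial isometric symmetry group: an isometry preserving it permutes the vertices preserving all pairwise distances, and distinctness of the edge lengths forces this permutation, and hence the isometry, to be the identity on $\mathbb{E}^n$. Therefore $\gamma_0^{-1}\phi = \mathbbm{1}_{\mathbb{E}^n}$ and $\phi = \gamma_0 \in \Gamma$, which together with the reverse inclusion gives $\mathrm{Aut}(T) = \Gamma$. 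The case $n = 1$ is handled separately and directly: up to conjugacy there are only two crystallographic groups in $\mathbb{E}^1$, realized respectively by the tiling whose fundamental unit interval is subdivided into three subintervals of three distinct lengths in a fixed order, and by the tiling of $\mathbb{E}^1$ by unit intervals with integer endpoints.

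The step requiring care is the second one: the decoration must be simultaneously $\Gamma$-equivariant, globally facet-to-facet — the latent constraint being that the subdivisions induced on $\Gamma$-paired facets of $D$ must correspond under the pairing isometries — rigid enough to pin down every automorphism, and still of finite prototile type. The apparent tension is that equivariance copies the asymmetric marker $P_\star$ into every cell, a priori producing symmetries permuting the markers. This is resolved by two deliberate choices: performing the subdivision as the restriction of a \emph{canonical} equivariant refinement of the whole complex (so the paired-facet compatibility is automatic), and making the cut strictly inside the cells with $P_\star$ a full-dimensional scalene simplex, so that a marker determines its cell and matching one marker onto another already forces the ambient isometry to lie in $\Gamma$. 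The remaining ingredients — genericity of $x_0$, that Voronoi complexes of Delone sets are simple tilings, and $\Gamma$-equivariance of barycentric subdivision — are routine.
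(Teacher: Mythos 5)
Your overall strategy (Voronoi tiling of a free $\Gamma$-orbit, then an equivariant rigidifying subdivision with pairwise distinct lengths) is essentially the paper's, and the stabilizer argument, the Delone/simple Voronoi step and the trivial-symmetry argument for a scalene marker are fine. The genuine gap is in the decoration step: cutting a corner off the simplex $P$ destroys the facet-to-facet property that the paper's definition of a tiling requires. Any hyperplane $H$ meeting $\mathrm{int}(P)$ necessarily meets $\partial P$ and subdivides the facets of $P$ containing the cut-off vertex; the tiles of $T_1$ adjacent to $P$ across those facets are left unchanged, so after the cut such a neighbour $Q$ has $Q \cap P_\star$ equal to a proper subset of a facet of $Q$ --- a face of $P_\star$ but not of $Q$ --- violating condition (2) of the paper's definition of a tiling, so $T$ is not a simple tiling in the sense of the statement. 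Your justification that ``no previously existing wall is touched because the cutting hyperplanes lie in cell interiors'' conflates the walls of the Voronoi cells with the walls of the refined tiling $T_1$: the latter include the internal walls of the simplicial subdivision of each cell, and these are exactly the ones your cut subdivides. (A smaller slip: no full-dimensional simplex of the barycentric subdivision is contained in $\mathrm{int}(D)$, since every maximal simplex has its facet opposite the barycenter of $D$ inside a facet of $D$; one could fix this with a second subdivision, but the facet-to-facet problem above persists regardless.)

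The paper sidesteps this with a different local modification: in each Voronoi cell $\gamma t_x$ it cones over \emph{every} facet from a generic interior point $\gamma y$. This refines the cell without subdividing any pre-existing wall --- each facet of the cell survives as a full facet of exactly one cone, and two cones inside a cell meet along the cone over a common ridge --- so the subdivision is facet-to-facet, and the genericity of $y$ (distances from $y$ to the vertices pairwise distinct and distinct from the edge lengths of $t_x$) plays exactly the role of your scalene simplex in forcing any automorphism to agree with an element of $\Gamma$. If you replace your corner cut by such a cone subdivision (or, more laboriously, propagate compatible subdivisions to all affected neighbours), the remainder of your argument, including the uniqueness of the marker up to the free $\Gamma$-action and the separate $n=1$ discussion, goes through.
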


\noindent For wallpaper groups and space groups, $2$- and $3$-dimensional tilings with one of these groups as automorphism group are often constructed by \textit{decorating} the tiles, as is the case with actual wallpapers. These decorations kill additional automorphisms that the undecorated tiling might have. In the proof of Thm.~\ref{cryst-group-tiling-thm} we present a construction method that kills further automorphisms by subdividing tiles in a sufficiently general way. The tiles thus obtained may not look too regular but they are convex polytopes meeting facet-to-facet, so the tilings are simple. Allowing more general tiles, maybe not meeting facet-to-facet, leads to "nicer" tilings, see the many illustrations in \cite{GS89}.

\noindent Thm.~\ref{MLD-cryst-thm} and \ref{cryst-group-tiling-thm} together with the fact that the hull of a crystallographic tiling $T$ of $\mathbb{E}^n$ is homeomorphic to $\mathrm{Isom}(\mathbb{E}^n)/\mathrm{Aut}(T)$, preserving the natural group action of $\mathrm{Isom}(\mathbb{E}^n)$, yield a complete picture of the theory of crystallographic tilings. But crystallographic tilings can be used as a starting point to construct more general tilings. For example, the authors are preparing a continuation of this paper establishing the cut-and-project method for crystallographic tilings, thus generalizing the classical approach starting from lattice tilings (see the monograph \cite{FHK02}). Keeping in line with the framework in which Thm.~\ref{MLD-cryst-thm} and \ref{cryst-group-tiling-thm} hold the more general notion of MLD equivalence as given in Def.~\ref{LD-def} should be used. So, as when starting from lattice tilings the cut-and-project method using more general crystallographic tilings will yield simple tilings of $\mathbb{E}^n$ whose hulls are foliated, but by $\mathrm{Isom}(\mathbb{E}^n)$-orbits, not $\mathrm{Trans}(\mathbb{E}^n)$-orbits.

\noindent Thus, when studying the dynamics of these foliation we must face the additional difficulty of $\mathrm{Isom}(\mathbb{E}^n)$ being non-commutative, in contrast to $\mathrm{Trans}(\mathbb{E}^n) \cong \mathbb{R}^n$ used in the classical lattice setting. However, the group of isometries of $\mathbb{E}^n$ is well-behaved with respect to the techniques coming from ergodic theory and dynamical systems, for example the use of crossed product $C^\ast$-algebras to describe quotient spaces of group actions on a topological space.

\noindent So the hope is that using $\mathrm{Isom}(\mathbb{E}^n)$ instead of $\mathrm{Trans}(\mathbb{E}^n)$ opens up a new field of investigations on (the dynamics of) cut-and-project tilings, and possibly beyond.

\section{Preliminaries}

\subsection{Crystallographic Groups} \label{cryst-groups-ssec} There are numerous accounts of the theory of crystallographic groups; we follow the presentation of Farkas \cite{Far81}. In this subsection we collect definitions and statements used later on to fix the widely varying notation:

\noindent Let $\mathbb{E}^n$ denote the set of all points $(x_1, \ldots, x_n) \in \mathbb{R}^n$. \textit{Translations} of $\mathbb{E}^n$ are considered as permutations of the points in $\mathbb{E}^n$. In particular, for every two points $P = (p_1, \ldots, p_n), Q = (q_1, \ldots, q_n) \in \mathbb{E}^n$ there is a unique translation $\overrightarrow{PQ}$ mapping $(x_1, \ldots, x_n)$ to
\[ (x_1 + q_1 - p_1, \ldots, x_n+q_n - p_n). \]
Vice versa, for every point $P \in \mathbb{E}^n$ a translation $\tau$ is equal to $\overrightarrow{P\tau(P)}$. The set $\mathrm{Trans}(\mathbb{E}^n)$ of all translations has the natural structure of an $n$-dimensional vector space, where the addition is just composition of translations. For every point $P \in \mathbb{E}^n$ the natural bijective map $\mathrm{Trans}(\mathbb{E}^n) \rightarrow \mathbb{E}^n$ given by $\tau \mapsto \tau(P)$ induces a vector space structure on $\mathbb{E}^n$, and the resulting vector space is denoted by $\mathbb{E}^n_P$.

\noindent Using the same bijective map, an inner product and the corresponding norm $\parallel . \parallel$ on the vector space $\mathrm{Trans}(\mathbb{E}^n)$ induce an inner product and a norm on $\mathbb{E}^n_P$ and thus a distance on $\mathbb{E}^n$, by setting
\[ d(Q,Q^\prime) = \norm{\overrightarrow{QQ^\prime}},\ \mathrm{for\ points\ } Q, Q^\prime \in \mathbb{E}^n. \]
Later on, we will denote the ball with center in $x \in \mathbb{E}^n$ and radius $R > 0$ with respect to this distance by $B_R(x)$.

\noindent Given a point $O \in \mathbb{E}^n$ every permutation of $\mathbb{E}^n$ preserving the distance can be written as a product of a translation with a linear transformation in the orthogonal group $O(\mathbb{E}^n_O)$. The group of all such \textit{isometries} is denoted by $\mathrm{Isom}(\mathbb{E}^n)$.

\begin{thm}
$\mathrm{Trans}(\mathbb{E}^n)$ is a normal subgroup of $\mathrm{Isom}(\mathbb{E}^n)$, and $\mathrm{Trans}(\mathbb{E}^n) \cap O(\mathbb{E}^n_O) = \{\mathbf{1}_{\mathbb{E}^n}\}$. In particular, $\mathrm{Isom}(\mathbb{E}^n)$ is isomorphic to the semidirect product $\mathrm{Trans}(\mathbb{E}^n) \rtimes O(\mathbb{E}^n_O)$.
\end{thm}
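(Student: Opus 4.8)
The plan is to build the semidirect product structure around the "linear part" map and to read off normality and triviality of the intersection from it. First I would fix the point $O$ and recall from the preceding paragraph that every isometry $\gamma$ can be written as $\gamma = \tau \circ A$ with $\tau \in \mathrm{Trans}(\mathbb{E}^n)$ and $A \in O(\mathbb{E}^n_O)$. The first point to settle is that this decomposition is \emph{unique}: if $\tau_1 A_1 = \tau_2 A_2$, then $\tau_2^{-1}\tau_1 = A_2 A_1^{-1}$ is simultaneously a translation and a permutation fixing $O$; since a translation with a fixed point is the identity, both sides equal $\mathbf{1}_{\mathbb{E}^n}$. This allows one to define a map $\pi \colon \mathrm{Isom}(\mathbb{E}^n) \to O(\mathbb{E}^n_O)$ by $\pi(\tau A) = A$.

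The technical heart of the argument is the observation that $O(\mathbb{E}^n_O)$ normalizes $\mathrm{Trans}(\mathbb{E}^n)$: for $A \in O(\mathbb{E}^n_O)$ and the translation $\tau_v$ by a vector $v \in \mathbb{E}^n_O$, a one-line computation in the vector space $\mathbb{E}^n_O$ gives $A\tau_v A^{-1}(x) = A(A^{-1}x + v) = x + Av$, so $A\tau_v A^{-1} = \tau_{Av}$ is again a translation. From this I would deduce that $\pi$ is a group homomorphism: writing $\gamma_i = \tau_i A_i$, one has $\gamma_1\gamma_2 = \tau_1\,(A_1\tau_2 A_1^{-1})\,A_1 A_2$, and since $A_1\tau_2 A_1^{-1}$ is a translation, the product of the first two factors is a translation, so by uniqueness $\pi(\gamma_1\gamma_2) = A_1 A_2 = \pi(\gamma_1)\pi(\gamma_2)$. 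The kernel of $\pi$ is exactly $\mathrm{Trans}(\mathbb{E}^n)$ — the isometries with trivial linear part — which is therefore a normal subgroup of $\mathrm{Isom}(\mathbb{E}^n)$.

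For the intersection, note that every element of $O(\mathbb{E}^n_O)$ fixes $O$; if such an element were also a translation, it would be a translation with a fixed point, hence $\mathbf{1}_{\mathbb{E}^n}$, so $\mathrm{Trans}(\mathbb{E}^n) \cap O(\mathbb{E}^n_O) = \{\mathbf{1}_{\mathbb{E}^n}\}$. Finally, the three facts now in hand — $\mathrm{Trans}(\mathbb{E}^n)$ normal, $\mathrm{Trans}(\mathbb{E}^n) \cap O(\mathbb{E}^n_O)$ trivial, and $\mathrm{Isom}(\mathbb{E}^n) = \mathrm{Trans}(\mathbb{E}^n)\cdot O(\mathbb{E}^n_O)$ (the existence part of the decomposition) — are precisely the hypotheses of the standard recognition criterion for internal semidirect products, and they yield $\mathrm{Isom}(\mathbb{E}^n) \cong \mathrm{Trans}(\mathbb{E}^n) \rtimes O(\mathbb{E}^n_O)$, with $O(\mathbb{E}^n_O)$ acting on $\mathrm{Trans}(\mathbb{E}^n) \cong \mathbb{E}^n_O$ by $A\cdot v = Av$. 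I do not expect a genuine obstacle; the only place needing care is the well-definedness and homomorphism property of $\pi$, which both rest on the elementary fact that a translation fixing a point is trivial, together with the conjugation computation above.
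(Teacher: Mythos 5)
Your proposal is correct and follows essentially the same route as the paper: the paper declares the result well known and only records the conjugation formula $\alpha\cdot\tau\cdot\alpha^{-1}=\overrightarrow{O\alpha(\tau(O))}$, which is exactly the computation $A\tau_vA^{-1}=\tau_{Av}$ at the heart of your argument. Your write-up simply fills in the standard details (uniqueness of the decomposition, the homomorphism onto $O(\mathbb{E}^n_O)$, and the internal semidirect product criterion) that the paper leaves implicit.
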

\begin{proof}
This is well known. We just note the formula
\begin{equation} \label{conj-trans-form}
\alpha \cdot \tau \cdot \alpha^{-1} = \overrightarrow{O\alpha(\tau(O))}
\end{equation}
for a translation $\tau$ and an orthogonal map $\alpha \in O(\mathbb{E}^n_O)$: Via the isomorphism $\mathrm{Trans}(\mathbb{E}^n) \rightarrow \mathbb{E}^n_O$ the point $\alpha \cdot \tau \cdot \alpha^{-1}(P)$ corresponds to
\[ \alpha \left( \alpha^{-1}(\overrightarrow{OP}) + \overrightarrow{O\tau(O)} \right) = \overrightarrow{OP} + \overrightarrow{O\alpha(\tau(O))} \]
corresponding to $\overrightarrow{O\alpha(\tau(O))}(P)$.
\end{proof}

\noindent Since $\mathrm{Isom}(\mathbb{E}^n)$ is a semidirect product of $\mathrm{Trans}(\mathbb{E}^n)$ and  $O(\mathbb{E}^n_O)$ we can define a distance on $\mathrm{Isom}(\mathbb{E}^n)$ by using the (Euclidean) metric $\norm{\tau}_{\mathrm{Eucl}}$ on translations $\tau$ introduced above, and the operator norm on $\mathrm{End}(\mathbb{E}^n_0)$, the vector space of not necessarily invertible linear transformations of $\mathbb{E}^n_0$, given by
\[ \norm{\alpha}_{\mathrm{op}} := \max \left\{\norm{\overrightarrow{O\alpha(P)}}_{\mathrm{Eucl}}: \norm{\overrightarrow{OP}}_{\mathrm{Eucl}} \leq 1\right\} \]
for $\alpha \in \mathrm{End}(\mathbb{E}^n_0)$: For products $\tau_O \cdot \alpha_O, \sigma_O \cdot \beta_O \in \mathrm{Isom}(\mathbb{E}^n)$, with $\tau_O, \sigma_O \in \mathrm{Trans}(\mathbb{E}^n)$ and $\alpha_O, \beta_O \in O(\mathbb{E}^n_O)$, we can set
\[ d_O(\tau_O \cdot \alpha_O, \sigma_O \cdot \beta_O) := \norm{\tau_O - \sigma_O}_{\mathrm{Eucl}} + \norm{\alpha_O - \beta_O}_{\mathrm{op}} \]
because the product decomposition is unique.

\noindent For later purposes we state some properties of the operator norm $\norm{\cdot}_{\mathrm{op}}$ on $\mathrm{End}(\mathbb{E}^n_0)$ and the distance $d_O$ on $\mathbb{E}^n$:
\begin{lem} \label{d_O-lem}
In the following, $\alpha, \beta$ are orthogonal maps on $\mathbb{E}^n_O$, $\sigma, \tau$ are translations of $\mathbb{E}^n$, $\chi, \phi, \psi$ are isometries of $\mathbb{E}^n$, and $\mathbbm{1}_{\mathbb{E}^n}$ is the identity map on $\mathbb{E}^n$. Then:
\begin{enumerate}
\item $\norm{\alpha}_{\mathrm{op}} = 1$.
\item $\norm{\alpha \cdot \beta}_{\mathrm{op}} \leq \norm{\alpha}_{\mathrm{op}} \cdot \norm{\beta}_{\mathrm{op}}$ if $\alpha, \beta \in \mathrm{End}(\mathbb{E}^n_0)$, and equality holds if $\alpha \in O(\mathbb{E}^n_O)$.
\item $\norm{\alpha\beta\alpha^{-1} - \mathbbm{1}_{\mathbb{E}^n}}_{\mathrm{op}} =
           \norm{\beta - \mathbbm{1}_{\mathbb{E}^n}}_{\mathrm{op}}$.
\item $\norm{\alpha\beta - \mathbbm{1}_{\mathbb{E}^n}}_{\mathrm{op}} \leq
           \norm{\alpha - \mathbbm{1}_{\mathbb{E}^n}}_{\mathrm{op}} +
           \norm{\beta - \mathbbm{1}_{\mathbb{E}^n}}_{\mathrm{op}}$.
\item $d_O( \alpha \cdot \sigma, \mathbbm{1}_{\mathbb{E}^n} ) = d_O( \sigma \cdot \alpha, \mathbbm{1}_{\mathbb{E}^n} )$.
\item \label{inv-met-for}$d_O(\chi ^{-1}, \mathbbm{1}_{\mathbb{E}^n} )  = d_O( \chi  , \mathbbm{1}_{\mathbb{E}^n} )$.
\item \label{prod-id-for}$d_O( \chi \cdot \phi, \mathbbm{1}_{\mathbb{E}^n}) \leq d_O( \chi , \mathbbm{1}_{\mathbb{E}^n} )+
                                                                                                     d_O(\phi , \mathbbm{1}_{\mathbb{E}^n} )$.
\item $d_O(\sigma,\tau)= \norm{\sigma -\tau}_{Eucl}$.
\item \label{conj-met-for} $d_{\sigma_O(O)}(\chi \phi \chi^{-1},\chi \psi\chi^{-1})=d_ O(\phi,\psi)$ if $\chi = \sigma_O \cdot \alpha_O$ with $\sigma_O \in \mathrm{Trans}(\mathbb{E}^n)$ and $\alpha_O \in O(\mathbb{E}^n_O)$.
\item \label{met-comp-for} $d_{\sigma(O)}(\phi,\psi) \leq (1 + \norm{\sigma}_{\mathrm{Eucl}}) d_O(\phi, \psi)$.
\item \label{met-conj-bd-for} $d_O(\chi \phi \chi ^{-1},\mathbbm{1}_{\mathbb{E}^n} ) \leq
           d_O(\phi, \mathbbm{1}_{\mathbb{E}^n})\left(1+d_O(\chi, \mathbbm{1}_{\mathbb{E}^n})\right)$.
\item \label{met-prod0-for} $d_O(\phi \chi, \phi) = d_O(\chi, \mathbbm{1}_{\mathbb{E}^n})$.
\item \label{met-prod-for} $d_O(\chi \phi, \phi) \leq d_O(\chi, \mathbbm{1}_{\mathbb{E}^n}) \cdot
                                            (1 + d_O(\phi, \mathbbm{1}_{\mathbb{E}^n}))$.
\item \label{met-prod2-for} $d_O(\chi, \mathbbm{1}_{\mathbb{E}^n}) \leq d_O(\chi \phi, \phi) \cdot
                                            (1 + d_O(\phi, \mathbbm{1}_{\mathbb{E}^n}))$.
\end{enumerate}
\end{lem}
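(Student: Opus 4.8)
The plan is to prove the items in an order that makes later ones depend on earlier ones, since several of the distance estimates reduce to the operator-norm facts (1)--(4) together with the explicit description of $d_O$ on the semidirect product. I would begin with (1), which is immediate from orthogonality: $\norm{\overrightarrow{O\alpha(P)}}_{\mathrm{Eucl}} = \norm{\overrightarrow{OP}}_{\mathrm{Eucl}}$ for all $P$, so the maximum over the unit ball is exactly $1$. Item (2) is the standard submultiplicativity of the operator norm, with equality when $\alpha$ is orthogonal because then $\norm{\alpha\beta(v)} = \norm{\beta(v)}$ pointwise; (3) follows by writing $\alpha\beta\alpha^{-1} - \mathbbm{1} = \alpha(\beta - \mathbbm{1})\alpha^{-1}$ and applying (2) in both directions (using that $\alpha, \alpha^{-1}$ are orthogonal). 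Item (4) is $\alpha\beta - \mathbbm{1} = \alpha(\beta - \mathbbm{1}) + (\alpha - \mathbbm{1})$ combined with (1)--(2).

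Next I would turn to the translation-side facts. Item (8) is just the definition of $d_O$: the orthogonal parts of $\sigma$ and $\tau$ are both $\mathbbm{1}$, so $d_O(\sigma,\tau) = \norm{\sigma - \tau}_{\mathrm{Eucl}}$. For (5), write $\alpha\cdot\sigma$ in the normal form $\mathrm{Trans}\rtimes O$: by the conjugation formula \eqref{conj-trans-form}, $\alpha\cdot\sigma = (\alpha\sigma\alpha^{-1})\cdot\alpha = \overrightarrow{O\alpha(\sigma(O))}\cdot\alpha$, so $d_O(\alpha\sigma,\mathbbm{1}) = \norm{\overrightarrow{O\alpha(\sigma(O))}}_{\mathrm{Eucl}} + \norm{\alpha - \mathbbm{1}}_{\mathrm{op}} = \norm{\sigma}_{\mathrm{Eucl}} + \norm{\alpha - \mathbbm{1}}_{\mathrm{op}} = d_O(\sigma\alpha,\mathbbm{1})$, since $\alpha$ is an isometry. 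For (6), with $\chi = \tau\cdot\alpha$ one has $\chi^{-1} = \alpha^{-1}\tau^{-1} = \alpha^{-1}\cdot(\text{translation})$, and applying (5) reduces $d_O(\chi^{-1},\mathbbm{1})$ to $\norm{\alpha^{-1}(\tau^{-1}(O)) - O} + \norm{\alpha^{-1} - \mathbbm{1}}_{\mathrm{op}}$, which equals $\norm{\tau(O) - O} + \norm{\alpha - \mathbbm{1}}_{\mathrm{op}} = d_O(\chi,\mathbbm{1})$ using $\norm{\alpha^{-1}-\mathbbm{1}}_{\mathrm{op}} = \norm{\mathbbm{1}-\alpha}_{\mathrm{op}}$ and that $\alpha$ preserves lengths. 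Item (7) is the triangle inequality: multiply out $\chi\phi$ in normal form, estimate the translation part by the Euclidean triangle inequality (splitting off the $\norm{\sigma}_{\mathrm{Eucl}}$ term, whose rotated version has the same norm) and the orthogonal part by (4).

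For (9), with $\chi = \sigma_O\cdot\alpha_O$ the point $\sigma_O(O)$ is the new origin; a direct computation conjugating $\phi,\psi$ and re-expressing everything with respect to $\sigma_O(O)$ shows the translation parts of $\chi\phi\chi^{-1}$ and $\chi\psi\chi^{-1}$ differ (in the $\mathbb{E}^n_{\sigma_O(O)}$ coordinates) by $\alpha_O$ applied to the difference of the translation parts of $\phi,\psi$, hence with unchanged Euclidean norm, while the orthogonal parts differ by $\alpha_O(\cdot)\alpha_O^{-1}$, with unchanged operator norm by (3); so $d_{\sigma_O(O)}(\chi\phi\chi^{-1},\chi\psi\chi^{-1}) = d_O(\phi,\psi)$. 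Item (10) compares the metric at two different basepoints: changing the origin from $O$ to $\sigma(O)$ changes the translation part of $\phi$ (resp. $\psi$) by a correction term involving $(\alpha_\phi - \mathbbm{1})(\sigma)$, bounded in norm by $\norm{\alpha_\phi - \mathbbm{1}}_{\mathrm{op}}\norm{\sigma}_{\mathrm{Eucl}}$; feeding this into $d_{\sigma(O)}(\phi,\psi)$ and using $\norm{\alpha_\phi - \mathbbm{1}}_{\mathrm{op}},\norm{\alpha_\psi-\mathbbm{1}}_{\mathrm{op}} \le d_O(\phi,\psi)$ gives the factor $(1 + \norm{\sigma}_{\mathrm{Eucl}})$. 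Then (11) follows by combining (9) (with $\psi = \mathbbm{1}$) and (10): $d_O(\chi\phi\chi^{-1},\mathbbm{1}) \le (1 + \norm{\sigma_O}_{\mathrm{Eucl}})\, d_{\sigma_O(O)}(\chi\phi\chi^{-1},\mathbbm{1}) = (1+\norm{\sigma_O}_{\mathrm{Eucl}})\,d_O(\phi,\mathbbm{1})$, and since $\norm{\sigma_O}_{\mathrm{Eucl}} \le d_O(\chi,\mathbbm{1})$ this is at most $d_O(\phi,\mathbbm{1})(1 + d_O(\chi,\mathbbm{1}))$.

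Finally, the left/right-multiplication estimates. Item (12) is $d_O(\phi\chi,\phi) = d_O(\chi,\mathbbm{1})$: left-translating by $\phi^{-1}$ — wait, that is not an isometry of the metric space $(\mathrm{Isom}(\mathbb{E}^n),d_O)$ in general, so instead I compute directly. Writing $\phi = \tau\cdot\alpha$, $\chi = \sigma\cdot\beta$, the product $\phi\chi = (\tau\cdot\alpha\sigma\alpha^{-1})(\alpha\beta)$ in normal form has translation part $\tau + \alpha(\sigma(O)-O)$ and orthogonal part $\alpha\beta$; comparing with $\phi$ (translation part $\tau$, orthogonal part $\alpha$) gives translation difference $\alpha(\sigma(O)-O)$ of norm $\norm{\sigma}_{\mathrm{Eucl}}$ and orthogonal difference $\alpha\beta - \alpha = \alpha(\beta - \mathbbm{1})$ of operator norm $\norm{\beta - \mathbbm{1}}_{\mathrm{op}}$, so indeed $d_O(\phi\chi,\phi) = \norm{\sigma}_{\mathrm{Eucl}} + \norm{\beta-\mathbbm{1}}_{\mathrm{op}} = d_O(\chi,\mathbbm{1})$. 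For (13), $d_O(\chi\phi,\phi) = d_O(\chi\phi\phi^{-1}\cdot\phi\cdot\phi^{-1},\ldots)$; cleaner is to use the identity $\chi\phi = \phi\cdot(\phi^{-1}\chi\phi)$, so by (12) $d_O(\chi\phi,\phi) = d_O(\phi^{-1}\chi\phi,\mathbbm{1})$, and then (11) (with the roles of the isometry being conjugated and the conjugator as indicated) bounds this by $d_O(\chi,\mathbbm{1})(1 + d_O(\phi^{-1},\mathbbm{1})) = d_O(\chi,\mathbbm{1})(1 + d_O(\phi,\mathbbm{1}))$ using (6). Item (14) is the reverse inequality: apply (13) with $\chi\phi$ in place of $\chi$ and $\phi^{-1}$ in place of $\phi$, giving $d_O(\chi\phi\phi^{-1},\phi^{-1}) \le d_O(\chi\phi,\mathbbm{1})(1 + d_O(\phi^{-1},\mathbbm{1}))$; but $d_O(\chi\phi\phi^{-1},\phi^{-1}) = d_O(\chi,\phi^{-1})$, which is not quite what is wanted, so more carefully: by (12), $d_O(\chi,\mathbbm{1}) = d_O(\chi\phi,\phi\cdot?)$ — I will instead derive (14) by setting $\eta := \chi\phi$, so $\chi = \eta\phi^{-1}$ and $d_O(\chi,\mathbbm{1}) = d_O(\eta\phi^{-1},\mathbbm{1})$; writing $\eta\phi^{-1} = \phi^{-1}(\phi\eta\phi^{-1})$ and applying (12) then (11) and (6) yields $d_O(\chi,\mathbbm{1}) = d_O(\phi\eta\phi^{-1},\mathbbm{1}) \le d_O(\eta,\mathbbm{1})(1 + d_O(\phi,\mathbbm{1})) = d_O(\chi\phi,\phi)(1 + d_O(\phi,\mathbbm{1}))$ after identifying $d_O(\eta,\mathbbm{1})$ with $d_O(\chi\phi,\phi)$ via (12) again.

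The main obstacle I anticipate is bookkeeping rather than conceptual: every computation hinges on correctly re-expressing a product $\tau\cdot\alpha$ in the \emph{unique} normal form $\mathrm{Trans}\rtimes O$ after conjugation or a change of basepoint, and on tracking how the translation vector transforms (it gets hit by an orthogonal map, so its Euclidean norm is preserved — this is the recurring mechanism) versus how the orthogonal part transforms (it gets conjugated, so its "distance to $\mathbbm{1}$" is preserved by (3)). Getting the basepoint-change formula underlying (10)--(11) exactly right — in particular isolating the term $(\alpha - \mathbbm{1})(\sigma)$ and bounding it by $\norm{\alpha-\mathbbm{1}}_{\mathrm{op}}\norm{\sigma}_{\mathrm{Eucl}} \le d_O(\phi,\mathbbm{1})\norm{\sigma}_{\mathrm{Eucl}}$ — is the one place where a sign or a composition order error would propagate, so I would write that computation out in full and derive (11)--(14) formally from (9), (10), (12) and the conjugation formula \eqref{conj-trans-form} without further coordinate manipulation.
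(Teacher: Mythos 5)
Your overall strategy is the same as the paper's: reduce everything to the unique normal form in $\mathrm{Trans}(\mathbb{E}^n)\rtimes O(\mathbb{E}^n_O)$ via the conjugation formula (\ref{conj-trans-form}), prove the operator-norm facts (1)--(4) first, and then track how translation parts get hit by orthogonal maps (norm preserved) while orthogonal parts get conjugated (distance to $\mathbbm{1}$ preserved by (3)). Items (1)--(9) and (12) match the paper's computations, and your derivation of (13) from (12), (11) and (6) via $\chi\phi=\phi(\phi^{-1}\chi\phi)$ is in fact a slightly cleaner route than the paper's chain through (9) and (10). But two of your steps fail as written.

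In (10) you bound the two basepoint-change corrections $(\alpha_\phi-\mathbbm{1})(\sigma)$ and $(\alpha_\psi-\mathbbm{1})(\sigma)$ separately and then invoke $\norm{\alpha_\phi-\mathbbm{1}}_{\mathrm{op}}\le d_O(\phi,\psi)$ and $\norm{\alpha_\psi-\mathbbm{1}}_{\mathrm{op}}\le d_O(\phi,\psi)$. These inequalities are false: for $\phi=\psi$ a nontrivial rotation, $d_O(\phi,\psi)=0$ while $\norm{\alpha_\phi-\mathbbm{1}}_{\mathrm{op}}>0$. The correct move (and in substance what the paper does after rewriting $d_{\sigma(O)}(\phi,\psi)=d_O(\sigma^{-1}\phi\sigma,\sigma^{-1}\psi\sigma)$ using (9)) is to subtract the two corrected translation parts before estimating: the correction to the \emph{difference} is $(\alpha_\phi-\alpha_\psi)(\sigma)$, with norm at most $\norm{\alpha_\phi-\alpha_\psi}_{\mathrm{op}}\norm{\sigma}_{\mathrm{Eucl}}\le d_O(\phi,\psi)\,\norm{\sigma}_{\mathrm{Eucl}}$, which gives the factor $(1+\norm{\sigma}_{\mathrm{Eucl}})$. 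Since your (11), (13), (14) all rest on (10), this must be repaired, though the repair stays entirely inside your own computation.

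Your chain for (14) also contains false identities. With $\eta=\chi\phi$ you assert $d_O(\chi,\mathbbm{1})=d_O(\phi\eta\phi^{-1},\mathbbm{1})$ and $d_O(\eta,\mathbbm{1})=d_O(\chi\phi,\phi)$, both attributed to (12); neither is an instance of (12), which only yields $d_O(\phi A,\phi)=d_O(A,\mathbbm{1})$, and both fail already for $\chi=\mathbbm{1}$, $\phi\ne\mathbbm{1}$ (then $\phi\eta\phi^{-1}=\phi$, so the first claims $0=d_O(\phi,\mathbbm{1})$, and the second claims $d_O(\phi,\mathbbm{1})=d_O(\phi,\phi)=0$). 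The inequality (14) itself is fine, and the fix is the mirror image of your argument for (13): by (11) applied with conjugator $\phi$, $d_O(\chi,\mathbbm{1})=d_O\bigl(\phi(\phi^{-1}\chi\phi)\phi^{-1},\mathbbm{1}\bigr)\le d_O(\phi^{-1}\chi\phi,\mathbbm{1})\bigl(1+d_O(\phi,\mathbbm{1})\bigr)$, and by (12), $d_O(\phi^{-1}\chi\phi,\mathbbm{1})=d_O(\phi\cdot\phi^{-1}\chi\phi,\phi)=d_O(\chi\phi,\phi)$. (The paper instead deduces (14) from (12), (9), (10) and (6).)
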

\begin{proof}
(1) and (2) are obvious from the definitions. For (3), use
\[ \alpha\beta\alpha^{-1} - \mathbbm{1}_{\mathbb{E}^n} =
    \alpha \cdot (\beta - \mathbbm{1}_{\mathbb{E}^n}) \cdot \alpha^{-1} \]
and (2). For (4), expand
\[ \alpha\beta - \mathbbm{1}_{\mathbb{E}^n} = \alpha\beta - \alpha + \alpha - \mathbbm{1}_{\mathbb{E}^n} =
    \alpha(\beta - \mathbbm{1}_{\mathbb{E}^n}) + \alpha - \mathbbm{1}_{\mathbb{E}^n} \]
and use the triangle inequality of the operator norm, (2) and (1).

\noindent (5) follows from $\alpha \cdot \sigma = \overrightarrow{O\alpha(\sigma(O))} \cdot \alpha$, using formula~(\ref{conj-trans-form}), and $\norm{\overrightarrow{O\alpha(\sigma(O))}}_{\mathrm{Eucl}} = \norm{\sigma}_{\mathrm{Eucl}}$ since $\alpha$ is orthogonal with center in $O$.

\noindent For (6) we write $\chi = \sigma \cdot \alpha$ as a product of a translation and an orthogonal map. Then $\chi^{-1} = \alpha^{-1} \sigma^{-1}$, and we use (5) together with $\norm{\sigma^{-1}}_{\mathrm{Eucl}} = \norm{-\sigma}_{\mathrm{Eucl}} = \norm{\sigma}_{\mathrm{Eucl}}$ and
\[ \norm{\alpha^{-1} - \mathbbm{1}_{\mathbb{E}^n}}_{\mathrm{op}} = \norm{\alpha^{-1}}_{\mathrm{op}} \cdot
   \norm{\mathbbm{1}_{\mathbb{E}^n} - \alpha}_{\mathrm{op}} =
   \norm{\alpha - \mathbbm{1}_{\mathbb{E}^n}}_{\mathrm{op}}. \]
If $\chi = \sigma \cdot \alpha$ and $\psi = \tau \cdot \beta$ as products of a translation and an orthogonal map then by~(\ref{conj-trans-form}), $\chi\phi = \sigma\overrightarrow{O\alpha(\tau(O))} \cdot \alpha\beta$.
Hence, (7) is implied by the triangle inequality of the norm $\norm{\cdot}_{\mathrm{Eucl}}$, (4)  and
\[ \norm{\overrightarrow{O\alpha(\tau(O))}}_{\mathrm{Eucl}} = \norm{\tau}_{\mathrm{Eucl}}. \]
(8) is obvious. For (9) and (\ref{met-comp-for}) let $\phi = \tau_1\beta_1$ and $\psi = \tau_2\beta_2$ be the product decomposition into translations and orthogonal maps centered in $O$.

\noindent To show (9) we assume first that $\chi = \sigma$ is a translation. Then
\begin{eqnarray*}
d_{\sigma(O)}(\sigma \phi \sigma^{-1},\sigma \psi\sigma^{-1}) & = &
d_{\sigma(O)}(\tau_1 \sigma \beta_1 \sigma^{-1}, \tau_2 \sigma \beta_2 \sigma^{-1}) = \\
 & = & \norm{\tau_1 - \tau_2}_{\mathrm{Eucl}} + \norm{\sigma \beta_1 \sigma^{-1} - \sigma \beta_2 \sigma^{-1}}_{\mathrm{op}} = \\
 & = & \norm{\tau_1 - \tau_2}_{\mathrm{Eucl}} + \norm{\beta_1 - \beta_2}_{\mathrm{op}} = d_O(\phi, \psi),
\end{eqnarray*}
since $\sigma \beta_1 \sigma^{-1}, \sigma \beta_2 \sigma^{-1}$ are orthogonal maps in $O(\mathbb{E}^n_{\sigma(O)})$ conjugated to $\beta_1, \beta_2$.

\noindent So we can reduce (9) to the case when $\chi \in O(\mathbb{E}^n_O)$. Then
\begin{eqnarray*}
d_O(\chi\phi\chi^{-1}, \chi\psi\chi^{-1}) & = & d_O(\chi\tau_1\beta_1\chi^{-1}, \chi\tau_2\beta_2\chi^{-1}) = \\
 & = & d_O(\overrightarrow{O\chi(\tau_1(O))}\chi\beta_1\chi^{-1}, \overrightarrow{O\chi(\tau_2(O))}\chi\beta_2\chi^{-1}) = \\
 & = & \norm{\overrightarrow{O\chi(\tau_1(O))} - \overrightarrow{O\chi(\tau_2(O))}}_{\mathrm{Eucl}} +
           \norm{\chi\beta_1\chi^{-1} - \chi\beta_2\chi^{-1}}_{\mathrm{op}} = \\
 & = &  \norm{\tau_1 - \tau_2}_{\mathrm{Eucl}} + \norm{\beta_1 - \beta_2}_{\mathrm{op}} = d_O(\phi, \psi).
\end{eqnarray*}
(9) implies $d_{\sigma(O)}(\phi, \psi) = d_O(\sigma^{-1}\phi\sigma, \sigma^{-1}\psi\sigma)$, hence (\ref{met-comp-for}) follows from
\begin{eqnarray*}
d_O(\sigma^{-1}\tau_1\beta_1\sigma, \sigma^{-1}\tau_2\beta_2\sigma) & = &
d_O(\sigma^{-1}\tau_1\overrightarrow{O\beta_1(\sigma(O))}\beta_1, \sigma^{-1}\tau_2\overrightarrow{O\beta_2(\sigma(O))}\beta_2) \leq \\
 & \leq & \norm{\tau_1 - \tau_2}_{\mathrm{Eucl}} + \norm{\overrightarrow{O\beta_1(\sigma(O))} - \overrightarrow{O\beta_2(\sigma(O))}}_{\mathrm{Eucl}} + \norm{\beta_1 - \beta_2}_{\mathrm{op}} \leq \\
 & \leq & \norm{\tau_1 - \tau_2}_{\mathrm{Eucl}} + \norm{\beta_1 - \beta_2}_{\mathrm{op}} \cdot \norm{\sigma}_{\mathrm{Eucl}}
           + \norm{\beta_1 - \beta_2}_{\mathrm{op}} \leq \\
 & \leq & (1 + \norm{\sigma}_{\mathrm{Eucl}})d_O(\phi, \psi).
\end{eqnarray*}
If $\chi = \sigma \cdot \alpha$ with $\sigma \in \mathrm{Trans}(\mathbb{E}^n)$ and $\alpha \in O(\mathbb{E}^n_O)$ then (9) also implies that $d_O(\chi\phi\chi^{-1}, \mathbbm{1}_{\mathbb{E}^n}) = d_{\sigma^{-1}(O)}(\phi, \mathbbm{1}_{\mathbb{E}^n})$, and by (\ref{met-comp-for}) this is $\leq (1 + \norm{\sigma^{-1}}_{\mathrm{Eucl}})d_O(\phi, \mathbbm{1}_{\mathbb{E}^n})$. Then (11) follows from $\norm{\sigma^{-1}}_{\mathrm{Eucl}} = \norm{\sigma}_{\mathrm{Eucl}} \leq d_O(\chi, \mathbbm{1}_{\mathbb{E}^n})$.

\noindent For (\ref{met-prod0-for}) write $\phi = \sigma \alpha$, $\chi = \tau \beta$,
$\sigma, \tau \in \mathrm{Trans}(\mathbb{E}^n)$, $\alpha, \beta \in O(\mathbb{E}^n_O)$. By (\ref{conj-trans-form}) we know that $\phi \chi = \sigma \alpha \tau \beta = \sigma \overrightarrow{O\alpha(\tau(O))} \alpha \beta$. Consequently,
\begin{eqnarray*}
d_O(\phi \chi, \phi) & = & \norm{\sigma + \overrightarrow{O\alpha(\tau(O))} - \sigma}_{\mathrm{Eucl}} + \norm{\alpha \beta - \alpha}_{\mathrm{op}} = \norm{\tau}_{\mathrm{Eucl}} + \norm{\beta - \mathbbm{1}_{\mathbb{E}^n}}_{\mathrm{op}} \\
 & = & d_O(\chi, \mathbbm{1}_{\mathbb{E}^n}).
\end{eqnarray*}
From (\ref{met-prod0-for}) we can deduce (\ref{met-prod-for}) and (\ref{met-prod2-for}):
\begin{eqnarray*}
d_O(\chi \phi, \phi) & = & d_O(\phi^{-1}\phi\chi\phi, \phi^{-1}\phi\phi) \stackrel{(\ref{conj-met-for})}{=}
                                         d_{\phi(O)}(\phi \chi, \phi) \\
 & \stackrel{(\ref{met-comp-for})}{\leq} & (1 + d_O(\phi, \mathbbm{1}_{\mathbb{E}^n})) \cdot d_O(\phi \chi, \phi)
    \stackrel{(\ref{met-prod0-for})}{=} (1 + d_O(\phi, \mathbbm{1}_{\mathbb{E}^n})) \cdot
                                                              d_O(\chi, \mathbbm{1}_{\mathbb{E}^n})
\end{eqnarray*}
and
\begin{eqnarray*}
d_O(\chi, \mathbbm{1}_{\mathbb{E}^n}) & \stackrel{(\ref{met-prod0-for})}{=} & d_O(\phi \chi, \phi)
                \stackrel{(\ref{conj-met-for})}{=} d_{\phi^{-1}(O)}(\phi^{-1}\phi\chi\phi, \phi^{-1}\phi\phi) \\
 & \stackrel{(\ref{met-comp-for}), (\ref{inv-met-for})}{\leq} & d_O(\chi \phi, \phi) \cdot
                                            (1 + d_O(\phi, \mathbbm{1}_{\mathbb{E}^n})).
\end{eqnarray*}
\end{proof}

\begin{cor} \label{cont-conj-mult-cor}
The topology on $\mathrm{Isom}(\mathbb{E}^n)$ induced by the metric $d_O$ makes conjugation and multiplication from left or right by an element $\gamma \in \mathrm{Isom}(\mathbb{E}^n)$ continuous.
\end{cor}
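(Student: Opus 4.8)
\emph{Proof proposal.}
The plan is first to show that left and right multiplication by $\gamma$ are locally Lipschitz for $d_O$, and then to deduce continuity of conjugation $\phi \mapsto \gamma\phi\gamma^{-1}$ by writing it as right multiplication by $\gamma^{-1}$ followed by left multiplication by $\gamma$. Throughout I will use the quantitative estimates of Lemma~\ref{d_O-lem}; the whole point is that $d_O$ is only a metric, not left- or right-invariant, so one cannot simply claim $d_O(\gamma\phi,\gamma\psi)=d_O(\phi,\psi)$, and must instead carry along the correction factors $1+d_O(\cdot,\mathbbm{1}_{\mathbb{E}^n})$.

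The first step is the auxiliary estimate
\[ d_O(\mu\nu^{-1}, \mathbbm{1}_{\mathbb{E}^n}) \leq d_O(\mu,\nu)\bigl(1 + d_O(\nu, \mathbbm{1}_{\mathbb{E}^n})\bigr), \]
which is Lemma~\ref{d_O-lem}(\ref{met-prod2-for}) applied with $\chi := \mu\nu^{-1}$ and $\phi := \nu$ (so that $\chi\phi = \mu$). The second step is right multiplication: fix $\gamma$ and a point $\psi$, and for arbitrary $\phi$ write $\phi\gamma = (\phi\psi^{-1})(\psi\gamma)$. Lemma~\ref{d_O-lem}(\ref{met-prod-for}), applied with $\chi := \phi\psi^{-1}$ and the isometry $\psi\gamma$ in the role of $\phi$, gives
\[ d_O(\phi\gamma, \psi\gamma) \leq d_O(\phi\psi^{-1}, \mathbbm{1}_{\mathbb{E}^n})\bigl(1 + d_O(\psi\gamma, \mathbbm{1}_{\mathbb{E}^n})\bigr), \]
and combining with the auxiliary estimate yields
\[ d_O(\phi\gamma, \psi\gamma) \leq d_O(\phi,\psi)\bigl(1 + d_O(\psi, \mathbbm{1}_{\mathbb{E}^n})\bigr)\bigl(1 + d_O(\psi\gamma, \mathbbm{1}_{\mathbb{E}^n})\bigr). \]
The factor multiplying $d_O(\phi,\psi)$ depends only on $\psi$ and $\gamma$, so $\phi\mapsto\phi\gamma$ is Lipschitz near $\psi$, hence continuous at $\psi$; as $\psi$ was arbitrary, it is continuous.

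The third step is left multiplication, handled the same way: writing $\gamma\phi = \bigl(\gamma\phi\psi^{-1}\gamma^{-1}\bigr)(\gamma\psi)$ and applying Lemma~\ref{d_O-lem}(\ref{met-prod-for}) with $\chi := \gamma\phi\psi^{-1}\gamma^{-1}$ and $\phi:=\gamma\psi$ gives
\[ d_O(\gamma\phi, \gamma\psi) \leq d_O\bigl(\gamma\phi\psi^{-1}\gamma^{-1}, \mathbbm{1}_{\mathbb{E}^n}\bigr)\bigl(1 + d_O(\gamma\psi, \mathbbm{1}_{\mathbb{E}^n})\bigr); \]
then Lemma~\ref{d_O-lem}(\ref{met-conj-bd-for}) (with $\chi:=\gamma$, $\phi:=\phi\psi^{-1}$) followed by the auxiliary estimate bounds $d_O(\gamma\phi\psi^{-1}\gamma^{-1}, \mathbbm{1}_{\mathbb{E}^n})$ by $d_O(\phi,\psi)\bigl(1 + d_O(\psi, \mathbbm{1}_{\mathbb{E}^n})\bigr)\bigl(1 + d_O(\gamma, \mathbbm{1}_{\mathbb{E}^n})\bigr)$, giving again a local Lipschitz bound and hence continuity. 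Finally, conjugation by $\gamma$ is the composition of the continuous maps $\phi\mapsto\phi\gamma^{-1}$ and $\psi\mapsto\gamma\psi$, so it is continuous as well. I do not expect any genuine obstacle here: the only subtlety is the non-invariance of $d_O$, which is precisely absorbed by the bounded correction factors from Lemma~\ref{d_O-lem}.
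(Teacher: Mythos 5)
Your argument is correct: each application of Lemma~\ref{d_O-lem}(\ref{met-conj-bd-for}), (\ref{met-prod-for}), (\ref{met-prod2-for}) is legitimate, the factorizations $\phi\gamma=(\phi\psi^{-1})(\psi\gamma)$ and $\gamma\phi=(\gamma\phi\psi^{-1}\gamma^{-1})(\gamma\psi)$ are right, and the resulting pointwise Lipschitz bounds (with constants depending only on $\psi$ and $\gamma$) do give continuity, after which conjugation follows by composition. However, your route differs from the paper's. The paper does not work with local Lipschitz estimates at all: for left multiplication it uses Lemma~\ref{d_O-lem}(\ref{met-prod0-for}) to get the exact identity $d_O(\gamma\phi_n,\gamma\phi)=d_O(\phi^{-1}\phi_n,\mathbbm{1}_{\mathbb{E}^n})=d_O(\phi_n,\phi)$, i.e.\ $d_O$ is left-invariant and left multiplication is an isometry; for conjugation it uses the exact base-point identity of Lemma~\ref{d_O-lem}(\ref{conj-met-for}) together with the comparison (\ref{met-comp-for}) between $d_O$ and $d_{\gamma(O)}$; and right multiplication is reduced to these two facts via $d_O(\phi_n\gamma,\phi\gamma)=d_{\gamma(O)}(\gamma\phi_n,\gamma\phi)=d_{\gamma(O)}(\phi_n,\phi)$. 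So the paper treats conjugation directly and derives the multiplications from the invariance identities, whereas you treat the multiplications first via the inequality-type items (\ref{met-conj-bd-for}), (\ref{met-prod-for}), (\ref{met-prod2-for}) and obtain conjugation as a composition. The paper's approach buys sharper structural information (left multiplication is a $d_O$-isometry, and conjugation is an isometry up to a change of base point, hence globally Lipschitz with constant $1+\norm{\sigma}_{\mathrm{Eucl}}$), while yours is more uniform in treating all three maps by the same correction-factor estimates and never needs the auxiliary metric $d_{\gamma(O)}$; the only thing you miss is the cleaner exact invariance that item (\ref{met-prod0-for}) provides.
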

\begin{proof}
$\phi \mapsto \gamma \phi \gamma^{-1}$ defines a continuous map on $\mathrm{Isom}(\mathbb{E}^n)$ by Lem.~\ref{d_O-lem}(\ref{conj-met-for}) and~(\ref{met-comp-for}).

\noindent $\phi \mapsto \gamma \phi$ defines a continuous map on $\mathrm{Isom}(\mathbb{E}^n)$ because for all $\phi_n, \phi \in \mathrm{Isom}(\mathbb{E}^n)$,  Lem.~\ref{d_O-lem}(\ref{met-prod0-for}) shows that
\[ d_O(\phi_n, \phi) = d_O(\phi\phi^{-1}\phi_n, \phi) = d_O(\phi^{-1}\phi_n, \mathbbm{1}_{\mathbb{E}^n}) =
   d_O(\gamma \phi \phi^{-1}\phi_n, \gamma \phi) = d_O(\gamma \phi_n, \gamma \phi), \]
hence $d_O(\phi_n, \phi) \rightarrow 0$ implies $d_O(\gamma \phi_n, \gamma \phi) \rightarrow 0$ for $n \rightarrow \infty$.

\noindent $\phi \mapsto \phi \gamma$ defines a continuous map on $\mathrm{Isom}(\mathbb{E}^n)$ because by Lem.~\ref{d_O-lem}(\ref{conj-met-for}) and the same argument as before,
\[ d_O(\phi_n \gamma, \phi \gamma) = d_{\gamma(O)}(\gamma \phi_n, \gamma \phi) = d_{\gamma(O)}(\phi_n, \phi) \]
for all $\phi_n, \phi \in \mathrm{Isom}(\mathbb{E}^n)$, hence Lem.~\ref{d_O-lem}(\ref{met-comp-for}) implies that $d_O(\phi_n \gamma, \phi \gamma) \rightarrow 0$ if $d_O(\phi_n, \phi) \rightarrow 0$ for $n \rightarrow \infty$.
\end{proof}

\noindent The metric $d_O$ depends on the choice of $O$, but (\ref{met-comp-for}) shows that the topologies induced by $d_O$ and $d_{O^\prime}$ for different points $O, O^\prime \in \mathbb{E}^n$ are equal. Topological notions connected to $\mathrm{Isom}(\mathbb{E}^n)$ can thus be defined independently of the choice of~$O$.

\noindent For lack of reference we also prove the following fact on square roots of orthogonal maps on $\mathbb{E}^n_O$:
\begin{lem} \label{sqrt-lem}
For every $\alpha \in O(\mathbb{E}^n_O)$ with $\norm{\alpha - \mathbbm{1}_{\mathbb{E}^n}}_{\mathrm{op}} \leq 1$ there exists $\beta \in  O(\mathbb{E}^n_O)$ such that $\beta^2 = \alpha$ and
\[ \norm{\beta - \mathbbm{1}_{\mathbb{E}^n}}_{\mathrm{op}} \leq
    \norm{\alpha - \mathbbm{1}_{\mathbb{E}^n}}_{\mathrm{op}}. \]
\end{lem}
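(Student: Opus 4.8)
The plan is to reduce everything to the real normal form of an orthogonal map. There is an orthonormal basis of $\mathbb{E}^n_O$ with respect to which $\alpha$ is block-diagonal, the blocks being either $1\times 1$ equal to $\pm 1$, or $2\times 2$ planar rotations $R(\theta_j)$ through angles $\theta_j \in (-\pi,\pi]$. First I would record the operator norms of the blocks of $\alpha - \mathbbm{1}_{\mathbb{E}^n}$: on a $1\times 1$ block equal to $\varepsilon \in \{\pm 1\}$ it is $|\varepsilon - 1| \in \{0,2\}$; on a rotation block, identifying the plane with $\mathbb{C}$ the map $\alpha - \mathbbm{1}_{\mathbb{E}^n}$ acts as multiplication by $e^{i\theta_j}-1$, so its norm is $|e^{i\theta_j}-1| = 2|\sin(\theta_j/2)|$. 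Since $\alpha - \mathbbm{1}_{\mathbb{E}^n}$ is block-diagonal with respect to an orthogonal direct sum decomposition, $\norm{\alpha - \mathbbm{1}_{\mathbb{E}^n}}_{\mathrm{op}}$ equals the maximum of these block norms.

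The hypothesis $\norm{\alpha - \mathbbm{1}_{\mathbb{E}^n}}_{\mathrm{op}} \le 1$ then forces $\alpha$ to have no $1\times 1$ block equal to $-1$ (that block would contribute norm $2$) and every rotation angle to satisfy $2|\sin(\theta_j/2)| \le 1$, i.e. $|\theta_j| \le \pi/3$. I would then define $\beta$ to be the block-diagonal map that is the identity on every $+1$ block and the rotation $R(\theta_j/2)$ on every rotation block $R(\theta_j)$. By construction $\beta \in O(\mathbb{E}^n_O)$ and $\beta^2 = \alpha$ block by block; moreover, by the norm computation above, $\norm{\beta - \mathbbm{1}_{\mathbb{E}^n}}_{\mathrm{op}}$ is the maximum over the rotation blocks of $2|\sin(\theta_j/4)|$, the $+1$ blocks contributing $0$.

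To finish, I compare block by block: since $|\theta_j| \le \pi/3$ we have $|\theta_j/4| \le |\theta_j/2| \le \pi/6 \le \pi/2$, and $|\sin|$ is nondecreasing on $[0,\pi/2]$, so $2|\sin(\theta_j/4)| \le 2|\sin(\theta_j/2)|$ for each $j$. Taking the maximum over the rotation blocks yields $\norm{\beta - \mathbbm{1}_{\mathbb{E}^n}}_{\mathrm{op}} \le \norm{\alpha - \mathbbm{1}_{\mathbb{E}^n}}_{\mathrm{op}}$, as claimed. (Equivalently, one may argue spectrally: $\alpha$ is normal with unimodular eigenvalues closed under complex conjugation, all of the form $e^{i\theta}$ with $|\theta|\le \pi/3$; assigning $e^{i\theta}\mapsto e^{i\theta/2}$ produces a real orthogonal square root $\beta$ with $\norm{\beta - \mathbbm{1}_{\mathbb{E}^n}}_{\mathrm{op}} = \max 2|\sin(\theta/4)| \le \max 2|\sin(\theta/2)| = \norm{\alpha - \mathbbm{1}_{\mathbb{E}^n}}_{\mathrm{op}}$.)

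The only mildly delicate points, and hence where I would take care, are (a) the fact that the operator norm of a map block-diagonal over an \emph{orthogonal} decomposition is the maximum of the block operator norms — true precisely because the summands are mutually orthogonal, so a norm-maximising unit vector can be taken supported in a single block — and (b) the elementary monotonicity estimate $2|\sin(\theta/4)| \le 2|\sin(\theta/2)|$, which needs the angle bound $|\theta| \le \pi/3$ furnished by the hypothesis $\norm{\alpha - \mathbbm{1}_{\mathbb{E}^n}}_{\mathrm{op}} \le 1$. Everything else is routine bookkeeping with the real normal form of orthogonal maps.
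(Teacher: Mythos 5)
Your proof is correct and follows essentially the same route as the paper: reduce $\alpha$ to its real orthogonal normal form, exclude $(-1)$ blocks using the hypothesis, take half-angle rotations blockwise, and compare block norms (your $2\abs{\sin(\theta/2)}$ is exactly the paper's $\sqrt{2-2\cos\theta}$). If anything, your treatment of the block-diagonal operator norm as a \emph{maximum} of block norms is the careful statement (the paper's phrasing "sum" is a slip), and the monotonicity step actually holds for all $\abs{\theta}<\pi$, so the angle bound $\abs{\theta}\leq\pi/3$ is not strictly needed there.
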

\begin{proof}
By standard Linear Algebra (see \cite[p.91pp]{Axl97}) every $\alpha \in O(\mathbb{E}^n_O)$ can be represented by a matrix of the form
\[ A = \left( \begin{array}{cccc} M_1 & 0 & \cdots & 0 \\
                                                    0 & \ddots & 0 & \vdots \\
                                                    \vdots & 0 & \ddots & 0 \\
                                                    0 & \cdots & 0 & M_r \end{array}\right) \]
in terms of an orthonormal basis of $\mathbb{E}^n_O$ where $M_i$ is either one of the $1 \times 1$-matrices $(1)$ or $(-1)$, or the rotation matrix $R(\theta) = \begin{pmatrix} \cos \theta & - \sin \theta \\ \sin \theta & \cos \theta \end{pmatrix}$ associated to a rotation angle $\theta \in (-\pi, \pi)$. Because of the orthonormality of the basis $\norm{\alpha - \mathbbm{1}_{\mathbb{E}^n}}_{\mathrm{op}}$ is the sum of the operator norms of $M_i - \mathbbm{1}$, hence $\norm{\alpha - \mathbbm{1}_{\mathbb{E}^n}}_{\mathrm{op}} \leq 1$ excludes $M_i = (-1)$. Then we can choose $\beta$ to be represented in the same orthonormal basis by
\[ B = \left( \begin{array}{cccc} N_1 & 0 & \cdots & 0 \\
                                                    0 & \ddots & 0 & \vdots \\
                                                    \vdots & 0 & \ddots & 0 \\
                                                    0 & \cdots & 0 & N_r \end{array}\right) \]
where $N_i = (1)$ if $M_i = (1)$ and $N_i = R(\theta/2)$ if $M_i = R(\theta)$. A standard calculation shows that
\begin{eqnarray*}
\norm{R(\theta)-\mathbbm{1}_{\mathbb{R}^2}}^2_{\mathrm{op}} & = &
\max_{(x, y) \in S^1} \norm{(R(\theta)-\mathbbm{1}_{\mathbb{R}^2})\begin{pmatrix} x \\ y \end{pmatrix}}^2_{\mathrm{Eucl}}
  = 2 - 2\cos \theta \geq \\
 & \geq & 2 - 2 \cos \theta/2 = \norm{R(\theta/2)-\mathbbm{1}_{\mathbb{R}^2}}^2_{\mathrm{op}}
\end{eqnarray*}
if $\theta \in (-\pi, \pi)$, hence $\norm{\beta - \mathbbm{1}_{\mathbb{E}^n}}_{\mathrm{op}} \leq
    \norm{\alpha - \mathbbm{1}_{\mathbb{E}^n}}_{\mathrm{op}}$.
\end{proof}

\begin{Def}
A \textit{crystallographic group} $\Gamma$ is a subgroup of the group $\mathrm{Isom}(\mathbb{E}^n)$ such that $\Gamma$ is discrete and $\mathrm{Isom}(\mathbb{E}^n)/\Gamma$ is compact.
\end{Def}

\begin{thm}[Bieberbach 1912, {\cite[Thm.14]{Far81}}] \label{Bieb-thm}
Let $\Gamma \subset \mathrm{Isom}(\mathbb{E}^n)$ be a crystallographic group. Then:
\begin{enumerate}
\item $\Gamma \cap \mathrm{Trans}(\mathbb{E}^n)$ is a lattice of full rank in $\mathrm{Trans}(\mathbb{E}^n)$.
\item The point group $\Gamma / (\Gamma \cap \mathrm{Trans}(\mathbb{E}^n))$ of $\Gamma$ is finite.
\end{enumerate}
\end{thm}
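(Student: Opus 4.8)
The plan is to work with the projection $\pi\colon\mathrm{Isom}(\mathbb{E}^n)\to O(\mathbb{E}^n_O)$ onto the linear part, whose kernel is $\mathrm{Trans}(\mathbb{E}^n)$. Put $L:=\Gamma\cap\mathrm{Trans}(\mathbb{E}^n)=\ker(\pi|_\Gamma)$ and $P:=\pi(\Gamma)\cong\Gamma/L$. Since $\Gamma$ is discrete, $L$ is a discrete subgroup of $\mathrm{Trans}(\mathbb{E}^n)\cong\mathbb{R}^n$, hence free abelian of some rank $k\le n$ spanning a subspace $W$; as $\mathrm{Trans}(\mathbb{E}^n)$ is normal, $L$ is normal in $\Gamma$, so every element of $P$ preserves $W$ and $W^\perp$ and maps the lattice $L$ into itself. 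I would first prove that $P$ is finite; the assertion about the point group is then immediate, and the assertion about the translation lattice follows because $[\Gamma:L]=|P|<\infty$ makes $\mathrm{Isom}(\mathbb{E}^n)/L$ a finite covering of the compact space $\mathrm{Isom}(\mathbb{E}^n)/\Gamma$, hence compact; this space is the total space of a fibre bundle over $\mathrm{Isom}(\mathbb{E}^n)/\mathrm{Trans}(\mathbb{E}^n)=O(\mathbb{E}^n_O)$ with fibre $\mathrm{Trans}(\mathbb{E}^n)/L\cong\mathbb{R}^n/\mathbb{Z}^k$, and compactness of the total space over a compact base forces $\mathbb{R}^n/\mathbb{Z}^k$ to be compact, i.e.\ $k=n$.

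For the finiteness of $P$ I would use two ingredients. The first is a commutator contraction in the orthogonal part: for isometries $\phi,\psi$ with linear parts $\alpha,\beta$ one has $\pi([\phi,\psi])=\alpha\beta\alpha^{-1}\beta^{-1}$, and writing $A=\alpha-\mathbbm{1}_{\mathbb{E}^n}$, $B=\beta-\mathbbm{1}_{\mathbb{E}^n}$ one gets $\alpha\beta\alpha^{-1}\beta^{-1}-\mathbbm{1}_{\mathbb{E}^n}=(AB-BA)\,\alpha^{-1}\beta^{-1}$, so that by Lemma~\ref{d_O-lem}(1) and (2)
\[ \norm{\pi([\phi,\psi])-\mathbbm{1}_{\mathbb{E}^n}}_{\mathrm{op}}\le 2\,\norm{\alpha-\mathbbm{1}_{\mathbb{E}^n}}_{\mathrm{op}}\norm{\beta-\mathbbm{1}_{\mathbb{E}^n}}_{\mathrm{op}}; \]
together with the estimates of Lemma~\ref{d_O-lem} (notably (\ref{conj-met-for}), (\ref{met-prod-for}) and (\ref{met-prod2-for})) this also controls $d_O([\phi,\psi],\mathbbm{1}_{\mathbb{E}^n})$ when $\phi,\psi$ stay in a fixed bounded neighbourhood of $\mathbbm{1}_{\mathbb{E}^n}$. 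The second ingredient is cocompactness: $\mathbb{E}^n/\Gamma$ is compact (a continuous image of $\mathrm{Isom}(\mathbb{E}^n)/\Gamma$ under $g\mapsto g(O)$), so $\Gamma$ acts properly discontinuously and cocompactly on $\mathbb{E}^n$; hence there is $D>0$ with $\Gamma\cdot\overline{B_D(O)}=\mathbb{E}^n$, and for every $R>0$ the set $\Sigma_R:=\{\gamma\in\Gamma:d(\gamma(O),O)\le R\}$ is \emph{finite}, with $\Sigma_{2D}$ generating $\Gamma$.

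Finiteness of $P$ then follows from the classical Zassenhaus--Bieberbach argument, which I would only outline: one feeds the contraction estimate into a bounded — hence, by discreteness, finite — piece $\Sigma\supseteq\Sigma_{2D}$ of $\Gamma$; among the $\gamma\in\Sigma$ with $\pi(\gamma)\neq\mathbbm{1}_{\mathbb{E}^n}$ one picks $\gamma_0$ of minimal $\norm{\pi(\gamma_0)-\mathbbm{1}_{\mathbb{E}^n}}_{\mathrm{op}}$ (the minimum exists as $\Sigma$ is finite); for bounded $\gamma$ with $\norm{\pi(\gamma)-\mathbbm{1}_{\mathbb{E}^n}}_{\mathrm{op}}<\tfrac12$, the commutator $[\gamma_0,\gamma]\in\Gamma$ is again bounded and, by the estimate, has strictly smaller orthogonal part than $\gamma_0$ unless $\pi(\gamma_0)$ commutes with $\pi(\gamma)$, so minimality forces the latter; iterating this and combining with cocompactness (to pass from $\Sigma$ to all of $\Gamma$ and to control translation parts via formula~(\ref{conj-trans-form}) and Lemma~\ref{d_O-lem}(\ref{met-prod0-for})--(\ref{met-prod2-for})), one obtains a positive lower bound for $\norm{\pi(\gamma)-\mathbbm{1}_{\mathbb{E}^n}}_{\mathrm{op}}$ over all $\gamma\in\Gamma$ with $\pi(\gamma)\neq\mathbbm{1}_{\mathbb{E}^n}$. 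Thus $P$ is a discrete subgroup of the compact group $O(\mathbb{E}^n_O)$, hence finite.

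The main obstacle, and the reason this last step is delicate rather than formal, is that $O(\mathbb{E}^n_O)$ contains finite‑order rotations by arbitrarily small angle, so naively iterating commutators of a fixed non‑translation need not shrink \emph{strictly} towards $\mathbbm{1}_{\mathbb{E}^n}$ (it may collapse to the identity prematurely), and — unlike a lattice in $\mathbb{R}^n$ — the group $\Gamma$ is \emph{not} generated by elements close to $\mathbbm{1}_{\mathbb{E}^n}$; moreover the lower bound obtained genuinely depends on $n$. This is precisely why one must confine the whole argument to a finite subset of $\Gamma$ and argue by minimality rather than within an abstract neighbourhood of $\mathbbm{1}_{\mathbb{E}^n}$. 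For the intermediate range $0<k<n$ there is a cleaner route using that $\mathrm{Aut}(L)$ is finite (discrete, lying in $GL_k(\mathbb{Z})$, and compact, lying in $O(W)$): replacing $\Gamma$ by the finite‑index subgroup $\Gamma_0:=\{\gamma\in\Gamma:\pi(\gamma)|_W=\mathbbm{1}_W\}$, one projects the action of $\Gamma_0$ to $W^\perp$ — the image is again discrete and cocompact in $\mathrm{Isom}(W^\perp)$ — inducts on the dimension, lifts a non‑trivial translation of $W^\perp$, and forms the power killing its finite‑order part on $W$, producing a translation of $\mathbb{E}^n$ with non‑zero $W^\perp$‑component, contradicting $L\subset W$; this leaves only $k=n$ and $k=0$, and the residual case $k=0$ (a crystallographic group in dimension $\ge1$ contains a non‑trivial translation) is exactly what the commutator argument is needed for.
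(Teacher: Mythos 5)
The paper does not actually prove Theorem~\ref{Bieb-thm}: it is quoted as Bieberbach's classical theorem with a pointer to \cite[Thm.~14]{Far81}, so there is no in-paper argument to compare with, and your attempt has to be judged as a self-contained proof. Your reductions are sound: the identity $\alpha\beta\alpha^{-1}\beta^{-1}-\mathbbm{1}_{\mathbb{E}^n}=(AB-BA)\alpha^{-1}\beta^{-1}$ and the estimate $\norm{\pi([\phi,\psi])-\mathbbm{1}_{\mathbb{E}^n}}_{\mathrm{op}}\le 2\norm{A}_{\mathrm{op}}\norm{B}_{\mathrm{op}}$ are correct; finiteness of $P=\pi(\Gamma)$ does give statement (2) and, via the finite cover $\mathrm{Isom}(\mathbb{E}^n)/L\to\mathrm{Isom}(\mathbb{E}^n)/\Gamma$ and the fibre $\mathrm{Trans}(\mathbb{E}^n)/L$, statement (1); and the dimension induction for $0<k<n$ can be made to work (indeed, once the image of $\Gamma_0$ in $\mathrm{Isom}(W^\perp)$ contains a nontrivial translation, any preimage is already a pure translation of $\mathbb{E}^n$ with nonzero $W^\perp$-component, since its linear part is $\mathbbm{1}_W\oplus\mathbbm{1}_{W^\perp}$; no power is needed).

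However, everything funnels into the one statement you do not prove: a positive lower bound for $\norm{\pi(\gamma)-\mathbbm{1}_{\mathbb{E}^n}}_{\mathrm{op}}$ over all $\gamma\in\Gamma$ with $\pi(\gamma)\neq\mathbbm{1}_{\mathbb{E}^n}$ (equivalently, finiteness of $P$; equivalently, in your residual case $k=0$, the existence of a nontrivial translation in $\Gamma$). This is the actual content of Bieberbach's first theorem, and the minimality argument you outline does not establish it. Concretely: the commutator $[\gamma_0,\gamma]$ of two elements of your finite set $\Sigma$ need not lie in $\Sigma$ (its translation part can exceed $2D$), so ``minimality forces commutation'' is not available until one has also shown that the translation parts of iterated commutators stay bounded and then invoked discreteness on the larger finite sets $\Sigma_R$; and even granted that, a lower bound on the rotation parts of a generating set says nothing about arbitrary products --- a single rotation by a small irrational angle generates a subgroup of $O(2)$ containing elements arbitrarily close to the identity, so discreteness of $P$ cannot follow from properties of $\pi(\Sigma_{2D})$ alone. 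Excluding this is exactly where the classical proof needs the pigeonhole/cocompactness step (producing elements of $\Gamma$ with arbitrarily small rotation part and bounded, nonzero, essentially prescribed translation part) followed by the contraction-plus-discreteness argument showing that such rotation parts must in fact be trivial. You flag this difficulty yourself, but flagging it is not closing it: as written, the proposal is an outline of the Zassenhaus--Bieberbach strategy with its decisive step missing. If the goal is merely to match the paper, the appropriate ``proof'' is the citation the paper gives; if the goal is a self-contained argument, the finiteness of the point group still has to be carried out in full.
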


\begin{Def}
A crystallographic group $\Gamma \subset \mathrm{Isom}(\mathbb{E}^n)$ is called \textit{symmorphic} (with respect to a point $P \in \mathbb{E}^n$) if there is a finite subgroup $G \subset \mathrm{Isom}(\mathbb{E}^n)$ isomorphic to the point group $\Gamma / (\Gamma \cap \mathrm{Trans}(\mathbb{E}^n))$ such that
\[ \Gamma \cong (\Gamma \cap \mathrm{Trans}(\mathbb{E}^N)) \rtimes G \subset \mathrm{Trans}(\mathbb{E}^n) \rtimes O(\mathbb{E}^N_P) \cong \mathrm{Isom}(\mathbb{E}^n). \]
\end{Def}

\begin{prop} \label{symmorph-prop}
For every crystallographic group $\Gamma \subset \mathrm{Isom}(\mathbb{E}^n)$ there exists a point $P \in \mathbb{E}^n$ and a crystallographic group $\Gamma^\ast \subset \mathrm{Isom}(\mathbb{E}^n)$ symmorphic with respect to $P$ such that
$\Gamma \subset \Gamma^\ast$ and the point groups of $\Gamma$ and $\Gamma^\ast$ are isomorphic.
\end{prop}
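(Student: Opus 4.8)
\noindent The plan is to realise $\Gamma^\ast$ as a semidirect product $L^\ast \rtimes G^\ast_P$, where $L^\ast$ is a lattice finer than $L := \Gamma \cap \mathrm{Trans}(\mathbb{E}^n)$ and $G^\ast_P$ is a finite group of isometries fixing a carefully chosen point $P$; such a group is symmorphic with respect to $P$ by construction, so the content lies in choosing $P$ so that, after refining the lattice, the point group of $\Gamma$ genuinely lifts to isometries fixing $P$. To set up, fix $O \in \mathbb{E}^n$; by Theorem~\ref{Bieb-thm} the lattice $L$ has full rank and $G := \Gamma/L$ is finite, say of order $m$. Choose coset representatives $\gamma_g \in \Gamma$ for $g \in G$ and decompose $\gamma_g = \tau_g\alpha_g$ with $\tau_g \in \mathrm{Trans}(\mathbb{E}^n)$, $\alpha_g \in O(\mathbb{E}^n_O)$. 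The orthogonal part $\alpha_g$ is independent of the chosen representative, and $g \mapsto \alpha_g$ is an injective homomorphism onto a finite group $\bar G \subset O(\mathbb{E}^n_O)$. Writing $\alpha\cdot\tau := \overrightarrow{O\alpha(\tau(O))}$ for the resulting action of $O(\mathbb{E}^n_O)$ on translations, formula~(\ref{conj-trans-form}) shows $L$ is $\bar G$-invariant, and comparing $\gamma_g\gamma_h$ with $\gamma_{gh}$ via~(\ref{conj-trans-form}) gives the cocycle identity $\tau_{gh} \equiv \tau_g + \alpha_g\cdot\tau_h \pmod L$; thus $g \mapsto \tau_g + L$ is a $1$-cocycle of $G$ valued in the torus $\mathrm{Trans}(\mathbb{E}^n)/L$, and this cocycle is precisely the obstruction to $\Gamma$ itself being symmorphic.

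\noindent Next, refine the lattice. Set $L^\ast := \tfrac1m L$, a $\bar G$-invariant lattice of full rank containing $L$, and put $v := \tfrac1m\sum_{h\in G}\tau_h$ and $P := v(O)$. Summing the relation $\tau_h - \alpha_g\cdot\tau_h \equiv \tau_h + \tau_g - \tau_{gh} \pmod L$ over $h \in G$ yields $(v - \alpha_g\cdot v) - \tau_g \in \tfrac1m L = L^\ast$ for every $g \in G$; this averaging identity reflects that $m$ annihilates $H^1\!\bigl(G, \mathrm{Trans}(\mathbb{E}^n)/L\bigr)$. Hence $\ell_g := (v - \alpha_g\cdot v) - \tau_g \in L^\ast$, and $\delta_g := \ell_g\gamma_g$ is an isometry with orthogonal part $\alpha_g$ whose translation part based at $O$ equals $v - \alpha_g\cdot v$ --- precisely the condition for an isometry with orthogonal part $\alpha_g$ to fix the point $v(O) = P$. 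Since two isometries fixing $P$ with equal orthogonal parts coincide, $g \mapsto \delta_g$ is an injective homomorphism, so $G^\ast_P := \{\delta_g : g \in G\}$ is a finite subgroup of $\mathrm{Isom}(\mathbb{E}^n)$ fixing $P$ and isomorphic to $G$.

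\noindent Now assemble $\Gamma^\ast := L^\ast\,G^\ast_P = \{\ell\delta : \ell \in L^\ast,\ \delta \in G^\ast_P\}$. Conjugating $\ell \in L^\ast$ by $\delta_g$ gives the translation $\alpha_g\cdot\ell \in L^\ast$ by~(\ref{conj-trans-form}) and $\bar G$-invariance, so $L^\ast \trianglelefteq \Gamma^\ast$ and $\Gamma^\ast = L^\ast \rtimes G^\ast_P$ is symmorphic with respect to $P$ by definition. An element $\ell\delta_g$ is a translation only if $\alpha_g = \mathbbm{1}_{\mathbb{E}^n}$, i.e. $g = e$, hence $\Gamma^\ast \cap \mathrm{Trans}(\mathbb{E}^n) = L^\ast$ and the point group of $\Gamma^\ast$ is $G^\ast_P \cong G$, isomorphic to the point group of $\Gamma$. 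Since $L^\ast$ has full rank and $[\Gamma^\ast : L^\ast] = m < \infty$, the group $\Gamma^\ast$ is discrete (an element near $\mathbbm{1}_{\mathbb{E}^n}$ has orthogonal part near $\mathbbm{1}_{\mathbb{E}^n}$, hence trivial as $\bar G$ is finite, hence lies in the discrete set $L^\ast$) and $\mathrm{Isom}(\mathbb{E}^n)/\Gamma^\ast$ is compact (a finite quotient of $\mathrm{Isom}(\mathbb{E}^n)/L^\ast$, which is compact since $O(\mathbb{E}^n_O)$ and the torus $\mathrm{Trans}(\mathbb{E}^n)/L^\ast$ are); thus $\Gamma^\ast$ is crystallographic. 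Finally $L \subset L^\ast \subset \Gamma^\ast$ and $\gamma_g = \ell_g^{-1}\delta_g \in \Gamma^\ast$, so $\Gamma \subset \Gamma^\ast$.

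\noindent The crux is the lattice refinement together with the choice of $v$: one must recognise that the obstruction cocycle of $\Gamma$, though generically nonzero (glide reflection groups being the standard non-symmorphic examples), is killed upon passing from $L$ to $\tfrac1m L$, and then pin down the correct vector $v$ and verify the sign-sensitive equivalence ``translation part $v - \alpha_g\cdot v$'' $\Leftrightarrow$ ``fixes $v(O)$''. The remaining points --- $\bar G$-invariance of the lattice, the semidirect-product structure, and discreteness and cocompactness of a finite-index overgroup of a full lattice in $\mathrm{Isom}(\mathbb{E}^n)$ --- are routine.
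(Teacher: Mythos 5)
Your argument is correct. Note that the paper does not prove Proposition~\ref{symmorph-prop} at all --- it simply cites \cite{Far81}, p.~534f. --- so what you have written is a self-contained substitute for that citation, and it is the classical one: the translation parts $\tau_g$ form a cocycle with values in $\mathrm{Trans}(\mathbb{E}^n)/L$, the order $m$ of the point group annihilates the corresponding class, and averaging produces the vector $v$ so that after refining $L$ to $\tfrac1m L$ the point group lifts to the stabilizer of $P=v(O)$. All the steps check out against the paper's conventions: the cocycle identity and the $\bar G$-invariance of $L$ follow from formula~(\ref{conj-trans-form}), the averaging computation does land $\ell_g$ in $\tfrac1m L$, the identity ``translation part $v-\alpha_g\cdot v$ $\Leftrightarrow$ fixes $v(O)$'' is the correct fixed-point condition, $\Gamma^\ast\cap\mathrm{Trans}(\mathbb{E}^n)=L^\ast$ gives the isomorphism of point groups as required by the paper's definition of symmorphic, and the discreteness/cocompactness argument for $\Gamma^\ast$ (finite point group plus full-rank lattice, continuous surjection from $\mathrm{Isom}(\mathbb{E}^n)/L^\ast$) is sound, matching Theorem~\ref{Bieb-thm} and the paper's definition of crystallographic. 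The only caveat is presentational: since the paper treats this as a known result, your proof is an expansion of the reference rather than an alternative to an argument given in the text.
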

\begin{proof}
See \cite[p.534p.]{Far81}.
\end{proof}

\subsection{Convex Polytopes}

\noindent We fix some notation on convex polytopes, following \cite{Gru03}:
\begin{Def}
A \textit{convex polytope} in $\mathbb{R}^n$ is the \textit{convex hull} of a finite set of points $\{p_1, \ldots, p_k\}$, that is, the set of points
\[ \langle p_1, \ldots, p_k \rangle := \left\{ p = \sum_{i=1}^k t_i p_i : \sum_{i=1}^k t_i = 1 \right\} \subset \mathbb{R}^n. \]
We say that $p_1, \ldots, p_k$ \textit{minimally generate} the convex hull $\langle p_1, \ldots, p_k \rangle$ if the convex hull of a proper subset of $\{ p_1, \ldots, p_k \}$ is also a proper subset of $\langle p_1, \ldots, p_k \rangle$.
\end{Def}

\noindent It is easy to see that a convex polytope $P = \langle p_1, \ldots, p_k \rangle$ is a \textit{convex subset} of $\mathbb{R}^n$, that is, for all points $p, q \in P$ the points $t \cdot p + (1-t) \cdot q$, $0 \leq t \leq 1$, on the connecting line segment are also on $P$.

\noindent Every convex polytope $P \subset \mathbb{R}^n$ spans an affine subspace $L$, and the dimension of $P$ is the dimension of $L$. In particular, $\dim P = n$ if and only if $P$ has a non-empty open interior $P^o$.

\begin{Def}
Let $P = \langle p_1, \ldots, p_k \rangle$ be an $n$-dimensional convex polytope in $\mathbb{R}^n$, minimally generated by $p_1, \ldots, p_k$. Then for a subset of points $p_{i_1}, \ldots, p_{i_l}$, the convex polytope $F_{i_1, \ldots, i_l} := \langle p_{i_1}, \ldots, p_{i_l} \rangle$ is called an \textit{$m$-face} of $P$ if $F_{i_1, \ldots, i_l}$ is contained in the boundary $\partial P \subset P$ and $\dim F_{i_1, \ldots, i_l} = m$.

\noindent In particular, the points $p_1, \ldots, p_k$ are called the \textit{vertices} of $P$, $2$-faces are called \textit{edges} and $(n-1)$-faces are called \textit{facets} of $P$.
\end{Def}

\noindent Dually, a convex polytope $P \subset \mathbb{E}^n$ can be described as the intersection of finitely many \textit{half-spaces}
\[ H := \{(x_1, \ldots, x_n) \in \mathbb{E}^n: a_0 \leq a_1x_1 + \cdots + a_nx_n \} \]
with $a_0, a_1, \ldots, a_n \in \mathbb{R}$ and $(a_1, \ldots, a_n) \neq (0, \ldots, 0)$. If $H$ is a half-space such that $P \subset H$ and the affine hyperplane $E := \{(x_1, \ldots, x_n) \in \mathbb{E}^n: a_0 = a_1x_1 + \cdots + a_nx_n \}$ bounding $H$ intersects $P$ then $E$ is called a \textit{supporting hyperplane} of $P$. In that case, the intersection $P \cap E$ is a face of $P$.

\subsection{Point Sets}
Subsets of $\mathbb{E}^n$ with special properties are useful to construct tilings.
\begin{Def}
A \textit{point set} $X \subset \mathbb{E}^n$ is called \textit{relatively dense} if there exists a number $R > 0$ such that $B_R(y) \cap X \neq \emptyset$ for all $y \in \mathbb{E}^n$.

\noindent A point set $X \subset \mathbb{E}^n$ is called \textit{uniformly discrete} if there exists a number $r > 0$ such that $B_r(x) \cap X = \{x\}$ for all $x \in X$.

\noindent A point set $X \subset \mathbb{E}^n$ is called a \textit{Delone set} if $X$ is relatively dense and uniformly discrete.
\end{Def}

\section{Simple Tilings and Tiling Spaces} \label{tilings-sec}

\subsection{Definitions} \label{tiling-defs-ssec}

\noindent We restrict our attention to tilings made of convex polytopes as tiles. Of course, there are lots of quite regular tilings using non-convex polytopes or even more general tiles (see the book \cite{GS89}). But it is widely believed (however rarely proven in particular situations) that these more general tilings are equivalent (in whatever sense prefered) to tilings with convex polytopes, for example by extracting a point set from the general tiling and then applying the Voronoi cell tiling construction on the point set (see below). Voronoi cell tilings are automatically tilings by convex polytopes, and furthermore these convex polytopes meet facet-to-facet. Thus we also include the latter property into our definition of a tiling.

\noindent For all the definitions below we identify the points of $\mathbb{R}^n$ with those of the affine space $\mathbb{E}^n$.

\begin{Def}
A set of convex polytopes $\{t_i\}_{i \in I}$ is called a \textit{tiling} $T$ of $\mathbb{E}^n$ if
\begin{enumerate}
\item $\bigcup_{i \in I} t_i = \mathbb{E}^n$ and
\item for all $i, j \in I$ the intersection $t_i \cap t_j$ is a face of both $t_i$ and $t_j$. In particular, if $t_i \cap t_j$ is $(n-1)$-dimensional, the two tiles meet full-facet to full-facet.
\end{enumerate}
A \textit{patch} of a tiling $T$ is a subset of the tiles in $T$.
\end{Def}

\noindent If $A \subset \mathbb{E}^n$ is a bounded subset then $[T]_A$ denotes the patch of all tiles $t$ in a tiling $T$ of $\mathbb{E}^n$  intersecting $A$.

\noindent If $\phi$ is an isometry of $\mathbb{E}^n$ then for each tiling $T = \{t_i\}_{i \in I}$ of $\mathbb{E}^n$ the set $\phi(T) := \{\phi(t_i)\}_{i \in I}$ is also a tiling of $\mathbb{E}^n$. If the isometry is a translation $\tau$ in $\mathrm{Trans}(\mathbb{E}^n)$ we also write $T + \tau$ for the shifted tiling.

\begin{const}
Let $X \subset \mathbb{E}^n$ be a point set. To each point $x_0 \in X$ we associate the \textit{Voronoi-cell}
\[ V_{x_0}(X) := \{y \in \mathbb{E}^n: \norm{y - x_0} \leq \norm{y - x}\ \mathrm{for\ all\ } x \in X\} \]
\textit{of} $x_0$ \textit{in} $X$.

\noindent For fixed $x \in X$ different from $x_0$, $\{y \in \mathbb{E}^n: \norm{y - x_0} \leq \norm{y - x}\}$ is the half-space $H_{x,x_0}$ bounded by the affine hyperplane $E_{x,x_0}$ perpendicular to the line through $x$ and $x_0$ and passing through the midpoint of the line segment from $x_0$ to $x$ that contains $x_0$. So
\[ V_{x_0}(X) = \bigcap_{x \in X} H_{x,x_0}, \]
and if finitely many of these half-spaces suffice to cut out the Voronoi-cell then $V_{x_0}(X)$ is a convex polytope.

\noindent However this need not hold for arbitrary point sets $X \subset \mathbb{E}^n$.
\end{const}

\begin{lem} \label{VC-simplex-lem}
Let $X = \{p_0, p_1, \ldots, p_{n+1}\} \subset \mathbb{E}^n$ be a point set such that the convex hull of $p_1,\ldots,  p_{n+1}$ is an $n$-simplex $\Delta \subset \mathbb{E}^n$ and $p_0 \in \Delta^o$. Then $V_{p_0}(X)$ is a convex polytope.
\end{lem}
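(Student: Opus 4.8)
The plan is to show that the Voronoi cell $V_{p_0}(X)$ is cut out by the finitely many half-spaces $H_{p_i,p_0}$, $i=1,\dots,n+1$, i.e. that
\[ V_{p_0}(X) = \bigcap_{i=1}^{n+1} H_{p_i,p_0}, \]
so that $V_{p_0}(X)$ is a finite intersection of half-spaces and hence (being also bounded, see below) a convex polytope. The inclusion $\subseteq$ is trivial from the definition, since these are among the half-spaces defining $V_{p_0}(X)$. So the real content is the reverse inclusion: if $y$ satisfies $\norm{y-p_0}\le\norm{y-p_i}$ for all $i=1,\dots,n+1$, then automatically $\norm{y-p_0}\le\norm{y-x}$ for every $x\in X$ — but here $X$ has no further points, so this is vacuous, and in fact $\bigcap_{i=1}^{n+1}H_{p_i,p_0}$ already equals $V_{p_0}(X)$ on the nose. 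Thus the only genuine task is to verify that this finite intersection of half-spaces is actually a \emph{bounded} set, hence a convex polytope rather than an unbounded polyhedron.

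First I would set up coordinates or barycentric data adapted to the simplex $\Delta=\langle p_1,\dots,p_{n+1}\rangle$. Since $p_0\in\Delta^o$, write $p_0=\sum_{i=1}^{n+1}\lambda_i p_i$ with all $\lambda_i>0$ and $\sum\lambda_i=1$. The hyperplane $E_{p_i,p_0}$ bounding $H_{p_i,p_0}$ is perpendicular to the segment $[p_0,p_i]$ through its midpoint; equivalently, writing $v_i := p_i - p_0$, we have
\[ H_{p_i,p_0} = \{\, y : \langle y-p_0,\ v_i\rangle \le \tfrac12\norm{v_i}^2 \,\}. \]
The key geometric point is that the $n+1$ vectors $v_1,\dots,v_{n+1}$ positively span $\mathbb{E}^n_{p_0}$: indeed $\sum_i \lambda_i v_i = \sum_i\lambda_i(p_i-p_0) = p_0 - p_0 = 0$ with all $\lambda_i>0$, and any $n$ of the $v_i$ are linearly independent because $p_1,\dots,p_{n+1}$ are affinely independent. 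Hence $\{v_1,\dots,v_{n+1}\}$ is a positive basis of $\mathbb{E}^n_{p_0}$.

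Next I would use this positive-spanning property to prove boundedness. Suppose for contradiction that $\bigcap_{i=1}^{n+1}H_{p_i,p_0}$ is unbounded; then it contains a ray $p_0 + t w$, $t\ge 0$, for some unit vector $w$ (a standard fact: a closed convex set containing points arbitrarily far from a fixed point, here $p_0$, contains a ray in its recession cone). Plugging $y = p_0+tw$ into the inequality defining $H_{p_i,p_0}$ gives $t\langle w, v_i\rangle \le \tfrac12\norm{v_i}^2$ for all $t\ge0$, which forces $\langle w,v_i\rangle\le 0$ for every $i$. But then $0 = \langle w, \sum_i\lambda_i v_i\rangle = \sum_i \lambda_i\langle w,v_i\rangle$ is a sum of terms that are $\le 0$ with strictly positive coefficients, so $\langle w,v_i\rangle = 0$ for all $i$; since the $v_i$ span $\mathbb{E}^n_{p_0}$, this gives $w=0$, contradicting $\norm{w}=1$. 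Hence the intersection is bounded, and being a finite intersection of half-spaces it is a convex polytope. Finally, since $V_{p_0}(X)=\bigcap_{i=1}^{n+1}H_{p_i,p_0}$ by the triviality of the remaining constraints, $V_{p_0}(X)$ is a convex polytope, as claimed.

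The main obstacle, such as it is, is purely the boundedness argument: one must correctly extract a recession direction from a hypothetical unbounded convex set and then exploit that $p_0$ lies in the \emph{open} simplex (the strict positivity of the $\lambda_i$ is what makes the "sum of non-positive terms equals zero" step work). Everything else — the identification of $V_{p_0}(X)$ with the finite intersection, and the half-space description of each $H_{p_i,p_0}$ — is immediate from the definitions already recorded in the excerpt. One should also remark that $p_0\in\Delta^o$ is used again implicitly to guarantee $V_{p_0}(X)$ is $n$-dimensional (it contains a small ball around $p_0$), though the statement only claims it is a convex polytope.
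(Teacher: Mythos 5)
Your proof is correct, and it takes a somewhat different route from the paper on the only nontrivial point, the boundedness of $\bigcap_{i=1}^{n+1} H_{p_i,p_0}$. The paper normalizes the configuration by an affine-linear transformation so that $p_0$ is the origin, $p_1,\dots,p_n$ are the standard basis vectors and $p_{n+1}$ has strictly negative coordinates, and then reads off boundedness from explicit coordinate descriptions of the half-spaces; you instead stay coordinate-free, encode $p_0\in\Delta^o$ via barycentric coordinates as the relation $\sum_i\lambda_i v_i=0$ with all $\lambda_i>0$ for $v_i=p_i-p_0$ (so the $v_i$ positively span $\mathbb{E}^n_{p_0}$), and rule out a recession direction $w$ of a hypothetical unbounded intersection by the sign argument $0=\sum_i\lambda_i\langle w,v_i\rangle$ with each term $\leq 0$. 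The two arguments exploit exactly the same geometric input (interiority of $p_0$ makes $p_{n+1}-p_0$ a negative combination of the other $p_i-p_0$, equivalently positive spanning), but yours has the advantage of working directly with the genuine bisector half-spaces $H_{p_i,p_0}$, whereas the paper's affine normalization is not an isometry and so does not carry perpendicular bisectors to perpendicular bisectors — its explicit formulas are for the bisectors of the transformed points, and one must invoke the affine invariance of boundedness to transfer the conclusion back; the paper's version in return yields a very concrete picture of the cell. Both proofs rely in the same way on the standard fact that a bounded finite intersection of closed half-spaces is a convex polytope, and your recession-ray step is the standard fact about unbounded closed convex sets in finite dimensions, so there is no gap.
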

\begin{proof}
By definition, $V_{p_0}(\{p_0, p_1, \ldots, p_{n+1}\}) = \bigcap_{i=1}^{n+1} H_{p_i, p_0}$.

\noindent The convex hull of $p_1, \ldots, p_{n+1}$ will be an $n$-simplex if and only if $p_1-p_{n+1}, \ldots, p_n - p_{n+1}$ are linearly independent. $p_0 \in \Delta^o$ implies that also $p_1-p_0, \ldots, p_n - p_0$ are linearly independent, and $p_{n+1}-p_0$ is a linear combination of the $p_i-p_0$, with strictly negative coefficients.

\noindent Consequently, there is an affine-linear transformation of $\mathbb{E}^n$ such that
\[ p_0 = (0,\ldots,0), p_1 = (1,0, \ldots, 0), \ldots, p_n = (0, \ldots, 0, 1), p_{n+1} = (-a_1, \ldots, -a_n) \]
with $a_1, \ldots, a_n > 0$. Then for $i = 1, \ldots, n$
\[H_{p_i, p_0} = \{x_i = -\frac{1}{2}\}\ \mathrm{and}\ H_{p_{n+1}, p_0} = \{a_1x_1+ \cdots + a_nx_n \geq - \sum_{i=1}^n \frac{a_i^2}{2}\}. \]
This implies that $V_{p_0}(\{p_0, p_1, \ldots, p_{n+1}\}) = \bigcap_{i=1}^{n+1} H_{p_i, p_0}$ is bounded, hence a convex polytope.
\end{proof}

\begin{prop} \label{VT-prop}
Let $X \subset \mathbb{E}^n$ be a Delone point set. Then $\{V_x(X): x \in X\}$, the set of all Voronoi-cells of points $x \in X$ in $X$,   is a tiling of $\mathbb{E}^n$, called the Voronoi-cell tiling $VT(X)$ associated to $X$.
\end{prop}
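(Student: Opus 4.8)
The plan is to verify the two defining properties of a tiling for $VT(X) = \{V_x(X) : x \in X\}$: that the Voronoi cells cover $\mathbb{E}^n$, and that any two of them meet in a common face (facet-to-facet when the intersection is $(n-1)$-dimensional). Throughout I would exploit the two properties of a Delone set separately: relative density (with constant $R$) to control the covering and to bound each cell, and uniform discreteness (with constant $r$) to guarantee the cells are genuine $n$-dimensional polytopes with nonempty interior.

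First I would show each $V_x(X)$ is a convex polytope. For a fixed $x \in X$, relative density gives $R>0$ with $B_R(y) \cap X \neq \emptyset$ for all $y$; I claim $V_x(X) \subset B_{2R}(x)$, since if $\norm{y-x} > 2R$ then the ball $B_R(y)$ contains some $x' \in X$ with $\norm{y-x'} \le R < R < \norm{y-x}$, so $y \notin H_{x',x}$. Hence only the points $x' \in X \cap B_{4R}(x)$ (say) can contribute a bounding hyperplane $E_{x',x}$ that actually meets $V_x(X)$, and by uniform discreteness $X \cap B_{4R}(x)$ is finite. So $V_x(X)$ is the intersection of finitely many half-spaces and is bounded, hence a convex polytope; moreover it has nonempty interior because $B_{r/2}(x) \subset V_x(X)$ (any $y$ with $\norm{y-x} < r/2$ satisfies $\norm{y-x} < r/2 \le \norm{x-x'} - \norm{y-x} \le \norm{y-x'}$ for all other $x' \in X$), so $\dim V_x(X) = n$.

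Next, the covering property: given $y \in \mathbb{E}^n$, the set $X \cap B_{2R}(y)$ is nonempty (relative density) and finite (uniform discreteness), so the continuous function $x' \mapsto \norm{y-x'}$ attains a minimum on it at some $x_0$; since points outside $B_{2R}(y)$ are farther from $y$ than the nearest point in $B_R(y) \subset B_{2R}(y)$, this $x_0$ minimizes distance to $y$ over all of $X$, whence $y \in V_{x_0}(X)$. Thus $\bigcup_{x \in X} V_x(X) = \mathbb{E}^n$. For the face-to-face condition I would argue in the standard way: for $x \neq x'$ in $X$, a point $y \in V_x(X) \cap V_{x'}(X)$ is equidistant from $x$ and $x'$, so the intersection lies in the hyperplane $E_{x',x}$; writing $V_x(X) \cap V_{x'}(X) = V_x(X) \cap E_{x',x}$ (and symmetrically for $x'$), this is the intersection of the polytope with one of its supporting hyperplanes, hence a face of $V_x(X)$ by the last remark of Section~1.2, and likewise a face of $V_{x'}(X)$. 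That $E_{x',x}$ is supporting (not separating) follows because $V_x(X)$ lies in the closed half-space $H_{x',x}$ by definition, and it meets $E_{x',x}$ precisely when the intersection of cells is nonempty.

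The main obstacle is the facet-to-facet clause: when $V_x(X) \cap V_{x'}(X)$ is $(n-1)$-dimensional I must check it is a \emph{full} facet of each cell, i.e.\ that the face cut out of $V_x(X)$ by $E_{x',x}$ coincides with the face cut out of $V_{x'}(X)$ by the same hyperplane, rather than being a proper sub-polytope of one of them. This is where the symmetry $H_{x',x} \cap E_{x',x} = H_{x,x'} \cap E_{x',x} = E_{x',x}$ and the identity $V_x(X) \cap E_{x',x} = V_{x'}(X) \cap E_{x',x}$ must be used carefully: a point $y \in E_{x',x}$ lies in $V_x(X)$ iff $\norm{y-x} \le \norm{y-x''}$ for all $x'' \in X$, and since $\norm{y-x} = \norm{y-x'}$ the same condition characterizes membership in $V_{x'}(X)$, so the two faces are literally the same set. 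Hence the intersection is simultaneously a face of both cells and the facet-to-facet property is automatic; I would spell this small verification out, as it is the only place the definition's extra clause does real work.
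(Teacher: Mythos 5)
Your covering argument and your face-to-face argument are sound; indeed your face-to-face step (characterising membership of a point of $E_{x',x}$ in either cell by the same system of inequalities, since $\norm{y-x}=\norm{y-x'}$ on the hyperplane) is a slightly more direct route than the paper's contradiction argument, and your bound $V_x(X)\subset B_{2R}(x)$ from relative density replaces the paper's simplex construction (Lem.~\ref{VC-simplex-lem}). The genuine gap is in the step where you conclude that $V_x(X)$ is a convex polytope. From ``$E_{x',x}$ does not meet $V_x(X)$ when $\norm{x'-x}>4R$'' you infer that $V_x(X)$ equals the intersection of the finitely many half-spaces $H_{x',x}$ with $x'\in X\cap B_{4R}(x)$. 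The implicit principle --- that half-spaces whose boundary hyperplanes miss the set are redundant in a half-space presentation --- is false in general: a closed disk is an intersection of closed half-planes all of whose boundary lines stay at positive distance from it (take, for each $k$, the half-planes bounded by lines at distance $1+\frac{1}{k}$ from the centre, over a dense set of directions), and a disk is not a polytope. What your estimates actually give is that every far half-space contains $B_{2R}(x)$ and $V_x(X)\subseteq B_{2R}(x)$, hence $V_x(X)=P\cap B_{2R}(x)$ with $P:=\bigcap_{x'\in X\cap B_{4R}(x)}H_{x',x}$; that is the intersection of a polyhedron with a ball, not yet a polytope statement.

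The missing ingredient is a proof that the far half-spaces may really be discarded, i.e.\ that $P\subseteq B_{2R}(x)$ (equivalently $V_x(X)=P$). This is true with your constants but needs an argument: if $z\in P$ had $\norm{z-x}>2R$, choose $y$ on the segment from $x$ to $z$ with $2R<\norm{y-x}\le 3R$; relative density yields $x''\in X$ with $\norm{y-x''}\le R<\norm{y-x}$, and $\norm{x''-x}\le\norm{x''-y}+\norm{y-x}\le 4R$, so $y\notin H_{x'',x}\supseteq P$, contradicting convexity of $P$ (which contains both $x$ and $z$). Once $P\subseteq B_{2R}(x)$ is known, every $H_{x',x}$ with $\norm{x'-x}>4R$ contains $P$, so $V_x(X)=P$ is a bounded intersection of finitely many half-spaces, hence a convex polytope, and the nonempty interior follows from uniform discreteness as you say. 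Note that the paper avoids this issue by a different device: it first produces a bounded cell from $x_0$ and $n+1$ points of $X$ forming a simplex around it (Lem.~\ref{VC-simplex-lem}), and then discards exactly those half-spaces $H_{x,x_0}$ that contain this bounded over-approximation --- a redundancy criterion that is valid as stated, unlike ``the boundary hyperplane misses $V_x(X)$''.
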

\begin{proof}
For $x_0 \in X$ pick points $y_1, \ldots, y_{n+1} \in \mathbb{E}^n$ such that the convex hull of $y_1, \ldots, y_{n+1}$ is an $n$-simplex  whose interior contains $x_0$. Since $X$ is relatively dense there exists $r > 0$ such that $B_r(y_i) \cap X \neq \emptyset$ for $i = 1, \ldots, n+1$. Possibly dilating $\mathbb{E}^n$ with origin in $x_0$ we can pick points $x_i \in B_r(y_i) \cap X$ such that the convex hull of $x_1, \ldots, x_{n+1}$ is still an $n$-simplex $\Delta \subset \mathbb{E}^n$ whose interior contains $x_0$: Linear independence is an open and homogeneous condition on the coordinates of the points $x_1, \ldots, x_{n+1}$ with respect to the origin in $x_0$.

\noindent Applying Lem.~\ref{VC-simplex-lem} we conclude that $V_{x_0}(\{x_0, x_1, \ldots, x_{n+1}\})$ is a convex polytope. Since $X$ is uniformly discrete the set $X_0 \subset X$ of points $x$ such that $V_{x_0}(\{x_0, x_1, \ldots, x_{n+1}\}) \not\subset H_{x,x_0}$ is finite. Consequently, every Voronoi-cell
\[ V_{x_0}(X) = V_{x_0}(\{x_0, x_1, \ldots, x_{n+1}\}) \cap \bigcap_{x \in X_0} H_{x, x_0}, \]
in $\{V_x(X): x \in X\}$ is a convex polytope.

\noindent Uniform discreteness of $X$ also implies that the non-empty set $B_r(y) \cap X$ is finite, for all $y \in \mathbb{E}^n$. Consequently, there is an $x_0 \in X$ such that $\norm{y-x_0} = \min_{x \in X} \norm{y-x}$, hence $y \in V_{x_0}(X)$. We conclude that $\bigcup_{x \in X} V_x(X) = \mathbb{E}^n$.

\noindent Finally, assume that the intersection $V_{x_1}(X) \cap V_{x_2}(X)$ is nonempty. The construction of Voronoi-cells implies that $V_{x_1}(X) \cap V_{x_2}(X) \subset E_{x_1, x_2}$. Assume that $y \in \left( V_{x_1}(X) \cap  E_{x_1, x_2} \right) \setminus \left( V_{x_2}(X) \cap E_{x_1, x_2}) \right)$. Again by construction of Voronoi-cells there must be a point $x_3 \in X$ such that $\norm{x_3-y} < \norm{x_2 - y}$, and $\norm{x_1 - y} \leq \norm{x_3 - y}$. But both inequalities together contradict $\norm{x_1 - y} = \norm{x_2 - y}$. We conclude that
\[  V_{x_1}(X) \cap V_{x_2}(X) = V_{x_1}(X) \cap  E_{x_1, x_2} =  V_{x_2}(X) \cap  E_{x_1, x_2}. \]
Since $V_{x_1}(X) \cap  E_{x_1, x_2}$ is a face of $V_{x_1}(X)$ and $V_{x_1}(X) \cap  E_{x_1, x_2}$ is a face of $V_{x_2}(X)$, the two Voronoi-cells $V_{x_1}(X)$ and $V_{x_2}(X)$ intersect face-to-face.
\end{proof}

\begin{rem} \label{uniform-local-VT-rem}
A careful analysis of the proof shows that for $R \gg 0$ not depending on $x \in X$ the Voronoi-cell $V_x(X)$ is completely determined by the points in $B_R(x)$: Choose the $y_1, \ldots, y_{n+1}$ in a configuration around $x_0$ that is independent of $x_0$, up to isometries, and such that the $y_i$ have the same distance to $x_0$. Then the maximal distance $d$ of points in the Voronoi-cell $V_{x_0}(\{x_0, y_1, \ldots, y_{n+1}\})$ to $x_0$ is also independent of $x_0$. Since the maximal distance $d(x_1, \ldots, x_{n+1})$ of points in the Voronoi-cell $V_{x_0}(\{x_0, x_1, \ldots, x_{n+1}\})$ to $x_0$ depends continuously on $x_1, \ldots, x_{n+1}$ choosing $d \gg 0$ implies $d(x_1, \ldots, x_{n+1}) < \frac{3}{2}d$. The assertion follows.
\end{rem}

\begin{Def}
A tiling $T$ of $\mathbb{E}^n$ is called (\textit{isometrically}) \textit{simple} if there exists a finite set of convex polytopes $t_1, \ldots, t_r$ such that all tiles $t \in T$ are an isometric image $\phi_t(t_i)$ of one of the tiles $t_1, \ldots, t_r$, for an isometry $\phi_t \in \mathrm{Isom}(\mathbb{E}^n)$.

\noindent The tiles $t_1, \ldots, t_r$ are called \textit{prototiles}.
\end{Def}

\noindent In the literature, simple tilings are normally built from translations of prototiles only. Relaxing to isometrically (or \textit{rotationally}) simple tilings can reduce the number of prototiles needed (as in the case of Penrose tilings by rhombs), but there are also rotationally simple tilings whose tiles cannot be obtained by translating a finite number of prototiles. A prominent example is the \textit{pinwheel tiling} constructed by Conway as a substitution tiling of right-angled triangles with side lengths $1$ and $2$ at the right angle (see Figure~\ref{pinwheel-subst-fig}): Radin \cite{Rad94} showed that in such a tiling the triangles point to infinitely many directions.

\begin{figure}[h!  ]
\begin{center}
\begin{tikzpicture}
\draw (0,0) -- (5,0) node [midway, above]{$\sqrt{5}$};
\draw (0,0) -- (4,-2) node [midway, below] {2};
\draw (4,-2) -- (5,0) node [midway, below=5pt] {1};
\draw (2,0) -- (2,-1);
\draw (4,0) -- (4,-2);
\draw (2,-1) -- (4,0);
\draw (2,-1) -- (4,-1);
\end{tikzpicture}
\end{center}
\caption {Conway's triangle decomposition leading to Pinwheel Tiling  }
\label{pinwheel-subst-fig}
\end{figure}
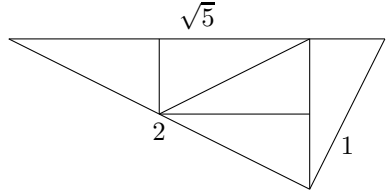

\begin{prop}
Let $X \subset \mathbb{E}^n$ be a Delone set. Then the Voronoi-cell tiling $VT(X)$ of $\mathbb{E}^n$ is simple if for sufficiently large $R \gg 0$ there exist only finitely many point configurations $B_R(x) \cap X$, up to isometries, when $x$ ranges over $X$.
\end{prop}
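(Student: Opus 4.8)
The plan is to show that the finitely many distinct local configurations $B_R(x) \cap X$ (up to isometry) give rise to only finitely many Voronoi cells up to isometry, and that these serve as prototiles. First I would invoke Remark~\ref{uniform-local-VT-rem}: there is a radius $R_0 \gg 0$, independent of $x$, such that the Voronoi cell $V_x(X)$ is completely determined by the points of $X$ lying in $B_{R_0}(x)$. More precisely, the combinatorial type of $V_x(X)$ together with the position of the bounding half-spaces $H_{x',x}$ for $x' \in B_{R_0}(x) \cap X$ determines $V_x(X)$ exactly. Hence the isometry class of the pointed configuration $(B_{R_0}(x) \cap X, x)$ determines the isometry class of the pointed tile $(V_x(X), x)$, and a fortiori the isometry class of $V_x(X)$ as a convex polytope.

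Next I would take $R := \max\{R_0, R_1\}$, where $R_1$ is a radius for which, by hypothesis, there are only finitely many configurations $B_{R_1}(x) \cap X$ up to isometry; then there are also only finitely many configurations $B_R(x) \cap X$ up to isometry (enlarging $R$ only refines the equivalence, but one must be slightly careful: a larger ball can only distinguish \emph{more} configurations, so finiteness at radius $R_1$ does not immediately give finiteness at radius $R > R_1$). The correct move is to take $R$ from the outset to be $\max\{R_0, R_1\}$ \emph{in the hypothesis} — i.e., use that the hypothesis holds for \emph{all} sufficiently large radii, in particular for this $R$. Then each isometry class of $B_R(x) \cap X$ determines, via the previous paragraph, an isometry class of Voronoi cell; since there are finitely many of the former there are finitely many of the latter. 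Pick one representative $t_j$ from each occurring isometry class of cells $V_x(X)$; these are finitely many convex polytopes, and by construction every tile of $VT(X)$ is $\phi_t(t_j)$ for some $j$ and some $\phi_t \in \mathrm{Isom}(\mathbb{E}^n)$. Together with Proposition~\ref{VT-prop}, which already gives that $VT(X)$ is a tiling by convex polytopes meeting facet-to-facet, this shows $VT(X)$ is simple with prototiles $t_1, \ldots, t_r$.

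The main obstacle is making precise the claim, implicit in Remark~\ref{uniform-local-VT-rem}, that "$V_x(X)$ is determined by $B_{R_0}(x) \cap X$" in an \emph{isometry-equivariant} way: one needs that if an isometry $\psi$ carries $B_{R_0}(x) \cap X$ onto $B_{R_0}(x') \cap X$ (taking $x$ to $x'$), then the same $\psi$ carries $V_x(X)$ onto $V_{x'}(X)$. This follows because the Voronoi cell is defined by a formula — $V_x(X) = \bigcap_{x' \in X} H_{x',x}$, and by the Remark only the half-spaces from points within distance $R_0$ matter — that is manifestly natural under isometries: $\psi(H_{x',x}) = H_{\psi(x'),\psi(x)}$ for any isometry $\psi$. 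So an isometry matching the pointed local configurations automatically matches the defining half-space families, hence the cells. Spelling this naturality out carefully is the one genuinely substantive point; the rest is bookkeeping with the definition of simple tiling.
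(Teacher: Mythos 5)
Your proof is correct and follows essentially the same route as the paper, whose entire proof is the one-line remark that the proposition is a direct consequence of Rem.~\ref{uniform-local-VT-rem}; you simply make explicit the two ingredients that remark leaves implicit, namely the uniform determination radius $R_0$ and the isometry-equivariance of the half-space description $V_x(X)=\bigcap_{x'} H_{x',x}$. The only point to keep straight is that the configurations $B_R(x)\cap X$ should be read as pointed (centred at $x$), as your equivariance step in fact requires; since each configuration is a finite set of uniformly bounded cardinality, passing between pointed and unpointed classes costs nothing for the finiteness count.
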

\begin{proof}
This is a direct consequence of Rem.~\ref{uniform-local-VT-rem}.
\end{proof}

\noindent This proposition justifies the following notion:
\begin{Def}
A Delone set $X \subset \mathbb{E}^n$ is called \textit{simple} if for all $R \gg 0$ there exist only finitely many point configurations $B_R(x) \cap X$ up to isometries when $x$ ranges over $X$.
\end{Def}

\subsection{Tiling Spaces} The set $\mathcal{T}_n$ of all simple tilings of $\mathbb{E}^n$ can be endowed with a number of metrics all inducing the same topology on the set. Tiling spaces will be the closed subsets in this topology that are also invariant under the natural action of the isometry group $\mathrm{Isom}(\mathbb{E}^n)$.

\noindent The main idea to construct a metric on $\mathcal{T}_n$  is to consider tilings to be close if, after applying isometries close to the identity map $\mathbbm{1}_{\mathbb{E}^n}$ on $\mathbb{E}^n$ to the tilings, patches of the tilings covering a large disk coincide. Fixing an origin $O \in \mathbb{E}^n$, the metric may formally be constructed as follows:
\begin{Def} \label{dist-tilings-def}
Let $T, T^\prime$ be two simple tilings of $\mathbb{E}^n$. Define $R(T, T^\prime) > 0$ to be the supremum of all $r>0$ such that there exist $\phi, \psi \in \mathrm{Isom}(\mathbb{E}^n)$ satisfying $d_O(\phi, \mathbbm{1}_{\mathbb{E}^n}), d_O(\psi, \mathbbm{1}_{\mathbb{E}^n}) < \frac{1}{2r}$ and
\[ \left[ \phi(T) \right]_{B_r(O)} = \left[ \psi(T^\prime) \right]_{B_r(O)}. \]
Then the \textit{distance} between $T$ and $T^\prime$ is set to be
\[ d_O(T, T^\prime) := \min \{\ln(3/2), \ln(1+\frac{1}{R(T,T^\prime)})\}. \]
\end{Def}

\noindent The analogous definition of a distance using only translations is standard (see \cite[p.6]{Sad08}). The slight changes to the definition of $d_O(T, T^\prime)$ help to prove the triangle inequality when general isometries are involved (see below), but are in no way the only possible.

\begin{prop}
$(\mathcal{T}_n, d_O)$ is a metric space.
\end{prop}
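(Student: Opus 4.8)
The plan is to verify the three axioms of a metric for $d_O$ on $\mathcal{T}_n$: non-negativity together with the identity-of-indiscernibles property, symmetry, and the triangle inequality. Non-negativity is immediate from the definition, since $R(T,T')>0$ forces $\ln(1+1/R(T,T'))>0$, and the minimum with $\ln(3/2)>0$ is still positive; symmetry is also immediate, because the roles of $\phi,T$ and $\psi,T'$ in the defining condition are interchangeable, so $R(T,T')=R(T',T)$ and hence $d_O(T,T')=d_O(T',T)$. The only subtle point in the first axiom is the implication $d_O(T,T')=0 \Rightarrow T=T'$: here $d_O(T,T')=0$ forces $R(T,T')=\infty$, so for every $r>0$ there are isometries $\phi_r,\psi_r$ with $d_O(\phi_r,\mathbbm{1}_{\mathbb{E}^n}),d_O(\psi_r,\mathbbm{1}_{\mathbb{E}^n})<\tfrac{1}{2r}$ and $[\phi_r(T)]_{B_r(O)}=[\psi_r(T')]_{B_r(O)}$. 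I would let $r\to\infty$, so that $\phi_r,\psi_r\to\mathbbm{1}_{\mathbb{E}^n}$, use the continuity of the isometry action on individual tiles (a tile $\phi_r(t_i)$ converges to $t_i$ as $\phi_r\to\mathbbm{1}$) to argue that on any fixed ball $B_\rho(O)$ the patch $[\phi_r(T)]_{B_\rho(O)}$ eventually equals $[T]_{B_\rho(O)}$ up to isometries converging to the identity, and similarly for $T'$, to conclude $[T]_{B_\rho(O)}=[T']_{B_\rho(O)}$ for all $\rho$, hence $T=T'$. Some care is needed because the patch $[\phi_r(T)]_{B_r(O)}$ is taken over a ball whose radius also grows, but choosing a fixed $\rho$ and taking $r\gg\rho$ handles this.

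The main work, as the authors themselves flag, is the triangle inequality $d_O(T,T'')\le d_O(T,T')+d_O(T',T'')$. If either term on the right equals the cap $\ln(3/2)$ then, since each summand is nonnegative and the left side is at most $\ln(3/2)$ by definition, the inequality is trivial; so I may assume both $d_O(T,T')$ and $d_O(T',T'')$ are strictly below the cap, equivalently $R(T,T')$ and $R(T',T'')$ are large. Given $r_1<R(T,T')$ and $r_2<R(T',T'')$, choose isometries with $d_O(\phi_1,\mathbbm{1}),d_O(\psi_1,\mathbbm{1})<\tfrac{1}{2r_1}$ and $[\phi_1(T)]_{B_{r_1}(O)}=[\psi_1(T')]_{B_{r_1}(O)}$, and $d_O(\phi_2,\mathbbm{1}),d_O(\psi_2,\mathbbm{1})<\tfrac{1}{2r_2}$ and $[\phi_2(T')]_{B_{r_2}(O)}=[\psi_2(T'')]_{B_{r_2}(O)}$. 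The standard trick is to compose: we want to exhibit isometries $\Phi,\Psi$ close to $\mathbbm{1}$ with $[\Phi(T)]_{B_s(O)}=[\Psi(T'')]_{B_s(O)}$ for $s$ slightly smaller than $\min(r_1,r_2)$ (shrunk to account for the displacement caused by the intermediate isometry $\psi_1^{-1}\phi_2$, which moves $B_s(O)$ by at most roughly $d_O(\psi_1,\mathbbm{1})+d_O(\phi_2,\mathbbm{1})$). Concretely I would set $\Phi := \psi_1^{-1}\phi_2\phi_1$ applied to $T$ and $\Psi := \psi_1^{-1}\phi_2 \cdot (\text{the isometry built from }\psi_2)$ — here the bookkeeping with Lemma~\ref{d_O-lem}(\ref{prod-id-for}), (\ref{met-prod-for}) and (\ref{met-conj-bd-for}) is needed to bound $d_O(\Phi,\mathbbm{1})$ and $d_O(\Psi,\mathbbm{1})$ in terms of the $\tfrac{1}{2r_i}$, and the non-commutativity of $\mathrm{Isom}(\mathbb{E}^n)$ is exactly why the conjugation-type estimates (\ref{met-conj-bd-for}) and the factor $\ln(1+\tfrac{1}{R})$ rather than $\tfrac{1}{R}$ were built into the definition. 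One then checks that the resulting $s$ satisfies $1+\tfrac{1}{s}\le(1+\tfrac{1}{r_1})(1+\tfrac{1}{r_2})$, or the appropriate variant, so that taking logarithms and passing to the suprema over admissible $r_1,r_2$ yields the triangle inequality for $d_O$.

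The hard part will be the precise radius-and-displacement bookkeeping in the triangle inequality: making sure that after composing the three "alignments" the total isometry used to align $T$ with $T''$ still has $d_O$-distance to the identity below $\tfrac{1}{2s}$ for an $s$ large enough that $\ln(1+\tfrac1s)$ beats $d_O(T,T')+d_O(T',T'')$, and simultaneously that the patches still agree on $B_s(O)$ despite the intermediate isometry shifting the ball. This is where one must use that $d_O(\phi_i,\mathbbm{1}),d_O(\psi_i,\mathbbm{1})<\tfrac{1}{2r_i}$ both controls how far a point of $B_s(O)$ is moved and controls the operator-norm part, and where Lemma~\ref{d_O-lem}(\ref{met-comp-for}), (\ref{met-conj-bd-for}) convert distances measured at $O$ to distances measured at a shifted origin. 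I expect the clean way to organise this is: fix $\varepsilon>0$, pick $r_i>R(T,T')-\varepsilon$ etc., run the composition, obtain a valid $s$ with $\ln(1+\tfrac1s)\ge -\ln\!\big((1-e^{-d_O(T,T')})(1-e^{-d_O(T',T'')})\big)$ or a similar explicit bound chosen so the definition of $d_O$ via $\min\{\ln(3/2),\cdot\}$ absorbs the error, then let $\varepsilon\to0$.
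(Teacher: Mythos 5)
Your verification of symmetry, positivity and the capped case of the triangle inequality matches the paper, and your argument for $d_O(T,T')=0\Rightarrow T=T'$ (a limit argument with $\phi_r,\psi_r\to\mathbbm{1}_{\mathbb{E}^n}$, using local finiteness to pass to limits of tiles) is an acceptable variant of the paper's contrapositive argument, which instead exhibits two distinct tiles with overlapping interiors and notes that this obstruction is stable under small isometries. The genuine gap is in the triangle inequality, and it sits exactly in the step you defer as ``bookkeeping''. The composition you write down, $\Phi=\psi_1^{-1}\phi_2\phi_1$, does not follow from the two patch equalities: since isometries do not commute, the only chain that the hypotheses give directly is obtained by applying $\phi_2\psi_1^{-1}$ to $[\phi_1(T)]_{B_{r_1}(O)}=[\psi_1(T')]_{B_{r_1}(O)}$, which yields $\Phi=\phi_2\psi_1^{-1}\phi_1$ and $\Psi=\psi_2$ on a slightly shrunk ball. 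This corrected naive choice fails quantitatively: by Lemma~\ref{d_O-lem}(\ref{inv-met-for}),(\ref{prod-id-for}) one only gets $d_O(\Phi,\mathbbm{1}_{\mathbb{E}^n})<\frac{1}{2r_1}+\frac{1}{2r_1}+\frac{1}{2r_2}$, which exceeds $\frac{1}{2r_1}+\frac{1}{2r_2}$ by a \emph{first-order} term, so the admissible $s$ satisfies $\frac{1}{s}\geq\frac{2}{r_1}+\frac{1}{r_2}$, and $1+\frac1s\le(1+\frac{1}{r_1})(1+\frac{1}{r_2})$ would force $r_2\le 1$, impossible since $r_2>2$. The multiplicative slack $\frac{1}{r_1r_2}$ in the product can only absorb second-order errors, so no amount of radius bookkeeping rescues this composition.

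What is missing is the paper's specific asymmetric arrangement. One applies $\phi_2$ (not $\phi_2\psi_1^{-1}$) to the first equality, so that the isometry on the $T$-side is just $\phi_2\phi_1$ with $d_O(\phi_2\phi_1,\mathbbm{1}_{\mathbb{E}^n})\le\frac{1}{2r_1}+\frac{1}{2r_2}=\frac{1}{2r_0}$, where $r_0=\frac{r_1r_2}{r_1+r_2}$; the resulting middle term is $\phi_2\psi_1(T')$. The second equality is then rewritten by applying the conjugate $\bar\psi_1:=\phi_2\psi_1\phi_2^{-1}$, using $\bar\psi_1\phi_2=\phi_2\psi_1$, so that it also becomes a statement about $\phi_2\psi_1(T')$; the mismatch is thereby pushed to the $T''$-side, where Lemma~\ref{d_O-lem}(\ref{met-conj-bd-for}) gives $d_O(\bar\psi_1,\mathbbm{1}_{\mathbb{E}^n})\le\frac{1}{2r_1}\bigl(1+\frac{1}{2r_2}\bigr)$ and hence $d_O(\bar\psi_1\psi_2,\mathbbm{1}_{\mathbb{E}^n})\le\frac{1}{2r_0}+\frac{1}{4r_1r_2}$ --- an excess of second order only, which the factor $\frac{1}{r_1r_2}$ in $(1+\frac{1}{r_1})(1+\frac{1}{r_2})$ absorbs. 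Note also that the final radius is not ``slightly smaller than $\min(r_1,r_2)$'': the binding constraint is the $\frac{1}{2s}$ bound on the isometries, and $s$ comes out near $r_0$, which is about half of $\min(r_1,r_2)$ when $r_1\approx r_2$. Since the triangle inequality is the only nontrivial axiom here, supplying this arrangement (or an equivalent one) is essential for the proposal to constitute a proof.
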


\begin{proof}
For tilings $T, T^\prime, T^{\prime\prime} \in \mathcal{T}_n$ we have to show that
\begin{enumerate}
\item $d_O(T,T')=d_O(T',T)$,
\item $d_O(T,T') \geq 0$, and $d_O(T,T')=0$ if and only if $T=T'$, and
\item $d_O(T,T'')\leq d_O(T,T')+d_O(T',T'')$.
\end{enumerate}
By definition, $d_O$ is clearly symmetric, positive and $d_O(T,T)=0$.

\noindent Suppose $T\neq T'$. Then there exists tiles $t \in T,t'\in T' $ such that $t\neq t'$ but the interiors $t^\circ  ,t'^\circ  $ intersect. Therefore, for $\epsilon $ small enough we have for all $\phi,\psi\in Isom(\mathbb{E}^n)$ with $d_O ( \phi , \mathbbm{1}_{\mathbb{E}^n}), d_O ( \psi , \mathbbm{1}_{\mathbb{E}^n})<\epsilon $ that \[\phi(t) \neq \psi (t') \text{ and  } (\phi(t))^\circ \cap (\psi(t'))^\circ \neq \emptyset . \]
Choose  $r>\frac{1}{2\epsilon}$ (hence $\epsilon>\frac{1}{2r}$) such that $B_r(O)\cap t$ and $B_r(O)\cap t'$ are both non-empty. The argument above shows that whatever $\phi ,\psi\in Isom(\mathbb{E}^n)$ with $d_O (\phi , \mathbbm{1}_{\mathbb{E}^n} ) , d_O ( \psi , \mathbbm{1}_{\mathbb{E}^n})<\frac{1}{2r}$ we choose, \[  [\phi (T)]_{B_r(O)} \neq [\psi (T')]_{B_r(O)}  \]
The same holds for all $r' \geq r$, hence $R(T,T')\leq r$ and $d_O(T,T')\neq 0$.

\noindent Finally, we have to prove the triangle inequality. If $R(T,T')\leq 2$ or $R(T',T'')\leq  2$ then the triangle inequality follows immediately from the definition of $d_O$. So assume that $R(T,T') >2$ and $R(T',T'')> 2$.

\noindent Then there exist $2< r \leq R(T,T')$ and $\phi , \psi\in Isom(\mathbb{E}^n)$ such that $d_O ( \phi , \mathbbm{1}_{\mathbb{E}^n} ) , d_O ( \psi , \mathbbm{1}_{\mathbb{E}^n})<\frac{1}{2r}$ and
\[ [\phi(T)]_{B_r(O)}=[\psi (T')]_{B_{r}(O)}.\]
Similarly, there exist $2< r'\leq R(T',T'')$ and $\chi , \omega \in Isom(\mathbb{E}^n)$  such that  $d_O(\chi , \mathbbm{1}_{\mathbb{E}^n} ), d_O (\omega , \mathbbm{1}_{\mathbb{E}^n})<\frac{1}{2r'} $ and
 \[ [\chi(T')]_{B_{r'}(O)}=[\omega(T'')]_{B_{r'}(O)}.\]
Choose  $r_0= \frac{rr'}{r+r'}$.  The equality of patches $[\phi (T)]_{B_r(O)}=[\psi (T')]_{B_r(O)}$ implies $[\chi (\phi(T))]_{\chi (B_r(O))}= [\chi(\psi (T'))]_{\chi (B_r(O))}$. Assume $\chi =\tau \cdot \alpha$ where $\tau \in \mathrm{Trans}(\mathbb{E}^n)$, $\alpha \in O(\mathbb{E}^n_O)$. Then we have $\alpha(B_r(O))=B_r(O)$, and consequently $[\chi (\phi(T))]_{\tau (B_r(O))}= [\chi(\psi (T'))]_{\tau (B_r(O))}$. The choice of $r_0$ implies that $B_{r_0}(O) \subset \tau(B_r(O))$ since
\[ \| \tau \| = d_O (\tau, \mathbbm{1}_{\mathbb{E}^n}) \leq d_O (\tau, \mathbbm{1}_{\mathbb{E}^n}) +d_O (\alpha, \mathbbm{1}_{\mathbb{E}^n} )= d_O (\tau \alpha, \mathbbm{1}_{\mathbb{E}^n} )<\frac{1}{2r'} \] and
\[ r-\frac{1}{2r'}=\frac{2rr'-1}{2r'} >r_0, \]
because $r, r^\prime >2$. Hence,
\[ [\chi(\phi(T))]_{B_{r_0}(O)} =  [\chi(\psi(T'))]_{B_{r_0}(O)}. \leqno{(\ast)}\]
$[\chi(T')]_{B_{r'}(O)}=[\omega(T'')]_{B_{r'}(O)}$ implies $[\bar{\psi}\big(\chi(T')\big)]_{\bar{\psi}B_{r'}(O)}=[\bar{\psi}\big(\omega(T'')\big)]_{\bar{\psi}B_{r'}(O)}$, with $\bar{\psi}=\chi \psi \chi^{-1}$. By \ref{d_O-lem} (\ref{met-conj-bd-for}),
\[d_O(\bar{\psi}, \mathbbm{1}_{\mathbb{E}^n}) = d_O(\chi\psi\chi ^{-1}, \mathbbm{1}_{\mathbb{E}^n})\leq d_O (\psi , \mathbbm{1}_{\mathbb{E}^n})\big(1+d_O(\chi,\mathbbm{1}_{\mathbb{E}^n})\big) <\frac{1}{2r}\big(1+\frac{1}{2r'}\big). \]
$r, r' > 2$ implies
\[ r'-\frac{1}{2r}(1+\frac{1}{2r'}) > r_0=\frac{rr'}{r+r'}. \]
Therefore as before, \[ [\chi \psi (T')]_{B_{r_0}(O)}=[\bar{\psi}\big(\chi (T')\big)]_{B_{r_0}(O)}=[\bar{\psi}\big(\omega (T'')\big)]_{B_{r_0}(O)}. \leqno{(\ast\ast)}    \]
Together with $d_O(\chi\phi,\mathbbm{1}_{\mathbb{E}^n})\leq d_O( \chi , \mathbbm{1}_{\mathbb{E}^n})+d_O( \phi, \mathbbm{1}_{\mathbb{E}^n}) \leq \frac{1}{2r'}+\frac{1}{2r}=\frac{1}{2r_0}$ and
\[ d_O( \bar{\psi} \omega, \mathbbm{1}_{\mathbb{E}^n} ) \leq  d_O(\bar{\psi}, \mathbbm{1}_{\mathbb{E}^n})+d_O( \omega, \mathbbm{1}_{\mathbb{E}^n})
   \leq \frac{1}{2r }+ \frac{1}{2r}\frac{1}{2r'} + \frac{1}{2r'} = \frac{1}{2r_0}+\frac{1}{2r} \cdot \frac{1}{2r'}, \]
($\ast$) and ($\ast\ast$) show that
\[ \frac{1}{r_0}+\frac{1}{2r}\frac{1}{r'} \geq \frac{1}{R(T,T'')}. \]
This implies $d(T,T'') \leq d(T,T')+d(T',T'')$ because then
\[ \ln\big(1+\frac{1}{R(T,T'')}\big) \leq \ln\big(1+\frac{1}{r_0} + \frac{1}{2r}\cdot\frac{1}{r'} ) \leq
    \ln (1+\frac{1}{r})+\ln(1+\frac{1}{r'}) \]
by taking $\exp$ of both sides for the second inequality. The triangle inequality is shown.
\end{proof}

\noindent The metric $d_O$ on the space  $\mathcal{T}_n$ of simple tilings heavily depends on the choice of the origin $O \in \mathbb{E}^n$. But the topology induced by the metric is independent of $O$:
\begin{prop} \label{indep-top-met-prop}
The topologies on $\mathcal{T}_n$ induced by the metrics $d_O$ and $d_{O^\prime}$ are equal, for all points $O, O^\prime \in \mathbb{E}^n$.
\end{prop}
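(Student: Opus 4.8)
The plan is to reduce the statement to a comparison of the numbers $R_O(T,T')$ and $R_{O'}(T,T')$ occurring in Definition~\ref{dist-tilings-def}; I add a subscript to record the chosen origin. Since $d_O(T,T')=\min\{\ln(3/2),\ln(1+1/R_O(T,T'))\}$ and $d_{O'}(T,T')$ is given by the \emph{same} formula in $R_{O'}(T,T')$, the only dependence on the origin sits in $R_O$ resp.\ $R_{O'}$. Hence, to see that the two metric topologies coincide it is enough to show that for every tiling $T$ and every $\varepsilon\in(0,\ln(3/2))$ there is a $\delta>0$ with $d_O(T,T')<\delta\Rightarrow d_{O'}(T,T')<\varepsilon$, together with the symmetric statement obtained by interchanging $O$ and $O'$; then a $d_{O'}$-ball around any tiling contains a $d_O$-ball around it and vice versa, so the two families of open sets agree. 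Equivalently, I want: $R_{O'}(T,T')\ge R_O(T,T')/(1+s)$ as soon as $R_O(T,T')>1+s$, where $s:=d(O,O')$.

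Next I would prove this core estimate. Write $\sigma:=\overrightarrow{OO'}$, so that $O'=\sigma(O)$ and $s=\|\sigma\|_{\mathrm{Eucl}}$; if $s=0$ the two metrics literally coincide, so assume $s>0$. Fix $r$ with $1+s\le r<R_O(T,T')$ and choose $\phi,\psi\in\mathrm{Isom}(\mathbb{E}^n)$ with $d_O(\phi,\mathbbm{1}_{\mathbb{E}^n}),d_O(\psi,\mathbbm{1}_{\mathbb{E}^n})<\tfrac1{2r}$ and $[\phi(T)]_{B_r(O)}=[\psi(T')]_{B_r(O)}$. I claim the \emph{same} isometries together with the deflated radius $r':=r/(1+s)$ are admissible in the definition of $R_{O'}(T,T')$. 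Indeed, $r'+s\le r$ is exactly the inequality $1+s\le r$, so $B_{r'}(O')\subseteq B_r(O)$; as a tile meeting the smaller ball also meets the larger one, the patch equality restricts to $[\phi(T)]_{B_{r'}(O')}=[\psi(T')]_{B_{r'}(O')}$. And by Lemma~\ref{d_O-lem}(\ref{met-comp-for}), $d_{O'}(\phi,\mathbbm{1}_{\mathbb{E}^n})=d_{\sigma(O)}(\phi,\mathbbm{1}_{\mathbb{E}^n})\le(1+s)\,d_O(\phi,\mathbbm{1}_{\mathbb{E}^n})<(1+s)\tfrac1{2r}=\tfrac1{2r'}$, and likewise for $\psi$. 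Therefore $R_{O'}(T,T')\ge r/(1+s)$, and letting $r$ increase to $R_O(T,T')$ yields $R_{O'}(T,T')\ge R_O(T,T')/(1+s)$.

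Finally I would close the argument by elementary bookkeeping with $x\mapsto\ln(1+1/x)$. If $d_O(T,T')<\delta$ and $\delta\le\ln(3/2)$, then the minimum defining $d_O$ is not attained at $\ln(3/2)$, so $\ln(1+1/R_O(T,T'))<\delta$, i.e.\ $R_O(T,T')>1/(e^{\delta}-1)$. Choosing $\delta>0$ so small that $1/(e^{\delta}-1)\ge(1+s)/(e^{\varepsilon}-1)$ (possible since the right-hand side is finite, and such $\delta$ is automatically $<\ln(3/2)$ because $\varepsilon<\ln(3/2)$) forces $R_O(T,T')>1+s$, hence $R_{O'}(T,T')\ge R_O(T,T')/(1+s)>1/(e^{\varepsilon}-1)$, hence $d_{O'}(T,T')\le\ln(1+1/R_{O'}(T,T'))<\varepsilon$. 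Interchanging $O$ and $O'$, which changes nothing since $d(O',O)=d(O,O')=s$, gives the reverse implication, and the proposition follows. I do not expect a real obstacle here; the one point that needs care is that, when changing the origin, one cannot keep both the isometries and the radius fixed — the metric $d_{O'}$ on $\mathrm{Isom}(\mathbb{E}^n)$ can exceed $d_O$ by a factor $(1+s)$, so the radius must be deflated by that same factor, and one has to check that this deflated radius $r'=r/(1+s)$ still satisfies the ball inclusion $B_{r'}(O')\subseteq B_r(O)$, which holds precisely once $r\ge 1+s$.
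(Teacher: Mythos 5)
Your proof is correct and takes essentially the same route as the paper's: both rely on Lemma~\ref{d_O-lem}(\ref{met-comp-for}) to bound the $d_{O'}$-size of the witnessing isometries and on a ball inclusion to shrink the radius, reusing the same $\phi,\psi$ to get a lower bound for $R_{O'}(T,T')$ in terms of $R_O(T,T')$ (your deflation factor $1+s$ versus the paper's factor $2C$ with $B_r(O')\subset B_{2r}(O)$), whence the topologies agree. The only difference is bookkeeping: you spell out the final $\varepsilon$--$\delta$ step that the paper compresses into "comparability of the metrics for small distances."
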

\begin{proof}
The underlying reason for the assertion to hold is that by \ref{d_O-lem} (\ref{met-comp-for}), the metrics $d_O, d_{O^\prime}$ on $\mathbb{E}^n$ are comparable, that is, there exists a constant $C > 1$ such that for all $\phi, \psi \in \mathrm{Isom}(\mathbb{E}^n)$,
\[ \frac{1}{C} \cdot d_{O^\prime}(\phi, \psi) \leq d_O(\phi, \psi) \leq C \cdot d_{O^\prime}(\phi, \psi). \]
Furthermore, we use that $B_r(O^\prime) \subset B_{2r}(O)$ for $r > \norm{\overrightarrow{OO^\prime}}_{\mathrm{Eucl}}$, and vice versa.

\noindent In more details, assume that $R_O(T, T^\prime) > 2 \cdot \norm{\overrightarrow{OO^\prime}}_{\mathrm{Eucl}}$ holds for two simple tilings $T, T^\prime$ of $\mathbb{E}^n$. By definition there exist $r > \norm{\overrightarrow{OO^\prime}}_{\mathrm{Eucl}}$ and $\phi, \psi \in \mathrm{Isom}(\mathbb{E}^n)$ such that $d_O(\phi, \mathbbm{1}_{\mathrm{E}^n}), d_O(\psi, \mathbbm{1}_{\mathrm{E}^n}) < \frac{1}{4r}$ and $[\phi(T)]_{B_{2r}(O)} = [\psi(T^\prime]_{B_{2r}(O)}$. Consequently,
\[ d_{O^\prime}(\phi, \mathbbm{1}_{\mathrm{E}^n}), d_{O^\prime}(\psi, \mathbbm{1}_{\mathrm{E}^n}) < \frac{C}{4r} < \frac{C}{2r}\]
and
\[ [\phi(T)]_{B_r(O^\prime)} = [\psi(T^\prime]_{B_r(O^\prime)}. \]
$C>1$ implies $B_{\frac{r}{C}}(O^\prime) \subset B_r(O^\prime)$, and hence $R_{O^\prime}(T, T^\prime) \geq \frac{1}{2C} \cdot R_O(T, T^\prime)$. Reversing the r\^oles of $T$ and $T^\prime$ we conclude $R_{O^\prime}(T, T^\prime) \leq 2C \cdot R_O(T, T^\prime)$ if $R_{O^\prime}(T, T^\prime) > 2 \cdot \norm{\overrightarrow{OO^\prime}}_{\mathrm{Eucl}}$. This shows the comparability of the metrics $d_O$ and $d_{O^\prime}$ for small distances, hence the induced topologies are equal.
\end{proof}

\noindent Next, we show that a tiling moved by an isometry close to the identity map is close to the original tiling, in terms of the metric $d_O$.
\begin{prop} \label{cont-mov-til-prop}
If $T$ is a simple tiling of $\mathbb{E}^n$ and $\phi_k \in \mathrm{Isom}(\mathbb{E}^n)$ are isometries such that $\lim_{k \rightarrow \infty} d_O(\phi_k, \mathbbm{1}_{\mathbb{E}^n}) = 0$ then also
\[ \lim_{k \rightarrow \infty} d_O(\phi_k(T), T) = 0. \]
\end{prop}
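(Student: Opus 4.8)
The plan is to show that for any prescribed radius $r > 0$, once $k$ is large enough we can witness $[\phi_k(T)]_{B_r(O)} = [\mathbbm{1}_{\mathbb{E}^n}(T)]_{B_r(O)}$ using isometries close enough to the identity, which forces $R(\phi_k(T), T) \geq r$ and hence $d_O(\phi_k(T), T) \leq \ln(1 + 1/r)$; letting $r \to \infty$ then gives the claim. The natural candidate witnessing isometries are $\phi = \mathbbm{1}_{\mathbb{E}^n}$ applied to $\phi_k(T)$ and $\psi = \phi_k^{-1}$ applied to $T$: indeed $\psi(T) = \phi_k^{-1}(T)$ and $\mathbbm{1}(\phi_k(T)) = \phi_k(T)$, but these are not equal as tilings, so this naive choice does not immediately give equality of patches on $B_r(O)$. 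Instead I would take $\phi := \phi_k^{-1}$ and $\psi := \mathbbm{1}_{\mathbb{E}^n}$: then $\phi(\phi_k(T)) = \phi_k^{-1}\phi_k(T) = T = \psi(T)$, so the patches agree on all of $\mathbb{E}^n$, in particular on $B_r(O)$.

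The remaining point is the size condition $d_O(\phi_k^{-1}, \mathbbm{1}_{\mathbb{E}^n}) < \frac{1}{2r}$. By Lemma~\ref{d_O-lem}(\ref{inv-met-for}) this equals $d_O(\phi_k, \mathbbm{1}_{\mathbb{E}^n})$, which tends to $0$ by hypothesis. Hence for every fixed $r$ there is $K_r$ so that $k \geq K_r$ implies $d_O(\phi_k^{-1}, \mathbbm{1}_{\mathbb{E}^n}) = d_O(\phi_k, \mathbbm{1}_{\mathbb{E}^n}) < \frac{1}{2r}$, and then the pair $(\phi, \psi) = (\phi_k^{-1}, \mathbbm{1}_{\mathbb{E}^n})$ witnesses $R(\phi_k(T), T) \geq r$ in Definition~\ref{dist-tilings-def}. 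Consequently $d_O(\phi_k(T), T) \leq \min\{\ln(3/2), \ln(1 + \tfrac{1}{r})\}$ for all $k \geq K_r$.

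Given $\varepsilon > 0$, choose $r$ large enough that $\ln(1 + \tfrac{1}{r}) < \varepsilon$; then $d_O(\phi_k(T), T) < \varepsilon$ for all $k \geq K_r$, which is exactly $\lim_{k\to\infty} d_O(\phi_k(T), T) = 0$. The only mild subtlety — and the "obstacle", though it is a light one — is realizing that one should push both tilings back to a common tiling rather than trying to match $\phi_k(T)$ and $T$ directly on a large ball; once $\psi = \mathbbm{1}_{\mathbb{E}^n}$ and $\phi = \phi_k^{-1}$ are chosen this way the patch equality is trivial and only the metric bound, handled by Lemma~\ref{d_O-lem}(\ref{inv-met-for}), needs checking. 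No properties of simplicity of $T$ or of the tiles are actually needed beyond the definition of $d_O$.
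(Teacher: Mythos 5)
Your proof is correct, and it is genuinely simpler than the one in the paper. You exploit the fact that the two witnessing isometries in Definition~\ref{dist-tilings-def} need not be distributed symmetrically: choosing $\phi = \phi_k^{-1}$ on $\phi_k(T)$ and $\psi = \mathbbm{1}_{\mathbb{E}^n}$ on $T$ makes the two moved tilings literally identical, so the patch equality on $B_r(O)$ is automatic, and the only thing to check is the size bound $d_O(\phi_k^{-1},\mathbbm{1}_{\mathbb{E}^n}) = d_O(\phi_k,\mathbbm{1}_{\mathbb{E}^n}) < \tfrac{1}{2r}$ via Lemma~\ref{d_O-lem}(\ref{inv-met-for}); this gives $R(\phi_k(T),T) \geq \tfrac{1}{2\,d_O(\phi_k,\mathbbm{1}_{\mathbb{E}^n})}$ and hence the claimed limit. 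The paper instead decomposes $\phi_k = \tau_k\alpha_k$ into a translation and an orthogonal map, uses the triangle inequality to treat $d_O(\tau_k\alpha_k(T),\alpha_k(T))$ and $d_O(\alpha_k(T),T)$ separately, splits the translation into the two halves $\pm\tau_k/2$ applied to the two tilings, and for the orthogonal part invokes the square-root Lemma~\ref{sqrt-lem} to produce $\beta_k$ with $\beta_k^2 = \alpha_k$, applying $\beta_k^{-1}$ to one tiling and comparing with $\beta_k$ applied to the other. That symmetric splitting only improves the lower bound on $R$ by a constant factor, which is irrelevant for the limit, so your asymmetric choice loses nothing here; what it buys is a shorter argument that avoids both the decomposition and Lemma~\ref{sqrt-lem} entirely. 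Your closing remark is also accurate: nothing about simplicity of $T$ is used beyond the fact that $d_O$ is defined on such tilings.
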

\begin{proof}
Write $\phi_k = \tau_k \alpha_k$, where $\tau_k \in \mathrm{Trans}(\mathbb{E}^n)$ and $\alpha_k \in O(\mathbb{E}^n_O)$. By assumption
\[ \norm{\tau}_{\mathrm{Eucl}} = d_O(\tau_k, \mathbbm{1}_{\mathbb{E}^n}) \rightarrow 0\ \mathrm{and}\ \norm{\alpha_k}_{\mathrm{op}} = d_O(\alpha_k, \mathbbm{1}_{\mathbb{E}^n}) \rightarrow 0 \]
when $k \rightarrow \infty$. The triangle inequality implies that
\[ d_O(\phi_k(T), T) = d_O(\tau_k \alpha_k(T), T) \leq d_O(\tau_k(\alpha_k(T)), \alpha_k(T)) + d_O(\alpha_k(T), T). \]
Hence it is enough to show $d_O(\tau_k(\alpha_k(T)), \alpha_k(T)) \stackrel{k \rightarrow \infty}{\longrightarrow} 0$ and $d_O(\alpha_k(T), T) \stackrel{k \rightarrow \infty}{\longrightarrow} 0$.

\noindent To verify the first limit consider only $k$ large enough such that $d_O(\tau_k, \mathbbm{1}_{\mathbb{E}^n}) = \norm{\tau_k} < 2$, and set $\psi_k := - \tau_k/2$, $\chi_k := \tau_k/2$. Then for $r < \frac{1}{d_O(\tau_k, \mathbbm{1}_{\mathbb{E}^n})}$ we have $d_O(\psi_k, \mathbbm{1}_{\mathbb{E}^n}) < \frac{1}{2r}$, $d_O(\chi_k, \mathbbm{1}_{\mathbb{E}^n}) < \frac{1}{2r}$ and $[\psi_k(\tau_k(\alpha_k(T)))]_{B_r(O)} = [\chi_k(\alpha_k(T))]_{B_r(O)}$. By definition this implies $R(\tau_k(\alpha_k(T)), T) \geq \frac{1}{d_O(\tau_k, \mathbbm{1}_{\mathbb{E}^n})}$, hence
\[ d_O(\tau_k(\alpha_k(T)), \alpha_k(T)) \leq \ln(1+ d_O(\tau_k, \mathbbm{1}_{\mathbb{E}^n})) \rightarrow 0 \]
when $k \rightarrow \infty$.

\noindent For the second limit, only consider $k$ large enough such that there is a square root $\beta_k$ of $\alpha_k$ as constructed in Lem.~\ref{sqrt-lem}. If $\gamma_k := \beta_k^{-1}$ is the inverse, Lem.~\ref{sqrt-lem} together with Lem.~\ref{d_O-lem}(\ref{inv-met-for}) implies
\[ \norm{\gamma_k - \mathbbm{1}_{\mathbb{E}^n}}_{\mathrm{op}} =
    \norm{\beta_k - \mathbbm{1}_{\mathbb{E}^n}}_{\mathrm{op}} \leq
    \norm{\alpha_k - \mathbbm{1}_{\mathbb{E}^n}}_{\mathrm{op}}. \]
$\gamma_k \cdot \alpha_k = \beta_k$ implies for any $R>0$ that $[\gamma_k(\alpha_k(T))]_{B_R(O)} = [\beta_k(T)]_{B_R(O)}$. Therefore, $R(\alpha_k(T), T) \rightarrow \infty$ if $\norm{\alpha_k - \mathbbm{1}_{\mathbb{E}^n}}_{\mathrm{op}} \rightarrow 0$ for $k \rightarrow \infty$, hence $d_O(\alpha_k(T), T) \rightarrow 0$, as required.
\end{proof}

\begin{Def}
A \textit{tiling space} $\Omega$ is a set of simple tilings of $\mathbb{E}^n$ made up of the same set of prototiles, finitely many up to isometries, such that
\begin{enumerate}
\item $\Omega$ is closed under isometries, that is, for all isometries $\phi \in \mathrm{Isom}(\mathbb{E}^n)$ and $T \in \Omega$, we also have $\phi(T) \in \Omega$, and
\item $\Omega$ is complete under the metric $d_O$ on the space of all simple tilings of $\mathbb{E}^n$.
\end{enumerate}
\end{Def}

\noindent For every set of prototiles, finite up to isometries, there is a maximal tiling space:
\begin{lem}
The set $\Omega_{\mathcal{P}}$ of all simple tilings of $\mathbb{E}^n$ made up of the same set $\mathcal{P}$ of prototiles, finitely many up to isometries, is a tiling space.
\end{lem}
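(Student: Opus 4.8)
The plan is to verify the two defining properties of a tiling space for $\Omega_{\mathcal P}$: invariance under $\mathrm{Isom}(\mathbb E^n)$ and completeness with respect to $d_O$. Invariance is immediate: if $T \in \Omega_{\mathcal P}$ and $\phi \in \mathrm{Isom}(\mathbb E^n)$, then $\phi(T)$ is again a tiling of $\mathbb E^n$ (as noted after the definition of tilings), and each of its tiles is an isometric image $\phi\phi_t(t_i)$ of one of the prototiles $t_1,\dots,t_r$ of $\mathcal P$; hence $\phi(T)$ is simple with the same set $\mathcal P$ of prototiles, so $\phi(T) \in \Omega_{\mathcal P}$. The fact that $\mathcal P$ is finite up to isometries is part of the hypothesis and is preserved.

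The substantive part is completeness. I would take a Cauchy sequence $(T_k)$ in $\Omega_{\mathcal P}$ (with respect to $d_O$) and construct a limit tiling $T \in \Omega_{\mathcal P}$. The idea is the standard one for tiling spaces: for each radius $r$ the patches $[\,\psi_k(T_k)\,]_{B_r(O)}$, after correcting $T_k$ by an isometry $\psi_k$ close to the identity, eventually stabilize, and one glues these stabilized patches over an exhausting sequence of balls $B_1(O) \subset B_2(O) \subset \cdots$ to obtain a tiling of all of $\mathbb E^n$. Concretely: since $(T_k)$ is Cauchy, for each $m \in \mathbb N$ there is an index $k_m$ such that for $k,l \ge k_m$ we have $d_O(T_k,T_l) < \ln(1+\tfrac{1}{2m})$, whence $R(T_k,T_l) > 2m$ and there are isometries with $d_O$-distance $< \tfrac{1}{4m}$ from the identity matching the two tilings on $B_{2m}(O)$. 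One uses these comparisons, together with the composition estimates in Lemma~\ref{d_O-lem} (items~(\ref{prod-id-for}), (\ref{met-conj-bd-for})) exactly as in the proof of the triangle inequality above, to produce for each $m$ a single isometry $\psi_m$ with $d_O(\psi_m,\mathbbm 1_{\mathbb E^n})$ small and a patch $P_m$ of tiles from $\mathcal P$ such that $[\psi_m(T_k)]_{B_m(O)} = P_m$ for all large $k$, and such that $P_{m+1}$ restricted to $B_m(O)$ agrees with $P_m$ up to an isometry close to the identity; passing to a subsequence of the $\psi_m$ (using that the orthogonal parts live in the compact group $O(\mathbb E^n_O)$ and the translation parts are bounded) one arranges the $P_m$ to be genuinely nested, and sets $T := \bigcup_m P_m$. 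Then $T$ is a tiling of $\mathbb E^n$: its tiles cover $\mathbb E^n$ because they cover every $B_m(O)$, and any two tiles meet face-to-face because they already do so inside some $P_m$. All tiles of $T$ are isometric images of prototiles in $\mathcal P$, so $T \in \Omega_{\mathcal P}$, and by construction $d_O(T_k,T) \to 0$.

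The main obstacle is bookkeeping the non-commutativity of $\mathrm{Isom}(\mathbb E^n)$ while taking the limit: the correcting isometries $\psi_m$ for different radii need not be compatible, and one must compose and conjugate them carefully so that the radii of agreement do not shrink, which is precisely where the multiplicative (rather than additive) error terms of Lemma~\ref{d_O-lem}(\ref{met-conj-bd-for}) and~(\ref{met-prod-for}) come into play, just as in the triangle-inequality argument. A convenient shortcut that I would use is Proposition~\ref{cont-mov-til-prop}: it lets one replace "agreement after applying isometries close to the identity" by honest convergence once the correcting isometries are shown to converge, so that the combinatorial gluing of nested patches is cleanly separated from the metric estimates. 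Once the limit tiling is in hand, checking it lies in $\Omega_{\mathcal P}$ is routine from the definitions.
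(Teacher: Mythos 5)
Your overall plan (correct each $T_k$ by a small isometry and glue the corrected central patches over a growing sequence of balls) is the same as the paper's, and the isometry-invariance part is indeed immediate. However, the key intermediate claim in your completeness argument is false as stated: in general there is no single isometry $\psi_m$, independent of $k$, and no fixed patch $P_m$ with $[\psi_m(T_k)]_{B_m(O)} = P_m$ for all large $k$. Take for instance the standard lattice tiling $T$ and $T_k := T + \frac{1}{k}v$ for a fixed nonzero vector $v$: this is a Cauchy sequence in $\Omega_{\mathcal P}$, but for any fixed $\psi_m$ the tilings $\psi_m(T_k)$ and $\psi_m(T_{k'})$ differ by the nonzero translation $\frac{1}{k}v-\frac{1}{k'}v$, so their patches over $B_m(O)$ are distinct for all distinct large $k, k'$ and never stabilize. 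The correcting isometries must depend on $k$ as well as on $m$, and the entire difficulty of the proof is choosing them compatibly across the different radii so that the corrected patches become genuinely nested.

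That compatibility is exactly what your sketch leaves out. Saying that $P_{m+1}$ agrees with $P_m$ on $B_m(O)$ ``up to an isometry close to the identity'' and then passing to a subsequence via compactness of $O(\mathbb{E}^n_O)$ and boundedness of the translation parts only yields a convergent subsequence of correctors; it does not upgrade approximate agreement of patches to exact nestedness, since each extraction changes the correctors and hence the patches again. The paper resolves this by telescoping: from the Cauchy property it obtains, for consecutive indices, isometries $\phi_m, \phi_m'$ with $d_O(\phi_m,\mathbbm{1}_{\mathbb{E}^n}), d_O(\phi_m',\mathbbm{1}_{\mathbb{E}^n}) < \frac{s_m}{2}$, $\sum_m s_m < \infty$, matching $T_m$ and $T_{m+1}$ on $B_{1/s_m}(O)$, then defines the corrector of $T_m$ as the infinite composition $\delta_m = \prod_{k=m}^{\infty}\phi_k^{\prime -1}\phi_k$, whose convergence is guaranteed by Lem.~\ref{d_O-lem}(\ref{inv-met-for}), (\ref{prod-id-for}), (\ref{met-prod-for}); the ball inclusion $B_{1/s_m}(\phi_m^{\prime -1}(O)) \subset B_{1/s_{m+1}}(\phi_{m+1}^{-1}(O))$ then makes the patches $\delta_m([T_m]_{B_{1/s_m}(\phi_m^{-1}(O))})$ literally nested, their union is the limit tiling, and $d_O(T_m,T)\to 0$ follows from $\delta_m \to \mathbbm{1}_{\mathbb{E}^n}$. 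Your appeal to Prop.~\ref{cont-mov-til-prop} is legitimate as an auxiliary tool (its estimates are uniform in the tiling), but it cannot substitute for this construction: to complete your argument you would need to carry out some version of the telescoping (or an equivalent diagonal choice of $k$-dependent correctors), which is the substantive content of the paper's proof.
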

\begin{proof}
Suppose $(T_m)_{m \in \mathbb{N}}$ is a Cauchy sequence of simple tilings of $\mathbb{E}^n$. Let $s_m$ be the real number such that $\ln(1+s_m) = d_O(T_m, T_{m+1})$. Passing to a subsequence if necessary we can assume that the sequence $(s_m)_{m \in \mathbb{N}}$ is decreasing and $\sum_m s_m < \infty$.

\noindent By definition of $d_O$ there exist $\phi_m, \phi^\prime_m \in \mathbb{E}^n$ such that $d_O(\phi_m, \mathbbm{1}_{\mathbb{E}^n}), d_O(\phi^\prime_m, \mathbbm{1}_{\mathbb{E}^n}) < \frac{s_m}{2}$ and
\[ [\phi_m(T_m)]_{B_{\frac{1}{s_m}(O)}} = [\phi^\prime_m(T_{m+1})]_{B_{\frac{1}{s_m}(O)}}.   \]
Now, $[\phi(T)]_{B_r(O)} = \phi([T]_{B_r(\phi^{-1}(O))})$ for any simple tiling $T$ of $\mathbb{E}^n$, origin $O \in \mathbb{E}^n$, radius $r > 0$ and isometry $\phi \in \mathrm{Isom}(\mathbb{E}^n)$. Hence ($\ast$) is equivalent to
\[ \phi_m([T_m]_{B_{\frac{1}{s_m}}(\phi_m^{-1}(O))}) = \phi^\prime_m([T_{m+1}]_{B_{\frac{1}{s_m}}(\phi_m^{\prime -1}(O))}) \]
or
\[ \phi^{\prime -1}_m \phi_m([T_m]_{B_{\frac{1}{s_m}}(\phi_m^{-1}(O))}) = [T_{m+1}]_{B_{\frac{1}{s_m}}(\phi_m^{\prime -1}(O))}.
   \leqno{(\ast)}\]
Next, the inclusion
\[ B_{\frac{1}{s_m}}(\phi^{\prime -1}_m(O)) \subset B_{\frac{1}{s_{m+1}}}(\phi^{-1}_{m+1}(O)) \leqno{(\ast\ast)}\]
holds because for all $x \in B_{\frac{1}{s_m}}(\phi^{\prime -1}_m(O))$ we have
\begin{eqnarray*}
\norm{x-\phi_{m+1}^{-1}(O)}_{\mathrm{Eucl}} & \leq & \norm{x-\phi_m^{\prime -1}(O)}_{\mathrm{Eucl}} +
          \norm{\phi_m^{\prime -1}(O) - O}_{\mathrm{Eucl}} + \norm{\phi_{m+1}^{-1}(O) - O}_{\mathrm{Eucl}}  \\
 & \leq & \frac{1}{s_m} + \frac{1}{2}s_m + \frac{1}{2}s_{m+1} \leq \frac{1}{s_{m+1}}
\end{eqnarray*}
using the assumptions on $\phi_m^{\prime -1}$ and $\phi_{m+1}$ and by possibly passing to a further subsequence of $(s_m)_{m \in \mathbb{N}}$.

\noindent Define the isometry $\delta_m := \prod_{k=m}^\infty \phi_k^{\prime -1} \phi_k$ where the factors with lower index are to the right. This infinite composition exists because $d_O(\phi_k, \mathbbm{1}_{\mathbb{E}^n}), d_O(\phi^\prime_k, \mathbbm{1}_{\mathbb{E}^n}) < \frac{s_k}{2}$ and $\sum_k s_k < \infty$, hence by Lem.~\ref{d_O-lem}(\ref{inv-met-for}) and (\ref{prod-id-for}) we have $d_O(\delta_{m,M}, \mathbbm{1}_{\mathbb{E}^n}) < 2 \cdot \sum_k s_k$ for $\delta_{m,M} = \prod_{k=m}^M \phi_k^{\prime -1} \phi_k$, and Lem.~\ref{d_O-lem}(\ref{met-prod-for}) implies that $(\delta_{m,M})_{M \in \mathbb{N}}$ is a Cauchy sequence in $\mathrm{Isom}(\mathbb{E}^n)$. This also shows that $\delta_m \rightarrow \mathbbm{1}_{\mathbb{E}^n}$ when $m \rightarrow \infty$. Then
\begin{eqnarray*}
\delta_m([T_m]_{B_{\frac{1}{s_m}}(\phi_m^{-1}(O))}) & = &
\delta_{m+1} \phi_m^{\prime -1} \phi_m([T_m]_{B_{\frac{1}{s_m}}(\phi_m^{-1}(O))}) \\
 & = & \delta_{m+1}([T_{m+1}]_{B_{\frac{1}{s_m}}(\phi_m^{\prime -1}(O))})\ \mathrm{by}\ (\ast) \\
 & \subset & \delta_{m+1}([T_{m+1}]_{B_{\frac{1}{s_{m+1}}}(\phi_{m+1}^{-1}(O))})\ \mathrm{by}\ (\ast\ast).
\end{eqnarray*}
Hence $T := \bigcup_{m=1}^\infty \delta_m([T_m]_{B_{\frac{1}{s_m}}(\phi_m^{-1}(O))})$ is a simple tiling of $\mathbb{E}^n$ made up of the same prototiles as the $T_m$.

\noindent To prove the lemma it is enough to show $d_O(T_m, T) \rightarrow 0$ when $m \rightarrow \infty$. To this purpose choose a sequence $(t_m)_{m \in \mathbb{N}}$ converging to $0$ such that $d_O(\delta_m, \mathbbm{1}_{\mathbb{E}^n}) < \frac{1}{2}t_m$ and $\frac{1}{t_m} \leq \frac{1}{s_m} - \norm{\delta_m\phi_m^{-1}(O)-O}_{\mathrm{Eucl}}$, hence $B_{\frac{1}{t_m}}(O) \subset B_{\frac{1}{s_m}}(\delta_m\phi_m^{-1}(O))$.

\noindent Since by construction, $[T]_{B_{\frac{1}{s_m}}(\delta_m\phi_m^{-1}(O))} = [\delta_m(T_m)]_{B_{\frac{1}{s_m}}(\delta_m\phi_m^{-1}(O))}$ this implies
\[ [T]_{B_{\frac{1}{t_m}}(O)} = [\delta_m(T_m)]_{B_{\frac{1}{t_m}}(O)}, \]
and together with $d_O(\delta_m, \mathbbm{1}_{\mathrm{E}^n}) \leq \frac{1}{2}t_m$ we conclude that
\[ d_O(T_m, T) \leq \ln(1 + t_m) \stackrel{m \rightarrow \infty}{\longrightarrow} 0. \]
\end{proof}

\noindent For every simple tiling there is a minimal tiling space containing it:
\begin{Def}
Let $T$ be a simple tiling of $\mathbb{E}^n$. The \textit{orbit} of $T$ is defined as the set of copies of $T$ moved by isometries,
\[ O(T) := \{\phi(T): \phi \in \mathrm{Isom}(\mathbb{E}^n)\}. \]
The \textit{hull} $\Omega_T$ is the closure of the orbit $O(T)$ in the topological space of all simple tilings.
\end{Def}

\noindent In \cite[p.7p]{Sad08} there are interesting $1$-dimensional examples when the orbit of a simple tiling fails to be closed in the space of all simple tilings.

\begin{thm}
The hull $\Omega_T$ of a simple tiling $T$ of $\mathbb{E}^n$ is compact.
\end{thm}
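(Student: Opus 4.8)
The plan is to use sequential compactness: I would show that every sequence $(T_m)_{m \in \mathbb{N}}$ of tilings in $\Omega_T$ has a subsequence converging in $\Omega_T$ to some tiling. Since $\Omega_T$ is by definition closed in the (complete) space of all simple tilings, it suffices to extract a Cauchy subsequence, because the limit will then automatically lie in $\Omega_T$.

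First I would reduce to the orbit $O(T)$ itself: by a diagonal argument it is enough to handle sequences $(\phi_m(T))_{m}$ with $\phi_m \in \mathrm{Isom}(\mathbb{E}^n)$, since a general sequence in $\Omega_T$ can be approximated to within $1/m$ by such, and a Cauchy subsequence of the approximants forces a Cauchy subsequence of the original. The key point is \emph{local finiteness}: because $T$ is simple it has finitely many prototiles up to isometry, and (after applying an isometry moving the relevant tile near $O$) there are, for each fixed $R$, only finitely many patches $[\psi(\phi_m(T))]_{B_R(O)}$ up to isometry, where $\psi$ is chosen with $\psi(\phi_m(T))$ having a tile containing $O$. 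This is really the statement that a simple tiling has finite local complexity with respect to isometries; I would deduce it from the finiteness of the prototile set together with the facet-to-facet condition, which bounds how many tiles can meet any bounded region and in how many combinatorial configurations.

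Next, the diagonal extraction: for $R = 1$ pass to a subsequence along which the normalized patches $[\psi_m(\phi_m(T))]_{B_1(O)}$ are all isometric to a single patch; then refine for $R = 2, 3, \ldots$, and take the diagonal subsequence. Along this subsequence, for every $r$ there exist isometries $\psi_m, \psi_{m'}$ (the normalizing isometries, composed with the isometry matching the two patches) with $[\psi_m(\phi_m(T))]_{B_r(O)} = [\psi_{m'}(\phi_{m'}(T))]_{B_r(O)}$, and one can arrange these isometries to be close to $\mathbbm{1}_{\mathbb{E}^n}$ in $d_O$; one has to be slightly careful that the normalizing isometries themselves stay in a compact set, which follows since a tile of $\phi_m(T)$ meets $O$, so the normalizing isometry is bounded in $d_O$ by the prototile diameters. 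This gives $d_O(T_m, T_{m'}) \to 0$, i.e.\ the diagonal subsequence is Cauchy.

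The main obstacle I expect is \textbf{bookkeeping with the non-commutative isometry group in the metric}: in the translation-only setting the patch-matching isometries compose additively, but here one must compose orthogonal parts and translated parts and keep all of $d_O(\psi_m, \mathbbm{1}_{\mathbb{E}^n})$ small, which requires repeated use of Lemma~\ref{d_O-lem}, especially parts (\ref{met-conj-bd-for}), (\ref{met-prod-for}) and (\ref{met-comp-for}), much as in the completeness proof above. A secondary subtlety is proving finite local complexity up to isometries rather than translations; the facet-to-facet hypothesis is exactly what makes this work, since it rigidifies how prototiles can be fitted together, so only finitely many seeds $[\psi(\phi_m(T))]_{B_R(O)}$ occur up to isometry. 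Once both points are in hand, the compactness follows by the standard argument.
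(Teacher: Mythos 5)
Your proposal is correct and follows essentially the same route as the paper, which simply cites Sadun's Theorem~1.1 and notes that its proof (finite local complexity plus a diagonal extraction of matching patches, with the matching maps pushed close to the identity) carries over when translations are replaced by isometries. Your sketch is in effect that adapted argument written out, including the two points the adaptation really needs: finite local complexity up to isometries from the facet-to-facet condition, and control of the normalizing isometries via Lemma~\ref{d_O-lem}.
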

\begin{proof}
The proof of \cite[Thm.1.1]{Sad08} still works when translations are replaced by isometries.
\end{proof}

\section{Equivalence relations on simple tilings} \label{equiv-tilings-sec}

\subsection{Topologically conjugated tiling spaces}
Let $\gamma$ be an isometry of $\mathbb{E}^n$.

\begin{Def}
For tiling spaces $\Omega$ and $\Omega^\prime$ of simple tilings of $\mathbb{E}^n$, a continuous map $f: \Omega \rightarrow \Omega^\prime$ is called a $\gamma$-\textit{factor map} if for all tilings $T \in \Omega$ and isometries $\phi \in \mathrm{Isom}(\mathbb{E}^n)$ we have
\[ f(\phi(T)) = (\gamma \phi \gamma^{-1})(f(T)). \]
If $f$ is also a homeomorphism, $f$ is called a \textit{topological conjugacy}.
\end{Def}

\noindent Originally, factor maps were defined only requiring that $\phi$ is a translation, and letting $\gamma$ be the identity. However, if we extend the range of $\phi$ to arbitrary isometries then only requiring $f(\phi(T)) = \phi(f(T))$ would imply that some continuous maps between tiling spaces that are factor maps with respect to translations are not any longer factor maps with respect to arbitrary isometries. The reason is that translations commute whereas general isometries do not. The following example and Prop.~\ref{shift-topcon-prop} show how conjugating with an appropriate isometry $\gamma$ resolves this problem.

\begin{exm} \label{top-eq-ex}
Consider the hull $\Omega_T$ of the standard lattice tiling $T$ of the plane $\mathbb{E}^2$ whose tiles are the squares with vertices $(n,m), (n+1,m), (n,m+1), (n+1, m+1)$ for integers $n,m$ (see~Prop.~\ref{cryst-hull-prop} and Ex.~\ref{cryst-til-ex} for a more detailed description of $\Omega_T$). Let $\gamma$ be a translation of $\mathbb{E}^2$ by a vector $v \in \mathbb{R}^2$ and
\[ f: \Omega_T \rightarrow \Omega_T,\ T^\prime \mapsto \gamma(T^\prime) = T^\prime + v \]
a map of $\Omega_T$ onto itself.

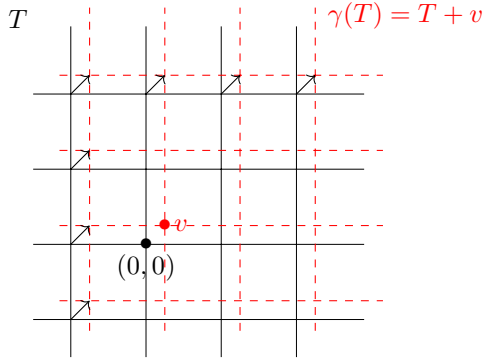
\begin{figure}[h!  ]
\begin{center}
\begin{tikzpicture}[ x=1cm,y=1cm]
\draw[step=1cm,black,very thin] (-1.5,-1.5) grid (2.9,2.9) node at (-1.7,3) {$T$};
\draw[ step=1cm ,red,dashed,shift={(.25,.25)}] (-1.4  ,-1.4 ) grid (2.9,2.9) node at (3.2, 2.8){$\gamma(T)=T+v$};
\draw[step=1 cm,black,thin] (0,0)   node [black] at (0,0) {\textbullet} node[below]{$(0,0)$};
\draw[red] (0.25,.25) node [red] at (.25,.25) {\textbullet} node[right]{$v$};
\draw [->] (-1,0) -- (-.75,0.25);
\draw [->] (-1,1) -- (-.75,1.25);
\draw [->] (-1,2) -- (-.75,2.25);
\draw [->] (-1,-1) -- (-.75,-.75);
\draw [->] (0,2) -- (.25,2.25);
\draw [->] (1,2) -- (1.25,2.25);
\draw [->] (2,2) -- (2.25,2.25);
 \end{tikzpicture}
\end{center}
\caption{ Translated standard lattice tiling in $\mathbb{E}^2$ .}
 \label{fig5}
\end{figure}

\noindent $f$ is a factor map with respect to translations since
\[ f(T'+w)=T'+w+v=f(T')+w. \]
On the other hand $f(\phi(T)) = \phi(T) + v = T + v \neq \phi(T+v) = \phi(f(T))$, if $\phi$ is a counter clockwise rotation by $90^\circ$ around (0,0) and $v$ is suffciently small. But if we allow for an additional isometry on the affine space $\mathbb{E}^2$ (in our case, just $\gamma$), $f$ becomes again a topological conjugacy: For any isometry $\phi$ of $\mathbb{E}^2$,
\[ f(\phi(T)) = \phi(T) + v = \phi(T + v - v) + v = (\gamma\phi\gamma^{-1})(f(T)). \]
\end{exm}

\begin{prop} \label{shift-topcon-prop}
For any isometry $\phi \in \mathrm{Isom}(\mathbb{E}^n)$,
the map $f: \Omega \rightarrow \Omega$ on a space $\Omega$ of tilings of $\mathbb{E}^n$ given by $f(T) := \phi(T)$ for all $T \in \Omega$, is a topological conjugacy.
\end{prop}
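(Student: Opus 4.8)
The plan is to verify the three things that make $f$ a topological conjugacy with $\gamma = \phi$: that $f$ maps $\Omega$ onto $\Omega$, that it satisfies the $\phi$-factor identity, and that it is a homeomorphism. The first two are immediate. Since $\Omega$ is closed under isometries, $\phi(\Omega) \subseteq \Omega$ and also $\phi^{-1}(\Omega) \subseteq \Omega$, so $\Omega = \phi(\phi^{-1}(\Omega)) \subseteq \phi(\Omega)$, i.e. $f(\Omega) = \Omega$, and $f$ is a bijection of $\Omega$ with inverse $T \mapsto \phi^{-1}(T)$. For the factor identity, for every isometry $\psi$ and every $T \in \Omega$ one has $f(\psi(T)) = \phi\psi(T) = (\phi\psi\phi^{-1})(\phi(T)) = (\gamma\psi\gamma^{-1})(f(T))$ with $\gamma = \phi$, exactly as in the last display of Example~\ref{top-eq-ex}.

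The real content is continuity, and for this I would prove the sharper statement that $f$ is an isometry from $(\mathcal{T}_n, d_O)$ to $(\mathcal{T}_n, d_{\phi(O)})$, equivalently that $R_{\phi(O)}(\phi(T), \phi(T')) = R_O(T, T')$ for all simple tilings $T, T'$. Fix $r$ with $r < R_O(T, T')$ and choose, as in Definition~\ref{dist-tilings-def}, isometries $\psi, \chi$ with $d_O(\psi, \mathbbm{1}_{\mathbb{E}^n}), d_O(\chi, \mathbbm{1}_{\mathbb{E}^n}) < \tfrac{1}{2r}$ and $[\psi(T)]_{B_r(O)} = [\chi(T')]_{B_r(O)}$. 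Applying $\phi$ and using the identity $[\phi(S)]_{B_r(\phi(O))} = \phi([S]_{B_r(O)})$ (valid because $\phi$ is an isometry, hence $\phi(B_r(O)) = B_r(\phi(O))$) together with $\phi(\psi(T)) = (\phi\psi\phi^{-1})(\phi(T))$, I obtain $[(\phi\psi\phi^{-1})(\phi(T))]_{B_r(\phi(O))} = [(\phi\chi\phi^{-1})(\phi(T'))]_{B_r(\phi(O))}$. Writing $\phi = \sigma_O\alpha_O$ with $\sigma_O \in \mathrm{Trans}(\mathbb{E}^n)$ and $\alpha_O \in O(\mathbb{E}^n_O)$, so that $\phi(O) = \sigma_O(O)$ and $\mathbbm{1}_{\mathbb{E}^n} = \phi\,\mathbbm{1}_{\mathbb{E}^n}\,\phi^{-1}$, Lemma~\ref{d_O-lem}(\ref{conj-met-for}) gives $d_{\phi(O)}(\phi\psi\phi^{-1}, \mathbbm{1}_{\mathbb{E}^n}) = d_O(\psi, \mathbbm{1}_{\mathbb{E}^n}) < \tfrac{1}{2r}$ and likewise for $\chi$. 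Hence $r \le R_{\phi(O)}(\phi(T), \phi(T'))$, and since $r < R_O(T, T')$ was arbitrary, $R_{\phi(O)}(\phi(T), \phi(T')) \ge R_O(T, T')$; applying the same reasoning to $\phi^{-1}$ with base point $\phi(O)$ in place of $O$ yields the reverse inequality, so equality holds and therefore $d_{\phi(O)}(\phi(T), \phi(T')) = d_O(T, T')$.

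Finally I would conclude as follows: $f$ is an isometry $(\mathcal{T}_n, d_O) \to (\mathcal{T}_n, d_{\phi(O)})$, and by Proposition~\ref{indep-top-met-prop} the metrics $d_O$ and $d_{\phi(O)}$ induce the same topology on $\mathcal{T}_n$, so $f$ is continuous for that topology; the same argument applied to $\phi^{-1}$ shows $f^{-1}$ is continuous. Restricting to the closed, isometry-invariant set $\Omega$, the map $f|_\Omega\colon \Omega \to \Omega$ is a continuous bijection with continuous inverse, hence a homeomorphism, and together with the factor identity this makes it a topological conjugacy. The one genuinely delicate point is the second paragraph: a naive attempt to keep everything based at $O$ shrinks the radius of guaranteed agreement by an amount comparable to $\norm{\sigma_O}_{\mathrm{Eucl}}$ and so fails to force $R \to \infty$; passing to the base point $\phi(O)$, invoking the conjugation-invariance of $d$ in Lemma~\ref{d_O-lem}(\ref{conj-met-for}), and then cashing this in through the $O$-independence of the topology is what makes the estimate go through.
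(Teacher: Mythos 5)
Your proposal is correct and follows essentially the same route as the paper: both pass the patch equality through $\phi$ to the base point $\phi(O)$, use Lemma~\ref{d_O-lem}(\ref{conj-met-for}) to control $d_{\phi(O)}(\phi\psi\phi^{-1},\mathbbm{1}_{\mathbb{E}^n})$, and then invoke Proposition~\ref{indep-top-met-prop} to conclude continuity of $f$ and of $f^{-1}$. Your packaging of the estimate as an exact isometry $d_{\phi(O)}(\phi(T),\phi(T'))=d_O(T,T')$ is a slightly sharper statement than the paper's $\varepsilon$--$\delta$ argument, but the underlying computation is the same.
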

\begin{proof}
$f$ is a $\phi$-factor map because for all isometries $\psi \in \mathrm{Isom}(\mathbb{E}^n)$,
\[ f(\psi(T)) = \phi(\psi(T)) = \phi\psi\phi^{-1}(\phi(T)) = \phi\psi\phi^{-1}(f(T)).\]

\noindent To show that $f$ is continuous we use the metrics $d_O$ and $d_{\phi(O)}$ constructed on $\Omega$ as in Def.~\ref{dist-tilings-def} using the origins $O, \phi(O) \in \mathbb{E}^n$. Both metrics define the same topology on $\Omega$ by Prop.~\ref{indep-top-met-prop}. Consequently, the continuity of $f$ follows if for all $\epsilon > 0$ there exists $\delta > 0$ such that
\[ d_O(T, T^\prime) < \delta \Rightarrow d_{\phi(O)}(f(T), f(T^\prime)) < \epsilon. \]
To show this, assume $0 < \epsilon < \ln \frac{3}{2}$ and choose $\delta := \epsilon$. By definition, $d_O(T, T^\prime) < \delta$ means that there is $r > \frac{1}{e^\delta - 1}$ there exist $\psi, \rho \in \mathrm{Isom}(\mathbb{E}^n)$ with $d_O(\psi, \mathbbm{1}_{\mathbb{E}^n}), d_O(\rho, \mathbbm{1}_{\mathbb{E}^n}) < \frac{1}{2r}$ such that
\[ [\psi(T)]_{B_r(O)} = [\rho(T^\prime]_{B_r(O)}. \]
This implies $[\phi\psi(T)]_{B_r(\phi(O))} = [\phi\rho(T^\prime)]_{B_r(\phi(O))}$, hence
\[ [\phi\psi\phi^{-1}(f(T))]_{B_r(\phi(O))} = [\phi\rho\phi^{-1}f(T^\prime))]_{B_r(\phi(O))}. \]
Since $d_{\phi(O)}(\phi\psi\phi^{-1}, \mathbbm{1}_{\mathbb{E}^n}) = d_O(\psi, \mathbbm{1}_{\mathbb{E}^n})$ and $d_{\phi(O)}(\phi\rho\phi^{-1}, \mathbbm{1}_{\mathbb{E}^n}) = d_O(\rho, \mathbbm{1}_{\mathbb{E}^n})$ by Lem.~\ref{d_O-lem}(\ref{conj-met-for}), we obtain $d_{\phi(O)}(f(T), f(T^\prime)) < \ln (1 + \frac{1}{r}) < \delta = \epsilon$.

\noindent Finally, $f$ is a one-to-one map on $\Omega$, and its inverse, given by $f^{-1}(T) := \phi^{-1}(T)$, is a $\phi^{-1}$-factor map by the same reasoning as above. Hence $f$ is a topological conjugacy.
\end{proof}

\subsection{Local derivability} \label{LD-ssec}
The strongest equivalence relation between tiling spaces is \textit{mutual local derivability}. The idea is that a map $f$ between tiling space is already completely determined  locally on the tilings.
\begin{Def} \label{LD-space-Def}
Let $\gamma$ be an isometry of $\mathbb{E}^n$. A $\gamma$-factor map $f: \Omega \rightarrow \Omega^\prime$ between tiling spaces $\Omega, \Omega^\prime$ of simple tilings of $\mathbb{E}^n$ is called \textit{locally derivable} (for short, LD), and we say that $\Omega^\prime$ is $\gamma$-\textit{locally derivable from} $\Omega$ if there exists a real number $R > 0$ such that for all $x \in \mathbb{E}^n$ and tilings $T_1, T_2 \in \Omega$,
\[ [T_1]_{B_R(x)} =  [T_2]_{B_R(x)} \Rightarrow  [f(T_1)]_{\{\gamma(x)\}} = [f(T_2)]_{\{\gamma(x)\}}. \]
Then $R$ is called an LD-radius of $f$.

\noindent If furthermore $f$ is a topological conjugacy and $f^{-1}$ is $\gamma^{-1}$-locally derivable then $\Omega$ and $\Omega^\prime$ are said to be \textit{mutually locally derivable} (for short, MLD).
\end{Def}

\begin{rem}
It is enough to check the $\gamma$-LD property of a $\gamma$-factor map $f: \Omega \rightarrow \Omega^\prime$ at only one point $x_0 \in \mathbb{E}^n$: For a general point $x \in \mathbb{E}^n$, take an isometry $\phi$ mapping $x$ to $\phi(x) = x_0$. Then $[T_1]_{B_R(x)} =  [T_2]_{B_R(x)}$ implies $[\phi(T_1)]_{B_R(x_0)} =  [\phi(T_2)]_{B_R(x_0)}$, hence $[f(\phi(T_1))]_{\{\gamma(x_0)\}} =  [f(\phi(T_2))]_{\{\gamma(x_0)\}}$ because $\phi(T_1), \phi(T_2) \in \Omega$. Since furthermore $f$ is a $\gamma$-factor map, it follows that $[\gamma\phi\gamma^{-1}f(T_1)]_{\{\gamma(x_0)\}} =  [\gamma\phi\gamma^{-1}f(T_2)]_{\{\gamma(x_0)\}}$, or $[f(T_1)]_{\{\gamma\phi^{-1}(x_0)\}} = [f(T_2)]_{\{\gamma\phi^{-1}(x_0)\}}$. But $\phi^{-1}(x_0) = x$.
\end{rem}

\noindent Once again, introducing $\gamma$ makes some natural factor maps MLD:
\begin{prop}
For any isometry $\phi \in \mathrm{Isom}(\mathbb{E}^n)$,
the map $f: \Omega \rightarrow \Omega$ on a space $\Omega$ of tilings of $\mathbb{E}^n$ given by $f(T) := \phi(T)$ for all $T \in \Omega$, is a topological conjugacy making the tiling space $\Omega$ MLD to itself.
\end{prop}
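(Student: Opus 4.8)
The plan is to build on Proposition~\ref{shift-topcon-prop}, which already establishes that $f(T) := \phi(T)$ is a topological conjugacy with $f$ being a $\phi$-factor map and $f^{-1}(T) = \phi^{-1}(T)$ being a $\phi^{-1}$-factor map. So the only thing left to verify is the local derivability condition in both directions, with $\gamma = \phi$. Concretely, I would show that $f$ is $\phi$-locally derivable and $f^{-1}$ is $\phi^{-1}$-locally derivable, and then invoke Definition~\ref{LD-space-Def}.

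For the $\phi$-LD property of $f$: the natural guess is that \emph{any} positive radius works, say $R = 1$. Given $T_1, T_2 \in \Omega$ with $[T_1]_{B_R(x)} = [T_2]_{B_R(x)}$ for some $x \in \mathbb{E}^n$, I want to conclude $[f(T_1)]_{\{\phi(x)\}} = [f(T_2)]_{\{\phi(x)\}}$, i.e. $[\phi(T_1)]_{\{\phi(x)\}} = [\phi(T_2)]_{\{\phi(x)\}}$. The key observation is that for an isometry $\phi$ and any bounded set $A$, one has $[\phi(T)]_{\phi(A)} = \phi([T]_A)$ (this compatibility is used repeatedly in the excerpt, e.g. in the proof that $\Omega_{\mathcal P}$ is a tiling space). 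Applying this with $A = \{x\}$ gives $[\phi(T_i)]_{\{\phi(x)\}} = \phi([T_i]_{\{x\}})$. Since the single-point patch $[T_i]_{\{x\}}$ — the set of tiles containing $x$ — is determined by $[T_i]_{B_R(x)}$ (indeed $\{x\} \subset B_R(x)$, so a tile meeting $\{x\}$ certainly meets $B_R(x)$), the hypothesis $[T_1]_{B_R(x)} = [T_2]_{B_R(x)}$ forces $[T_1]_{\{x\}} = [T_2]_{\{x\}}$, hence applying $\phi$ to both sides gives the claim. The symmetric argument, replacing $\phi$ by $\phi^{-1}$ and $f$ by $f^{-1}$, shows $f^{-1}$ is $\phi^{-1}$-locally derivable with the same radius.

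I do not anticipate a genuine obstacle here; the statement is essentially a formality once Proposition~\ref{shift-topcon-prop} is in hand. The only point requiring a modicum of care is the bookkeeping around the patch-restriction operator and isometries — specifically confirming the identity $[\phi(T)]_{\phi(A)} = \phi([T]_A)$ and noting that restricting to a larger set only \emph{adds} tiles, so equality of patches on $B_R(x)$ implies equality on any subset, in particular on $\{x\}$. Everything else is a direct combination of already-proven results: Proposition~\ref{shift-topcon-prop} supplies the topological conjugacy and the factor-map identities, and the LD-radius can be taken to be any fixed positive number. I would conclude by remarking that this shows $\Omega$ is $\phi$-MLD to itself, completing the proof.
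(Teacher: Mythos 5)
Your proposal is correct and follows essentially the same route as the paper: cite Proposition~\ref{shift-topcon-prop} for the topological conjugacy and factor-map identities, then verify the LD condition in both directions by applying $\phi$ (resp.\ $\phi^{-1}$) to the patch equality $[T_1]_{B_R(x)} = [T_2]_{B_R(x)}$, using $[\phi(T)]_{\phi(A)} = \phi([T]_A)$; the paper transports the whole ball patch to $B_R(\phi(x))$ while you restrict to the singleton $\{x\}$ first, a trivial difference. Your observation that any fixed $R>0$ serves as LD-radius matches the paper's (implicit) use of an arbitrary $R$.
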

\begin{proof}
We have already shown in Prop.~\ref{shift-topcon-prop} that $f$ is a topological conjugacy.

\noindent $f$ is $\phi$-LD because for all $T_1, T_2 \in \Omega$, equality of patches $[T_1]_{B_R(x)} = [T_2]_{B_R(x)}$ implies $[\phi(T_1)]_{B_R(\phi(x))} = [\phi(T_2)]_{B_R(\phi(x))}$, hence
\[ [f(T_1)]_{B_R(\phi(x))} = [f(T_2)]_{B_R(\phi(x))}. \]
Similarly, the inverse $f^{-1}$ is $\phi^{-1}$-LD
\end{proof}

\noindent To show that being MLD defines an equivalence relation on tiling spaces we need a slight generalization of
Def.~\ref{LD-space-Def}.
\begin{lem} \label{LD-crit-lem}
Let $\Omega, \Omega^\prime$ be two tiling spaces of simple tilings of $\mathbb{E}^n$, and $\gamma$ an isometry of $\mathbb{E}^n$. If $f: \Omega \rightarrow \Omega^\prime$ is a LD $\gamma$-factor map with LD-radius $R$, then for all tilings $T_1, T_2 \in \Omega$, points $x \in \mathbb{E}^n$ and $r > 0$,
\[ [T_1]_{B_{R+r}(x)} =  [T_2]_{B_{R+r}(x)} \Rightarrow  [f(T_1)]_{B_r(\gamma(x))} = [f(T_2)]_{B_r(\gamma(x))}. \]
\end{lem}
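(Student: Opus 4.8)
The plan is to reduce the assertion about patches over the ball $B_r(\gamma(x))$ to the pointwise LD condition at single points, using that a patch over a bounded set is the union of the patches over its individual points. First I would record the elementary observation that for any simple tiling $S$ of $\mathbb{E}^n$ and any bounded $A \subset \mathbb{E}^n$ one has $[S]_A = \bigcup_{y \in A} [S]_{\{y\}}$: a tile $t$ (which is a polytope) belongs to $[S]_A$ exactly when $t \cap A \neq \emptyset$, i.e.\ when $y \in t$ for some $y \in A$, i.e.\ when $t \in [S]_{\{y\}}$ for some such $y$. Applying this with $A = B_r(\gamma(x))$ to both $f(T_1)$ and $f(T_2)$, it suffices to prove $[f(T_1)]_{\{y\}} = [f(T_2)]_{\{y\}}$ for each $y \in B_r(\gamma(x))$.

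Next, since $\gamma$ is a bijective isometry, I would write $y = \gamma(x')$ with $x' := \gamma^{-1}(y)$. Because $\gamma$ preserves distances, $d(x', x) = d(\gamma^{-1}(y), \gamma^{-1}(\gamma(x))) = d(y, \gamma(x)) < r$, and therefore $B_R(x') \subseteq B_{R+r}(x)$: any point $z$ with $d(z, x') \leq R$ satisfies $d(z, x) \leq d(z,x') + d(x', x) < R + r$. Since by hypothesis $[T_1]_{B_{R+r}(x)} = [T_2]_{B_{R+r}(x)}$ as families of polytopes, and every tile of $T_i$ meeting $B_R(x')$ is in particular a tile of $T_i$ meeting $B_{R+r}(x)$, a polytope occurring as a tile of $T_1$ and meeting $B_R(x')$ is also a tile of $T_2$, and vice versa; hence $[T_1]_{B_R(x')} = [T_2]_{B_R(x')}$. (Here I am using that the equality of patches over the larger ball descends to equality of patches over any subset of that ball.)

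Finally I would invoke the LD hypothesis with LD-radius $R$ at the point $x'$: from $[T_1]_{B_R(x')} = [T_2]_{B_R(x')}$ it follows that $[f(T_1)]_{\{\gamma(x')\}} = [f(T_2)]_{\{\gamma(x')\}}$, that is, $[f(T_1)]_{\{y\}} = [f(T_2)]_{\{y\}}$. Taking the union over all $y \in B_r(\gamma(x))$ and using the first-paragraph identity gives $[f(T_1)]_{B_r(\gamma(x))} = [f(T_2)]_{B_r(\gamma(x))}$, as required. The whole argument is pure bookkeeping with balls and patches and I do not expect a genuine obstacle; the only points needing a little care are the strict inequality $d(x', x) < r$, which secures $B_R(x') \subseteq B_{R+r}(x)$ regardless of whether balls are taken open or closed, and the fact that the factor-map property is not actually needed for this lemma, only the raw LD implication from Definition~\ref{LD-space-Def}.
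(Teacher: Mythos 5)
Your proof is correct and follows essentially the same route as the paper's: cover $B_{R+r}(x)$ (respectively $B_r(\gamma(x))$) by the balls $B_R(x')$ with $x' \in B_r(x)$, restrict the patch equality to each such smaller ball, apply the pointwise LD condition of Definition~\ref{LD-space-Def} at $x'$, and take the union over all $\gamma(x')$. The extra details you supply (the descent of patch equality to subsets and the observation that the factor-map property is not needed) are fine and merely make explicit what the paper leaves implicit.
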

\begin{proof}
The ball $B_{R+r}(x)$ is covered by all balls $B_R(x^\prime)$ with $x^\prime \in B_r(x)$, and $[T_1]_{B_{R+r}(x)} =  [T_2]_{B_{R+r}(x)}$ implies $[T_1]_{B_R(x^\prime)} =  [T_2]_{B_R(x^\prime)}$ for all these $x^\prime$.
Local derivability, as defined in Def.~\ref{LD-space-Def}, means that
$[f(T_1)]_{\{\gamma(x^\prime)\}} = [f(T_2)]_{\{\gamma(x^\prime)\}}$, for all $x^\prime \in B_r(x)$. But $\bigcup_{x^\prime \in B_r(x)} \{\gamma(x^\prime)\} = B_r(\gamma(x))$, so $[f(T_1)]_{B_r(\gamma(x))} = [f(T_2)]_{B_r(\gamma(x))}$.
\end{proof}

\begin{lem} \label{MLD-equiv-lem}
Being mutually locally derivable defines an equivalence relation on tiling spaces.
\end{lem}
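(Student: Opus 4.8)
The plan is to verify the three defining properties of an equivalence relation — reflexivity, symmetry, and transitivity — for the relation ``$\Omega$ is MLD to $\Omega'$'' on tiling spaces of simple tilings of $\mathbb{E}^n$. Reflexivity is immediate: taking $\phi = \mathbbm{1}_{\mathbb{E}^n}$ in the previous proposition, the identity map $\mathrm{id}\colon \Omega \rightarrow \Omega$ is a topological conjugacy that is $\mathbbm{1}_{\mathbb{E}^n}$-LD with inverse also $\mathbbm{1}_{\mathbb{E}^n}$-LD, so $\Omega$ is MLD to itself. Symmetry is essentially built into the definition: if $f\colon \Omega \rightarrow \Omega'$ witnesses that $\Omega'$ is $\gamma$-locally derivable from $\Omega$, with $f$ a topological conjugacy and $f^{-1}$ being $\gamma^{-1}$-locally derivable, then $f^{-1}\colon \Omega' \rightarrow \Omega$ witnesses the MLD relation in the other direction (one should check $f^{-1}$ is a $\gamma^{-1}$-factor map, which follows by applying $f$ to $f^{-1}(\psi(T')) $ and using the $\gamma$-factor property of $f$, together with continuity of conjugation from Cor.~\ref{cont-conj-mult-cor}).

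The substance of the proof is transitivity. Suppose $f\colon \Omega \rightarrow \Omega'$ is a $\gamma$-LD topological conjugacy with LD-radius $R$ and $g\colon \Omega' \rightarrow \Omega''$ is a $\gamma'$-LD topological conjugacy with LD-radius $R'$; I want to show $g \circ f\colon \Omega \rightarrow \Omega''$ is a $(\gamma'\gamma)$-LD topological conjugacy. That $g \circ f$ is a homeomorphism is clear. That it is a $(\gamma'\gamma)$-factor map follows by a direct computation: $g(f(\phi(T))) = g((\gamma\phi\gamma^{-1})(f(T))) = (\gamma'\gamma\phi\gamma^{-1}\gamma'^{-1})(g(f(T))) = ((\gamma'\gamma)\phi(\gamma'\gamma)^{-1})(g(f(T)))$. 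For the LD property, I set $R'' := R + R'$ and argue as follows. Given $x \in \mathbb{E}^n$ and $T_1, T_2 \in \Omega$ with $[T_1]_{B_{R''}(x)} = [T_2]_{B_{R''}(x)}$, apply the generalized local-derivability estimate (Lem.~\ref{LD-crit-lem}) to $f$ with radius $r = R'$: since $R'' = R + R'$, we get $[f(T_1)]_{B_{R'}(\gamma(x))} = [f(T_2)]_{B_{R'}(\gamma(x))}$. Now apply the ordinary LD property of $g$ at the point $\gamma(x)$: equality of the patches of $f(T_1)$ and $f(T_2)$ on $B_{R'}(\gamma(x))$ gives $[g(f(T_1))]_{\{\gamma'(\gamma(x))\}} = [g(f(T_2))]_{\{\gamma'(\gamma(x))\}}$, i.e. $[g(f(T_1))]_{\{(\gamma'\gamma)(x)\}} = [g(f(T_2))]_{\{(\gamma'\gamma)(x)\}}$. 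This is exactly $(\gamma'\gamma)$-local derivability with LD-radius $R''$. Applying the same argument to $g^{-1}$ and $f^{-1}$ (which are $\gamma'^{-1}$-LD and $\gamma^{-1}$-LD respectively), the inverse $(g\circ f)^{-1} = f^{-1} \circ g^{-1}$ is $((\gamma'\gamma)^{-1})$-LD, since $(\gamma'\gamma)^{-1} = \gamma^{-1}\gamma'^{-1}$. Hence $\Omega$ and $\Omega''$ are MLD.

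The only mildly delicate point — and the place I would be most careful — is the bookkeeping of which $\gamma$ conjugates which map and the order of composition $\gamma'\gamma$ versus $\gamma\gamma'$, since $\mathrm{Isom}(\mathbb{E}^n)$ is non-commutative; getting this right is precisely why Def.~\ref{LD-space-Def} carries the isometry $\gamma$ along. Everything else reduces to the covering argument already packaged in Lem.~\ref{LD-crit-lem} and to continuity of conjugation and composition in $\mathrm{Isom}(\mathbb{E}^n)$, established in Cor.~\ref{cont-conj-mult-cor}; no new estimates on the metric $d_O$ are needed.
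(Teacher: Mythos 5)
Your proposal is correct and follows essentially the same route as the paper: reflexivity and symmetry are immediate from the definition, and transitivity is obtained by composing the factor maps, taking LD-radius $R+R'$, applying Lem.~\ref{LD-crit-lem} to the first map and then the defining LD property of the second map at $\gamma(x)$. The extra details you supply (the explicit $(\gamma'\gamma)$-factor-map computation and the treatment of the inverse) are consistent with, and merely spell out, what the paper leaves implicit.
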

\begin{proof}
By definition, MLD is reflexive and symmetric on tiling spaces.

\noindent For transitivity, assume that $f: \Omega \rightarrow \Omega^\prime$ and $g: \Omega^\prime \rightarrow \Omega^{\prime\prime}$ are $\gamma_f$-LD resp.\ $\gamma_g$-LD factor maps on tiling spaces $\Omega, \Omega^\prime, \Omega^{\prime\prime}$, with LD-radius $R_f$ resp.\ $R_g$. It is enough to show that $g \circ f$ is a $(\gamma_g \circ \gamma_f)$-LD factor map between $\Omega$ and $\Omega^{\prime\prime}$ with $LD$-radius $R_f + R_g$.

\noindent To this purpose take two tilings $T_1, T_2 \in \Omega$ and set $T_1^\prime := f(T_1)$, $T_2^\prime := f(T_2)$, $T_1^{\prime\prime} := g(T_1^\prime)$ and $T_2^{\prime\prime} := g(T_2^\prime)$. If $[T_1]_{B_{R_f+R_g}(x)} = [T_2]_{B_{R_f+R_g}(x)}$ then Lem.~\ref{LD-crit-lem} implies $[T_1^\prime]_{B_{R_g}(\gamma_f(x))} = [T_2^\prime]_{B_{R_g}(\gamma_f(x))}$, and Def.~\ref{LD-space-Def} shows
\[ [T_1^{\prime\prime}]_{\{\gamma_g(\gamma_f(x))\}} = [T_2^{\prime\prime}]_{\{\gamma_g(\gamma_f(x))\}}, \]
as requested.
\end{proof}

\noindent In the literature (see~\cite{BSJ91}) local derivability was introduced for single tilings, and using only translations of $\mathbb{E}^n$, not more general isometries. Example~\ref{MLD-trans-ex} shows that there are tilings (M)LD with respect to translations, but not with respect to all isometries of $\mathbb{E}^n$, in the sense of Def.~\ref{LD-def} below.

\begin{Def} \label{LD-def}
Let $\gamma$ be an isometry of $\mathbb{E}^n$. A simple tiling $T^\prime$ of $\mathbb{E}^n$ is \textit{$\gamma$-locally derivable from} a simple tiling $T$ of $\mathbb{E}^n$ (for short, $T^\prime$ is \textit{$\gamma$-LD from} $T$) if there exists a real number $R > 0$ such that for all $x \in \mathbb{E}^n$ and $\phi \in \mathrm{Isom}(\mathbb{E}^n)$,
\[ [T]_{B_R(x)} = [\phi(T)]_{B_R(x)}\ \Rightarrow [T^\prime]_{\{\gamma(x)\}} = [\gamma\phi\gamma^{-1}T^\prime]_{\{\gamma(x)\}}. \]
Then $R$ is called an \textit{LD-radius} of $T$ and $T^\prime$.

\noindent If $T^\prime$ is $\gamma$-LD from $T$ and $T$ is $\gamma^{-1}$-LD from $T^\prime$, we say that $T$ and $T^\prime$ are \textit{mutually $\gamma$-locally derivable}, for short, \textit{$\gamma$-MLD}.
\end{Def}

\noindent As for tiling spaces there is a criterion for local derivability generalizing the definition:
\begin{lem} \label{LD-crit2-lem}
Let $T, T^\prime$ be two simple tilings of $\mathbb{E}^n$. If $T^\prime$ is $\gamma$-LD from $T$, with LD-radius $R$, then for all $r > 0$, $x \in \mathbb{E}^n$ and $r > 0$,
\[ [T]_{B_{R+r}(x)} = [\phi(T)]_{B_{R+r}(x)}\ \Rightarrow [T^\prime]_{B_r(\gamma(x))} = [\gamma\phi\gamma^{-1}(T^\prime)]_{B_r(\gamma(x))}.\]
\end{lem}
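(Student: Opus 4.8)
The statement is a single-tiling analogue of Lemma~\ref{LD-crit-lem}, and the plan is to mimic that proof almost verbatim, replacing arbitrary tilings $T_1, T_2$ in the tiling space by $T$ and $\phi(T)$, and replacing the factor map $f$ by the conjugation action $\phi \mapsto \gamma\phi\gamma^{-1}$. First I would fix $r > 0$, a point $x \in \mathbb{E}^n$, and an isometry $\phi \in \mathrm{Isom}(\mathbb{E}^n)$, and assume $[T]_{B_{R+r}(x)} = [\phi(T)]_{B_{R+r}(x)}$.

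The key geometric observation is the covering $B_{R+r}(x) = \bigcup_{x' \in B_r(x)} B_R(x')$, exactly as in Lemma~\ref{LD-crit-lem}. For each $x' \in B_r(x)$ we have $B_R(x') \subset B_{R+r}(x)$, so the hypothesis restricts to $[T]_{B_R(x')} = [\phi(T)]_{B_R(x')}$. Applying Definition~\ref{LD-def} with the point $x'$ in place of $x$, and with this same $\phi$, yields $[T^\prime]_{\{\gamma(x')\}} = [\gamma\phi\gamma^{-1}(T^\prime)]_{\{\gamma(x')\}}$ for every $x' \in B_r(x)$.

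Finally I would assemble these pointwise equalities. Since $\gamma$ is an isometry, $\bigcup_{x' \in B_r(x)} \{\gamma(x')\} = \gamma(B_r(x)) = B_r(\gamma(x))$, so the union of the one-point patches $[T^\prime]_{\{\gamma(x')\}}$ over all $x' \in B_r(x)$ is precisely $[T^\prime]_{B_r(\gamma(x))}$, and likewise for $\gamma\phi\gamma^{-1}(T^\prime)$. Hence $[T^\prime]_{B_r(\gamma(x))} = [\gamma\phi\gamma^{-1}(T^\prime)]_{B_r(\gamma(x))}$, as claimed.

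I do not expect any genuine obstacle here; the only point requiring a moment's care is that the \emph{same} isometry $\phi$ must be used at every auxiliary point $x'$, which is fine because $\phi$ is fixed throughout and Definition~\ref{LD-def} quantifies over all points for each given $\phi$. (I would also note in passing that the statement as displayed has a harmless redundancy, quantifying ``$r > 0$'' twice; I would simply state it once.)
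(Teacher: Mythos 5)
Your proposal is correct and follows exactly the route the paper intends: the paper's proof of this lemma is literally ``completely analogous to the proof of Lem.~\ref{LD-crit-lem}'', and you have spelled out that analogy faithfully --- covering $B_{R+r}(x)$ by the balls $B_R(x')$, $x' \in B_r(x)$, applying Definition~\ref{LD-def} at each $x'$ with the fixed $\phi$, and assembling via $\gamma(B_r(x)) = B_r(\gamma(x))$. No issues.
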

\begin{proof}
Completely analogous to the proof of Lem.~\ref{LD-crit-lem}.
\end{proof}

\noindent There is a close connection between local derivability of tilings and of tiling spaces.
\begin{prop}
If $f: \Omega \rightarrow \Omega^\prime$ is a topological conjugacy between spaces of tilings of $\mathbb{E}^n$ that makes $\Omega$ and $\Omega^\prime$ MLD, then each tiling $T \in \Omega$ is MLD to the tiling $f(T)$.
\end{prop}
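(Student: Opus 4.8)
The plan is to translate the statement about tiling spaces into the statement about individual tilings by unravelling the definitions and using the $\gamma$-factor map property of $f$. Suppose $f\colon\Omega\to\Omega^\prime$ is a $\gamma_f$-LD factor map with LD-radius $R_f$ and $f^{-1}$ is a $\gamma_f^{-1}$-LD factor map with LD-radius $R_{f^{-1}}$. Fix $T\in\Omega$ and set $T^\prime:=f(T)\in\Omega^\prime$. I want to show $T^\prime$ is $\gamma_f$-LD from $T$ with LD-radius $R_f$, and $T$ is $\gamma_f^{-1}$-LD from $T^\prime$ with LD-radius $R_{f^{-1}}$; by Definition~\ref{LD-def} this is exactly $\gamma_f$-MLD.

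First I would prove the $\gamma_f$-LD direction. Let $x\in\mathbb{E}^n$ and $\phi\in\mathrm{Isom}(\mathbb{E}^n)$ be such that $[T]_{B_{R_f}(x)}=[\phi(T)]_{B_{R_f}(x)}$. Since $\Omega$ is a tiling space it is closed under isometries, so $\phi(T)\in\Omega$, and we may apply the space-level local derivability of $f$ (Definition~\ref{LD-space-Def}) to the pair of tilings $T_1:=T$, $T_2:=\phi(T)$ at the point $x$. This gives $[f(T)]_{\{\gamma_f(x)\}}=[f(\phi(T))]_{\{\gamma_f(x)\}}$. Now the $\gamma_f$-factor map property says $f(\phi(T))=(\gamma_f\phi\gamma_f^{-1})(f(T))=(\gamma_f\phi\gamma_f^{-1})(T^\prime)$, so we obtain $[T^\prime]_{\{\gamma_f(x)\}}=[\gamma_f\phi\gamma_f^{-1}(T^\prime)]_{\{\gamma_f(x)\}}$, which is precisely the implication required in Definition~\ref{LD-def} with LD-radius $R_f$.

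For the reverse direction I would argue symmetrically, but with a small bookkeeping point to watch: $f^{-1}$ is a $\gamma_f^{-1}$-factor map (this is part of what "topological conjugacy making $\Omega$ and $\Omega^\prime$ MLD" gives, and it was already spelled out for the special case in Prop.~\ref{shift-topcon-prop}), and $f^{-1}(T^\prime)=T$. So given $y\in\mathbb{E}^n$ and $\psi\in\mathrm{Isom}(\mathbb{E}^n)$ with $[T^\prime]_{B_{R_{f^{-1}}}(y)}=[\psi(T^\prime)]_{B_{R_{f^{-1}}}(y)}$, closedness of $\Omega^\prime$ under isometries gives $\psi(T^\prime)\in\Omega^\prime$, and applying the local derivability of $f^{-1}$ together with its $\gamma_f^{-1}$-factor property yields $[T]_{\{\gamma_f^{-1}(y)\}}=[\gamma_f^{-1}\psi\gamma_f(T)]_{\{\gamma_f^{-1}(y)\}}$, i.e.\ $T$ is $\gamma_f^{-1}$-LD from $T^\prime$. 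Combining the two directions and invoking Definition~\ref{LD-def} completes the proof.

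The steps here are all routine unwindings of definitions; the only thing that requires a moment's care — and the place I would expect a referee to look — is making sure that the isometric image $\phi(T)$ (resp.\ $\psi(T^\prime)$) actually lies in the ambient tiling space so that the space-level LD hypothesis applies. This is exactly where the axiom that a tiling space is closed under all of $\mathrm{Isom}(\mathbb{E}^n)$ (not merely $\mathrm{Trans}(\mathbb{E}^n)$) is used, and it is the structural reason the more general notion of local derivability in Def.~\ref{LD-def} meshes with the space-level one in Def.~\ref{LD-space-Def}. No genuine obstacle is anticipated beyond this observation.
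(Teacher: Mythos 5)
Your proposal is correct and follows essentially the same route as the paper's own proof: apply the space-level LD property of $f$ to the pair $T$, $\phi(T)$ (legitimate since $\Omega$ is closed under isometries), then rewrite $f(\phi(T))$ via the $\gamma$-factor map identity, and repeat with $f^{-1}$ for the reverse direction. The extra bookkeeping you flag (closure of the tiling space under all of $\mathrm{Isom}(\mathbb{E}^n)$ and the fact that $f^{-1}$ is a $\gamma^{-1}$-factor map) is exactly the point the paper uses implicitly, so nothing is missing.
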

\begin{proof}
Let $T$ be a tiling of $\Omega$ and $\phi$ an isometry of $\mathbb{E}^n$, and assume that $f$ is a $\gamma$-factor map. Then equality of patches $[T]_{B_R(x)} = [\phi(T)]_{B_R(x)}$ implies $[f(T)]_{\{\gamma(x)\}} = [f(\phi(T))]_{\{\gamma(x)\}}$ because $\phi(T)$ is also a tiling in the tiling space $\Omega$. Since $f$ is a $\gamma$-factor map we conclude
\[ [f(T)]_{\{\gamma(x)\}} = [\gamma\phi\gamma^{-1}(f(T))]_{\{\gamma(x)\}}, \]
so $f(T)$ is LD from $T$. Using the inverse factor map $f^{-1}$ in the same way we can also show that $T$ is $LD$ from $f(T)$.
\end{proof}

\begin{prop} \label{LD-til-LD-map-prop}
If $T$ and $T^\prime$ are tilings of $\mathbb{E}^n$ and $\gamma$ is an isometry of $\mathbb{E}^n$ such that $T^\prime$ is $\gamma$-LD from $T$ then there exists a unique locally derivable continuous $\gamma$-factor map
\[ f: \Omega_T \rightarrow \Omega_{T^\prime} \]
between the hull $\Omega_T$ of $T$ and the hull $\Omega_{T^\prime}$ of $T^\prime$ such that $f(T) = T^\prime$.

\noindent In particular, if $T$ and $T^\prime$ are MLD tilings then $\Omega_T$ and $\Omega_{T^\prime}$ are MLD tiling spaces.
\end{prop}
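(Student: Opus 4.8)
The plan is to construct $f$ on the orbit $O(T)$ first, then extend by continuity to the hull $\Omega_T$, and finally check that the extension is a locally derivable $\gamma$-factor map.

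First I would \emph{define $f$ on the orbit}. For $\phi\in\mathrm{Isom}(\mathbb{E}^n)$ set $f(\phi(T)):=\gamma\phi\gamma^{-1}(T^\prime)$. This is well-defined: if $\phi_1(T)=\phi_2(T)$ then $\phi:=\phi_2^{-1}\phi_1\in\mathrm{Aut}(T)$, so $[T]_{B_R(x)}=[\phi(T)]_{B_R(x)}$ for all $x$ (with $R$ any LD-radius for the pair $T,T^\prime$), and Def.~\ref{LD-def} gives $[T^\prime]_{\{\gamma(x)\}}=[\gamma\phi\gamma^{-1}(T^\prime)]_{\{\gamma(x)\}}$ for all $x$, i.e.\ $\gamma\phi\gamma^{-1}(T^\prime)=T^\prime$; hence $\gamma\phi_1\gamma^{-1}(T^\prime)=\gamma\phi_2\gamma^{-1}(T^\prime)$. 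By construction $f(T)=T^\prime$ and $f(\psi(\phi(T)))=\gamma\psi\phi\gamma^{-1}(T^\prime)=(\gamma\psi\gamma^{-1})(\gamma\phi\gamma^{-1}(T^\prime))=(\gamma\psi\gamma^{-1})(f(\phi(T)))$, so $f$ is a $\gamma$-factor map on $O(T)$, and its image lies in $O(T^\prime)\subset\Omega_{T^\prime}$.

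Next I would \emph{prove uniform continuity of $f$ on $O(T)$}, which is the main obstacle. Using Lem.~\ref{LD-crit2-lem}, if two tilings $\phi_1(T),\phi_2(T)$ agree on a large ball around some point then so do $f(\phi_1(T)),f(\phi_2(T))$ on a correspondingly large ball around the $\gamma$-image of that point. Concretely, suppose $d_O(\phi_1(T),\phi_2(T))$ is small; unwinding Def.~\ref{dist-tilings-def} there are isometries $\eta_1,\eta_2$ close to $\mathbbm{1}_{\mathbb{E}^n}$ with $[\eta_1\phi_1(T)]_{B_r(O)}=[\eta_2\phi_2(T)]_{B_r(O)}$ for $r$ large. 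Rewriting this as an equality $[\,\eta_1\phi_1\phi_2^{-1}\eta_2^{-1}(T')_{\text{-source}}\,]$-type statement and applying Lem.~\ref{LD-crit2-lem} (with the isometry $\phi:=\phi_2^{-1}\eta_2^{-1}\eta_1\phi_1$ playing the role of the automorphism-like map, after translating the ball center so that the hypothesis of Def.~\ref{LD-def} applies) yields $[f(\eta_1\phi_1(T))]_{B_{r-R}(\gamma(\cdot))}=[f(\eta_2\phi_2(T))]_{B_{r-R}(\gamma(\cdot))}$. Using the $\gamma$-factor property to pull the $\eta_i$ out as $\gamma\eta_i\gamma^{-1}$, and bounding $d_{\gamma(O)}(\gamma\eta_i\gamma^{-1},\mathbbm{1}_{\mathbb{E}^n})$ via Lem.~\ref{d_O-lem}(\ref{conj-met-for}) and (\ref{met-comp-for}), one concludes $d_{\gamma(O)}(f(\phi_1(T)),f(\phi_2(T)))$ is small. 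Since $d_{\gamma(O)}$ and $d_O$ induce the same topology on $\Omega_{T^\prime}$ (Prop.~\ref{indep-top-met-prop}), this is genuine uniform continuity. The delicate bookkeeping here is keeping track of which ball centers move under which isometries, and ensuring the radius $r-R$ still tends to infinity as the distance tends to $0$; this is where one must be careful, in the spirit of the triangle-inequality computation for $d_O$.

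Finally I would \emph{extend and conclude}. A uniformly continuous map from the dense subset $O(T)$ of $\Omega_T$ into the complete metric space $\Omega_{T^\prime}$ (it is complete, being a tiling space) extends uniquely to a continuous map $f:\Omega_T\to\Omega_{T^\prime}$. The $\gamma$-factor identity $f(\psi(S))=(\gamma\psi\gamma^{-1})(f(S))$ holds on the dense set $O(T)$ and both sides are continuous in $S$ (using Cor.~\ref{cont-conj-mult-cor} for continuity of $S\mapsto(\gamma\psi\gamma^{-1})(f(S))$ together with Prop.~\ref{shift-topcon-prop}), hence it holds on all of $\Omega_T$. The LD-radius inequality $[T_1]_{B_R(x)}=[T_2]_{B_R(x)}\Rightarrow[f(T_1)]_{\{\gamma(x)\}}=[f(T_2)]_{\{\gamma(x)\}}$ for $T_1,T_2\in\Omega_T$ follows: both sides depend continuously on the pair $(T_1,T_2)$, the hypothesis is a closed condition, and the conclusion a closed condition, and the implication holds on the dense set of pairs from $O(T)$ by the factor-map computation in the orbit case combined with Def.~\ref{LD-def}. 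Uniqueness is immediate since any continuous $\gamma$-factor map with $f(T)=T^\prime$ is forced to equal $\gamma\phi\gamma^{-1}(T^\prime)$ on $O(T)$, which is dense. The "in particular" statement is then the symmetric application: if $T,T^\prime$ are $\gamma$-MLD, run the construction both ways to get $f:\Omega_T\to\Omega_{T^\prime}$ and $g:\Omega_{T^\prime}\to\Omega_T$; on the dense orbits $g\circ f$ and $f\circ g$ are the identity, hence everywhere by continuity, so $f$ is a topological conjugacy, and each direction is locally derivable by the above, making $\Omega_T$ and $\Omega_{T^\prime}$ MLD tiling spaces.
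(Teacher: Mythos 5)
Your construction follows the paper's strategy: define $f(\phi(T)):=\gamma\phi\gamma^{-1}(T^\prime)$ on the orbit, check well-definedness via Def.~\ref{LD-def} (this matches the paper's Claim~1), establish continuity on $O(T)$ by combining the metric with Lem.~\ref{LD-crit2-lem} and Lem.~\ref{d_O-lem}(\ref{conj-met-for}) (your sketch, though loosely bookkept, is sound and in fact phrased as genuine uniform continuity, which makes the extension to the hull cleaner than the paper's purely sequential continuity claim), and transfer the $\gamma$-factor identity and uniqueness by density. Up to that point the proposal is essentially the paper's proof.

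However, there is a genuine gap in your final step, where you claim that the locally-derivable property of the extended $f$ follows because ``the hypothesis is a closed condition, the conclusion a closed condition, and the implication holds on the dense set of pairs from $O(T)$.'' That schema is not valid: let $H\subset\Omega_T\times\Omega_T$ be the set of pairs with $[T_1]_{B_R(x)}=[T_2]_{B_R(x)}$ and $C$ the set with $[f(T_1)]_{\{\gamma(x)\}}=[f(T_2)]_{\{\gamma(x)\}}$. Knowing $H\cap(O(T)\times O(T))\subset C$ with $C$ closed only gives $\overline{H\cap(O(T)\times O(T))}\subset C$, and since $H$ is itself closed (not open), there is no reason why an arbitrary pair in $H$ should be a limit of \emph{orbit} pairs lying in $H$: approximants $\phi^{(i)}_n(T)\to T_i$ need not agree on $B_R(x)$ just because $T_1$ and $T_2$ do. This is exactly the point to which the paper devotes the second half of its proof: starting from the stronger hypothesis $[T_1]_{B_{R+r}(x)}=[T_2]_{B_{R+r}(x)}$ and using the metric based at the point $x$, it corrects the approximating isometries to $\bar\phi^{(i)}_n$ so that $[\bar\phi^{(1)}_n(T)]_{B_{R+r}(x)}=[\bar\phi^{(2)}_n(T)]_{B_{R+r}(x)}$ exactly while still $\bar\phi^{(i)}_n(T)\to T_i$, applies the orbit case (via Lem.~\ref{LD-crit2-lem}) to get agreement of the images on $B_r(\gamma(x))$, and then passes to the limit using a separate lemma on limits of coinciding patches. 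You need to supply this approximation-with-correction argument (you already use the relevant technique in your continuity step); the closedness observation alone does not close the implication from the dense orbit to the whole hull.
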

\begin{proof}
Assume that the LD-radius of $T$ and $T^\prime$ is $R$.

\noindent We construct $f$ by setting $f(T) = T^\prime$, extending it to the orbit of $T$ by setting
\[ f(\phi(T)) := \gamma\phi\gamma^{-1}(T^\prime) \]
for all isometries $\phi$ of $\mathbb{E}^n$, and then by continuity to the hull $\Omega_T$ which is the closure of $O(T)$. This construction also shows that any $\gamma$-factor map $f^\prime: \Omega_T \rightarrow \Omega_{T^\prime}$ is uniquely prescribed by the image $f^\prime(T)$ of $T$, hence the uniqueness statement.

\noindent The construction is well-defined because of the next two claims:

\vspace{0.1cm}

\noindent \textit{Claim 1.} $\phi(T) = T$ implies $\gamma\phi\gamma^{-1}(T^\prime) = T^\prime$, for all isometries $\phi$ of $\mathbb{E}^n$.

\noindent \textit{Proof of Claim 1.}
This follows directly from $T^\prime$ being $\gamma$-LD from $T$, as $\phi(T) = T$ implies the equality of patches $[\phi(T)]_{B_R(x)} = [T]_{B_R(x)}$ covering balls of radius $R$ centered in arbitrary points $x \in \mathbb{E}^n$. Consequently, tiles of $\gamma\phi\gamma^{-1}(T^\prime)$ containing $\gamma(x)$ are also tiles of $T^\prime$.
\hfill $\Box$

\vspace{0.1cm}

\noindent Thus we obtain a $\gamma$-factor map $f: O(T) \rightarrow O(T^\prime)$ between the orbits of $T$ and~$T^\prime$.

\vspace{0.1cm}

\noindent \textit{Claim 2.} The map $f: O(T) \rightarrow O(T^\prime)$ is continuous with respect to the topologies induced from the hulls $\Omega_T$ and $\Omega_{T^\prime}$.

\noindent \textit{Proof of Claim 2.}
Let $T_n \rightarrow \bar{T}$ be a convergent sequence in $O(T)$. Choose $\phi_n, \bar{\phi} \in \mathrm{Isom}(\mathbb{E}^n)$ such that $T_n = \phi_n(T)$ and $\bar{T} = \bar{\phi}(T)$. Since the orbit $O(T)$ can be a nowhere closed dense subset of the hull $\Omega_T$ we cannot assume that $\phi_n \rightarrow \bar{\phi}$. Instead we combine the definition of the distance between tilings and that of local derivability.

\noindent $T_n \rightarrow \bar{T}$ tells us that there exists a large $R \gg 0$ and $\psi_n, \bar{\psi}_n$ tending to $\mathrm{id}_{\mathbb{E}^n}$ such that
\[ \left[ \psi_n \phi_n (T) \right]_{B_R(x)} = \left[ \bar{\psi}_n \bar{\phi} (T) \right]_{B_R(x)}. \] 
This implies 
\[ \left[ T \right]_{B_R(\phi_n^{-1} \psi_n^{-1} (x))} = \left[ \phi_n^{-1} \psi_n^{-1} \bar{\psi}_n \bar{\phi} (T) \right]_{B_R(\phi_n^{-1} \psi_n^{-1} (x))}. \]
Since $R$ will eventually be much larger than the the LD-radius of $T$ and $T^\prime$ we may conclude 
\[ \left[ T^\prime \right]_{B_{R^\prime}(\gamma \phi_n^{-1} \psi_n^{-1} (x))}  = 
    \left[ \gamma \phi_n^{-1} \psi_n^{-1} \bar{\psi}_n \bar{\phi} \gamma^{-1} (T^\prime) \right]_{B_{R^\prime}(\gamma  
    \phi_n^{-1} \psi_n^{-1} (x))} \]
for some $R^\prime > R/2$. Hence
\[ \left[ \gamma \phi_n \gamma^{-1}(T^\prime) \right]_{B_{R^\prime}(\gamma \psi_n^{-1} (x))} = 
    \left[ \gamma \psi_n^{-1} \bar{\psi}_n \bar{\phi} \gamma^{-1} (T^\prime) \right]_{B_{R^\prime}(\gamma \psi_n^{-1} (x))} \]
Setting $\bar{\psi}^\prime_n := \gamma \psi_n^{-1} \bar{\psi}_n^{-1} \gamma^{-1}$ and possibly further reducing $R^\prime$ by an arbitrarily small amount we obtain
\[ \left[ \gamma \phi_n \gamma^{-1}(T^\prime) \right]_{B_{R^\prime}(\gamma(x))} = \left[ \bar{\psi}^\prime_n \gamma \bar{\phi} \gamma^{-1}(T^\prime) \right]_{B_{R^\prime}(\gamma(x))} \]
since $d_O(\psi_n^{-1}, \mathrm{id}_{\mathbb{E}^n}) \rightarrow 0$. Then Lem.~\ref{conj-met-for} implies 
\[ \gamma \phi_n \gamma^{-1} (T^\prime) \rightarrow \gamma \bar{\phi} \gamma^{-1} (T^\prime). \]
\hfill $\Box$

\vspace{0.1cm}

\noindent Next we show that $f$ is a $\gamma$-factor map: If $\bar{T} = \lim_{n \rightarrow \infty} \phi_n(T)$ for isometries $\phi_n$ of $\mathbb{E}^n$ then
\begin{eqnarray*}
\gamma\phi\gamma^{-1}f(\bar{T}) & = & \gamma\phi\gamma^{-1}f(\lim_{n \rightarrow \infty} \phi_n(T)) =
     \gamma\phi\gamma^{-1}(\lim_{n \rightarrow \infty} f(\phi_n(T))) \\
 & = & \lim_{n \rightarrow \infty} \gamma\phi\phi_n\gamma^{-1}f(T) = \lim_{n \rightarrow \infty} f(\phi\phi_n(T)) \\
 & = & f(\phi(\lim_{n \rightarrow \infty} \phi_n(T))) = f(\phi(\bar{T})).
\end{eqnarray*}
Here, we again use that multiplication in the group $\mathrm{Isom}(\mathbb{E}^n)$ is continuous.

\noindent Finally we show that $f$ is $\gamma$-LD, in two steps: First, we consider two tilings $T_1 = \phi_1(T), T_2 = \phi_2(T)$ in the orbit of $T$. Equality of patches $[\phi_1(T)]_{B_{R+r}(x)} = [\phi_2(T)]_{B_{R+r}(x)}$, for some $r > 0$, implies
$[T]_{B_{R+r}(\phi_1^{-1}(x))} = [\phi_1^{-1}\phi_2(T)]_{B_{R+r}(\phi_1^{-1}(x))}$.
Since $T$ and $T^\prime$ are $\gamma$-LD with LD-radius $R$ we conclude
\[ [T^\prime]_{B_r(\gamma(\phi_1^{-1}(x)))} =
                                    [\gamma\phi_1^{-1}\phi_2\gamma^{-1}(T^\prime)]_{B_r(\gamma(\phi_1^{-1}(x))))}, \]
using Lem.~\ref{LD-crit2-lem}. Hence $[\gamma\phi_1\gamma^{-1}(T^\prime)]_{B_r(\gamma(x))} = [\gamma\phi_2\gamma^{-1}(T^\prime)]_{B_r(\gamma(x))}$, or equivalently
\[ [f(\phi_1(T))]_{B_r(\gamma(x))} = [f(\phi_2(T))]_{B_r(\gamma(x))}. \]
Next we consider the general case: Let $T_1 = \lim_{n \rightarrow \infty} \phi_n^{(1)}(T)$ and $T_2 = \lim_{n \rightarrow \infty} \phi_n^{(2)}(T)$, and assume that $[T_1]_{B_{R+r}(x)} = [T_2]_{B_{R+r}(x)}$ for a point $x \in \mathbb{E}^n$ and some $r > 0$. Define the distance on $\Omega_T$ using the origin $x$ and the distance on $\Omega_{T^\prime}$ using the origin $\gamma(x)$.

\noindent For a given $R^\prime > R+r$, the limit of $d_x(T_1, \phi_n^{(1)}(T))$ being $0$ implies that for $n \gg 0$ there exist $\phi_n^{(1)\prime}, \phi_n^{(1)\prime\prime} \in \mathrm{Isom}(\mathbb{E}^n)$ such that $d_x(\phi_n^{(1)\prime}, \mathbbm{1}_{\mathbb{E}^n})$ and $d_x(\phi_n^{(1)\prime\prime}, \mathbbm{1}_{\mathbb{E}^n})$ are arbitrarily small and
\[ [\phi_n^{(1)\prime}(T_1)]_{B_{R^\prime}(x)} =  [\phi_n^{(1)\prime\prime}(\phi_n^{(1)}(T))]_{B_{R^\prime}(x)}. \]
Similarly, there exist $\phi_n^{(2)\prime}, \phi_n^{(2)\prime\prime} \in \mathrm{Isom}(\mathbb{E}^n)$ such that $d_x(\phi_n^{(2)\prime}, \mathbbm{1}_{\mathbb{E}^n})$ and $d_x(\phi_n^{(2)\prime\prime}, \mathbbm{1}_{\mathbb{E}^n})$ are arbitrarily small and
\[ [\phi_n^{(2)\prime}(T_2)]_{B_{R^\prime}(x)} =  [\phi_n^{(2)\prime\prime}(\phi_n^{(2)}(T))]_{B_{R^\prime}(x)}. \]
These two equalities of patches imply
\[ [T_1]_{B_{R^\prime}(\phi_n^{(1)\prime -1}(x))} =  [\phi_n^{(1)\prime -1}\phi_n^{(1)\prime\prime}(\phi_n^{(1)}(T))]_{B_{R^\prime}(\phi_n^{(1)\prime -1}(x))} \]
and
\[ [T_2]_{B_{R^\prime}(\phi_n^{(2)\prime -1})(x)} =  [\phi_n^{(2)\prime -1}\phi_n^{(2)\prime\prime}(\phi_n^{(2)}(T))]_{B_{R^\prime}(\phi_n^{(2)\prime -1}(x))}. \]
Since $R^\prime > R+r$ and $\phi_n^{(1)\prime}, \phi_n^{(2)\prime}$ are arbitrarily close to $\mathbbm{1}_{\mathbb{E}^n}$ we have $B_{R+r}(x) \subset B_{R^\prime}(\phi_n^{(1)\prime -1}(x))$ and $B_{R+r}(x) \subset B_{R^\prime}(\phi_n^{(2)\prime -1}(x))$, hence
\[ [T_1]_{B_{R+r}(x)} =  [\phi_n^{(1)\prime -1}\phi_n^{(1)\prime\prime}(\phi_n^{(1)}(T))]_{B_{R+r}(x)} \]
and
\[ [T_2]_{B_{R+r}(x)} =  [\phi_n^{(2)\prime -1}\phi_n^{(2)\prime\prime}(\phi_n^{(2)}(T))]_{B_{R+r}(x)}. \]
Together with our assumptions on $T_1$ and $T_2$ this  implies the existence of $\bar{\phi}^{(1)}_n, \bar{\phi}^{(2)}_n$ close to $\phi^{(1)}_n, \phi^{(2)}_n$ such that
\[ [\bar{\phi}^{(1)}_n(T)]_{B_{R+r}(x)} = [\bar{\phi}^{(2)}_n(T)]_{B_{R+r}(x)}, \]
and furthermore $\bar{\phi}^{(1)}_n(T)$ and $\bar{\phi}^{(2)}_n(T)$ tend to $T_1$ resp.\ $T_2$ when $n \rightarrow \infty$.

\noindent By the first step discussed above we conclude
\[ [f(\bar{\phi}^{(1)}_n(T))]_{B_r(\gamma(x))} = [f(\bar{\phi}^{(2)}_n(T))]_{B_r(\gamma(x))}. \]

\noindent Finally, the next lemma shows that $[f(T_1)]_{\{\gamma(x)\}} = [f(T_2)]_{\{\gamma(x)\}}$, as claimed.
\end{proof}

\begin{lem}
Let $T_n \rightarrow T$ and $S_n \rightarrow S$ be two convergent series of tilings of $\mathbb{E}^n$ such that for a point $x \in \mathbb{E}^n$, some $r > 0$ and all $n \gg 0$, we have $[T_n]_{B_r(x)} = [S_n]_{B_r(x)}$. Then:
\[ [T]_{\{x\}} = [S]_{\{x\}}. \]
\end{lem}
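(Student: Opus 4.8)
The plan is to reduce the statement about points to a statement about small balls, using that convergence in the tiling metric $d_x$ means that, after moving the tilings by isometries arbitrarily close to the identity, they agree on arbitrarily large balls centered at $x$. First I would fix an arbitrarily small $\varepsilon > 0$. Since $T_n \to T$ and $S_n \to S$ with respect to $d_x$ (using the origin $x$, which by Prop.~\ref{indep-top-met-prop} does not affect the topology), for all $n \gg 0$ there exist isometries $\phi_n, \phi'_n, \psi_n, \psi'_n \in \mathrm{Isom}(\mathbb{E}^n)$ with $d_x$-distance to $\mathbbm{1}_{\mathbb{E}^n}$ less than $\varepsilon$ such that
\[ [\phi_n(T)]_{B_\rho(x)} = [\phi'_n(T_n)]_{B_\rho(x)} \quad\text{and}\quad [\psi_n(S)]_{B_\rho(x)} = [\psi'_n(S_n)]_{B_\rho(x)} \]
for some $\rho$ which can be taken as large as we wish (at the cost of shrinking $\varepsilon$ further). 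Rewriting the first equality as $[T]_{B_\rho(\phi_n^{-1}(x))} = [\phi_n^{-1}\phi'_n(T_n)]_{B_\rho(\phi_n^{-1}(x))}$ and similarly for $S$, and using that $\phi_n^{-1}(x)$ and $\psi_n^{-1}(x)$ are within $\varepsilon$ of $x$, I would conclude that $T$ agrees with a near-identity isometric image of $T_n$ on $B_{\rho/2}(x)$, and $S$ agrees with a near-identity isometric image of $S_n$ on $B_{\rho/2}(x)$.

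Next I would feed in the hypothesis $[T_n]_{B_r(x)} = [S_n]_{B_r(x)}$. Since $T_n$ and $S_n$ themselves coincide on $B_r(x)$, and the isometries relating $T$ to $T_n$ and $S$ to $S_n$ differ from one another only by something within $2\varepsilon$ of the identity, there is an isometry $\chi_n$ with $d_x(\chi_n, \mathbbm{1}_{\mathbb{E}^n}) < 4\varepsilon$ (say) such that
\[ [T]_{B_{r'}(x)} = [\chi_n(S)]_{B_{r'}(x)} \]
for $r' = r/2$ once $\varepsilon$ is small compared to $r$; this uses that an isometry close to the identity moves $B_{r/2}(x)$ inside $B_r(x)$, together with the elementary composition estimates from Lem.~\ref{d_O-lem}, parts~(\ref{inv-met-for}),~(\ref{prod-id-for}) and~(\ref{met-conj-bd-for}). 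Now $\varepsilon$ was arbitrary, so we obtain a sequence of isometries $\chi^{(k)}$ with $d_x(\chi^{(k)}, \mathbbm{1}_{\mathbb{E}^n}) \to 0$ and equalities $[T]_{B_{r_k}(x)} = [\chi^{(k)}(S)]_{B_{r_k}(x)}$ where, since $r$ is fixed and only $\varepsilon$ shrinks, we keep $r_k$ bounded below by, say, $r/4$.

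Finally I would extract the pointwise conclusion. A tile $t \in T$ with $x \in t$ is, for each $k$, equal to a tile $\chi^{(k)}(s_k)$ with $s_k \in S$ and $\chi^{(k)}(s_k) \ni x$; since $\chi^{(k)} \to \mathbbm{1}_{\mathbb{E}^n}$ and there are only finitely many prototiles up to isometries, the tiles $s_k$ lie in a compact family, and any tile $s$ with $\chi^{(k)}(s) \ni x$ must satisfy $x \in B_\delta(s)$ for $\delta \to 0$; hence $x \in \overline{s}= s$ (tiles are closed), and for $k$ large $\chi^{(k)}(s)$ is so close to $s$ and both contain $x$ in a fixed bounded neighborhood that $s = t$, so $t \in S$. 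The symmetric argument gives the reverse inclusion, yielding $[T]_{\{x\}} = [S]_{\{x\}}$. The main obstacle I anticipate is this last step: making precise that a tile forced to contain a fixed point $x$ after an isometry tending to the identity must, in the limit, itself be one of the finitely many tiles of the limit tiling through $x$ — one has to rule out the tile "escaping" by using simplicity (finitely many prototiles up to isometry) to get compactness, and the facet-to-facet / convex-polytope structure to pin down that the limiting coincidence on a small ball forces equality of the individual tiles through $x$, not merely of their union.
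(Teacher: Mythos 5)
Your proposal is correct and takes essentially the same route as the paper's (much more condensed) proof: convergence in the tiling metric is unpacked as agreement on large balls around $x$ up to near-identity isometries, the hypothesis $[T_n]_{B_r(x)} = [S_n]_{B_r(x)}$ is transported through these, and the tile $t \ni x$ of $T$ is identified as a limit of tiles of $S$, which local finiteness (coming from simplicity) forces to be a tile of $S$ itself. Your write-up merely makes explicit the small isometries and the final limit-of-tiles step that the paper's three-line proof leaves implicit.
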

\begin{proof} Let $t \in T$ be a tile containing $x$. Since $T_n \rightarrow T$ there exist tiles $t_n \in T_n$ such that $t_n \rightarrow t$ (with respect to a metric on bounded subsets of $\mathbb{E}^n$ extending the Euclidean metric on points). Since $[T_n]_{B_r(x)} = [S_n]_{B_r(x)}$ the tile $t_n$ also lies in $S_n$, and since $S_n \rightarrow S$ we conclude $t = \lim_{n \rightarrow \infty} t_n \in S$.
\end{proof}

\begin{rem}
If topological conjugacy and mutual local derivability are defined only using translations there exist topologically conjugated but not MLD tiling spaces. For example, Clark and Sadun (\cite{CS06}, \cite[\S 3.6]{Sad08}) show that the standard Penrose tiling and the rational Penrose tiling have topologically equivalent but not MLD hulls constructed using only translations.

\noindent However, the shape changing of the triangle tiles in a standard Penrose tiling to those in the corresponding rational Penrose tiling breaks rotational symmetries, so their hulls are not even topologically conjugated.

\noindent Thus the difference beween topologically conjugated and MLD tiling spaces allowing general isometries seems to be an open question.
\end{rem}

\noindent At least, it is straight forward to prove a necessary metric condition for a $\gamma$-factor map between two tiling spaces to be $\gamma$-LD:
\begin{prop}
Let $\Omega, \Omega^\prime$ be two tiling spaces of simple tilings of $\mathbb{E}^n$. A $\gamma$-locally derivable map $f: \Omega \rightarrow \Omega^\prime$ is Lipschitz-continuous.
\end{prop}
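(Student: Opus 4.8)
The plan is to estimate $d_{O'}(f(T_1), f(T_2))$ in terms of $d_O(T_1, T_2)$ for tilings $T_1, T_2 \in \Omega$, where $O$ is the chosen origin for the metric on $\Omega$, $\gamma$ is the isometry for which $f$ is a $\gamma$-factor map with LD-radius $R$, and $O' := \gamma(O)$ is the natural origin for the metric on $\Omega'$. The key input is Lem.~\ref{LD-crit-lem}, which upgrades the pointwise LD condition to: $[T_1]_{B_{R+r}(x)} = [T_2]_{B_{R+r}(x)}$ implies $[f(T_1)]_{B_r(\gamma(x))} = [f(T_2)]_{B_r(\gamma(x))}$. Since $f$ is a $\gamma$-factor map, and conjugation by $\gamma$ is a bijection of $\mathrm{Isom}(\mathbb{E}^n)$ sending $\mathbbm{1}_{\mathbb{E}^n}$ to $\mathbbm{1}_{\mathbb{E}^n}$ with metric distortion controlled by Lem.~\ref{d_O-lem}(\ref{conj-met-for}) and (\ref{met-comp-for}), the isometries $\phi,\psi$ close to the identity that witness $d_O(T_1,T_2)$ being small produce isometries $\gamma\phi\gamma^{-1}, \gamma\psi\gamma^{-1}$ close to the identity that will witness $d_{O'}(f(T_1),f(T_2))$ being small, at a controlled cost in the radius.

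First I would set up the quantitative chain. Suppose $d_O(T_1, T_2) < \ln(3/2)$, so that there is $r$ with $r > R(T_1,T_2)/2$, arbitrarily close to $R(T_1,T_2)$, hmm—more precisely, for every $r < R(T_1,T_2)$ there exist $\phi,\psi \in \mathrm{Isom}(\mathbb{E}^n)$ with $d_O(\phi,\mathbbm{1}_{\mathbb{E}^n}), d_O(\psi,\mathbbm{1}_{\mathbb{E}^n}) < \frac{1}{2r}$ and $[\phi(T_1)]_{B_r(O)} = [\psi(T_2)]_{B_r(O)}$. Rewriting via $[\phi(T)]_{B_r(O)} = \phi([T]_{B_r(\phi^{-1}(O))})$, and using that $\phi,\psi$ are close to the identity so that $\phi^{-1}(O), \psi^{-1}(O)$ are within $\frac{1}{2r}$ of $O$, one gets $[T_1]_{B_{r'}(O)} = [\chi(T_2)]_{B_{r'}(O)}$ for $\chi := \phi^{-1}\psi$ and $r' := r - \tfrac{1}{r}$ or so (absorbing the two small translations), with $d_O(\chi,\mathbbm{1}_{\mathbb{E}^n}) \le \tfrac{1}{r}$ by the triangle inequality Lem.~\ref{d_O-lem}(\ref{prod-id-for}) and (\ref{inv-met-for}). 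Now apply Lem.~\ref{LD-crit-lem} with $x = O$: once $r' > R$, we obtain $[f(T_1)]_{B_{r'-R}(\gamma(O))} = [f(\chi(T_2))]_{B_{r'-R}(\gamma(O))} = [\gamma\chi\gamma^{-1}(f(T_2))]_{B_{r'-R}(O')}$, the last step because $f$ is a $\gamma$-factor map. Then by Lem.~\ref{d_O-lem}(\ref{met-conj-bd-for}) (or (\ref{conj-met-for}) combined with (\ref{met-comp-for})), $d_{O'}(\gamma\chi\gamma^{-1}, \mathbbm{1}_{\mathbb{E}^n})$ is bounded by a constant (depending only on $\gamma$) times $d_O(\chi,\mathbbm{1}_{\mathbb{E}^n}) \le \tfrac1r$. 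Writing $\gamma\chi\gamma^{-1}$ as a product of a translation near $\mathbbm{1}_{\mathbb{E}^n}$ and an orthogonal map near $\mathbbm{1}_{\mathbb{E}^n}$, and using Lem.~\ref{sqrt-lem} to split the orthogonal part into two halves (as in the proof of Prop.~\ref{cont-mov-til-prop}), one rewrites the single equation $[f(T_1)]_{\cdot} = [\gamma\chi\gamma^{-1}f(T_2)]_{\cdot}$ as $[\phi'(f(T_1))]_{B_\rho(O')} = [\psi'(f(T_2))]_{B_\rho(O')}$ with $\phi',\psi'$ of $d_{O'}$-distance to $\mathbbm{1}_{\mathbb{E}^n}$ at most $\tfrac{\mathrm{const}}{r}$ and $\rho$ comparable to $r' - R$. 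This forces $R(f(T_1), f(T_2)) \gtrsim r$, hence $d_{O'}(f(T_1), f(T_2)) \le \ln(1 + \tfrac{C}{r})$ for a constant $C = C(\gamma, R)$, while $d_O(T_1,T_2) \approx \ln(1 + \tfrac1r)$; for $r$ large this gives $d_{O'}(f(T_1),f(T_2)) \le C' \cdot d_O(T_1,T_2)$, which is Lipschitz continuity near $0$ in one metric, and since all the metrics $d_O$ induce the same topology (Prop.~\ref{indep-top-met-prop}) and differ by bounded factors (Lem.~\ref{d_O-lem}(\ref{met-comp-for})), a global Lipschitz constant on the bounded metric space follows.

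The main obstacle I anticipate is bookkeeping rather than conceptual: tracking how the radius degrades through the three operations (clearing the witnessing isometries $\phi,\psi$ to get a single conjugation $\chi$; applying the LD-radius shift of $R$; transporting the resulting near-identity isometry $\gamma\chi\gamma^{-1}$ back into the form required by Def.~\ref{dist-tilings-def} with the factor-of-two constraint $d_{O'}(\cdot,\mathbbm{1}_{\mathbb{E}^n}) < \tfrac{1}{2\rho}$), and making sure the multiplicative constant depends only on $\gamma$ and $R$, not on the particular tilings. A secondary subtlety is that Def.~\ref{dist-tilings-def} caps $d_O$ at $\ln(3/2)$, so one only gets the Lipschitz estimate for small distances directly; upgrading to a genuine global Lipschitz constant uses that $(\Omega, d_O)$ has bounded diameter $\le \ln(3/2)$, so any two points at distance $\ge$ some $\delta_0 > 0$ trivially satisfy the estimate with constant $\ln(3/2)/\delta_0$. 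One should also note that $f$ being $\gamma$-LD with radius $R$ is what is needed; the statement in the excerpt presumably means this, and Lem.~\ref{LD-crit-lem} is available precisely for that situation.
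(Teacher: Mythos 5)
Your proposal is correct in substance, but it takes a more roundabout route than the paper's proof. The paper never collapses the two witnessing isometries into one: since $\Omega$ is closed under isometries, $\phi_1(T_1)$ and $\phi_2(T_2)$ are themselves tilings in $\Omega$ whose patches agree on $B_r(x_0)$, so Lem.~\ref{LD-crit-lem} applies directly to this pair and gives $[f(\phi_1(T_1))]_{B_{r-R}(\gamma(x_0))} = [f(\phi_2(T_2))]_{B_{r-R}(\gamma(x_0))}$; the $\gamma$-factor property rewrites this as a patch equality for $\gamma\phi_1\gamma^{-1}(f(T_1))$ and $\gamma\phi_2\gamma^{-1}(f(T_2))$, and by the exact identity of Lem.~\ref{d_O-lem}(\ref{conj-met-for}) the conjugated witnesses satisfy $d_{\gamma(x_0)}(\gamma\phi_i\gamma^{-1},\mathbbm{1}_{\mathbb{E}^n}) = d_{x_0}(\phi_i,\mathbbm{1}_{\mathbb{E}^n}) < \frac{1}{2r} < \frac{1}{2(r-R)}$, so they already witness $R(f(T_1),f(T_2)) \geq r-R$ with no further manipulation; the elementary estimate $\ln(1+\frac{1}{r-R}) < 3\ln(1+\frac{1}{r})$ for $r>2R$ then finishes. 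By first reducing to the single isometry $\chi = \phi^{-1}\psi$ you are forced to redistribute $\gamma\chi\gamma^{-1}$ over the two sides afterwards, and that is exactly where your constants need care: halving the translation and taking a square root of the orthogonal part via Lem.~\ref{sqrt-lem} only bounds each half by $\norm{\beta-\mathbbm{1}_{\mathbb{E}^n}}_{\mathrm{op}}$ (roughly $\tfrac{1}{\sqrt{2}}$ of it), not by half of it, so the witnesses need not lie within $\frac{1}{2\rho}$ of the identity at radius $\rho \approx r'-R$. This does not sink your argument, because you only claim $R(f(T_1),f(T_2)) \gtrsim r$: it suffices to shrink the radius to $\min\{r'-R,\tfrac{r}{2}\}$, at which point the splitting is unnecessary altogether ($\phi'=\mathbbm{1}_{\mathbb{E}^n}$, $\psi'=\gamma\chi\gamma^{-1}$ already qualify) — but it costs a somewhat worse Lipschitz constant and bookkeeping that the paper's direct use of both witnesses avoids. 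Your treatment of the cap $\ln(3/2)$ and of the bounded diameter to pass from the small-distance estimate to a global Lipschitz constant is fine and matches what the paper leaves implicit.
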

\begin{proof}
Fix a point $x_0 \in \mathbb{E}^n$ and assume that $R > 0$ is an LD radius of $f$. For $T_1, T_2 \in \Omega$, the definition of $d_{x_0}(T_1, T_2) =: \delta < \ln \frac{3}{2}$ implies the existence of $r$ arbitrarily close to $\frac{1}{e^\delta-1}$ and $\phi_1, \phi_2 \in \mathrm{Isom}(\mathbb{E}^n)$ with $d_{x_0}(\phi_i, \mathbbm{1}_{\mathbb{E}^n}) < \frac{1}{2r}$, $i = 1, 2$,
such that
\[ [\phi_1(T_1)]_{B_r(x_0)} = [\phi_2(T_2)]_{B_r(x_0)}. \]
Since $f$ is $\gamma$-LD we know for $r \gg 0$ (that is, $\delta \ll 1$) that
\[ [f(\phi_1(T_1))]_{B_{r-R}(\gamma(x_0))} = [f(\phi_2(T_2))]_{B_{r-R}(\gamma(x_0))}. \]
Since $f(\phi_i(T_i)) = \gamma\phi_i\gamma^{-1}(f(T_i))$ and by Lem.~\ref{d_O-lem}(\ref{conj-met-for})
\[ d_{\gamma(x_0)}(\gamma\phi_i\gamma^{-1}, \mathbbm{1}_{\mathbb{E}^n}) = d_{x_0}(\phi_i, \mathbbm{1}_{\mathbb{E}^n}),\ i = 1, 2, \] we conclude that  for $r \gg 0$
\[ d_{\gamma(x_0)}(f(T_1), f(T_2)) < \ln(1 + \frac{1}{r-R}). \]
In particular, if $r > 2R$ then a short calculation shows that $\ln(1 + \frac{1}{r-R}) < 3 \cdot \ln(1 + \frac{1}{r})$.

\noindent Lipschitz-continuity follows.
\end{proof}

\section{Crystallographic tilings} \label{cryst-tilings-sec}

\subsection{Definitions and first properties} The following notion is natural, but seems not to be present in the literature.
\begin{Def} \label{cryst-tile-def}
A simple tiling $T$ of $\mathbb{E}^n$ is \textit{crystallographic} if its \textit{automorphism group}
\[ \mathrm{Aut}(T) := \{ \phi \in \mathrm{Isom}(\mathbb{E}^n) : \phi(T) = T \} \]
is a crystallographic group.
\end{Def}

\noindent Baake and Grimm at least introduce \textit{crystallographic point sets} in $\mathbb{E}^n$ defined by symmetry groups that are crystallographic \cite[Def.3.1]{BG13} but do not study any equivalences between such point sets.

\begin{prop} \label{cryst-hull-prop}
The hull of a crystallographic tiling $T$ of $\mathbb{E}^n$ is homeomorphic to the topological space $\mathrm{Isom}(\mathbb{E}^n)/\mathrm{Aut}(T)$, and the homeomorphism is equivariant under the natural action of $\mathrm{Isom}(\mathbb{E}^n)$.
\end{prop}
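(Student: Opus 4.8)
Proof proposal.

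The plan is to exhibit the homeomorphism explicitly via the orbit map. Fix the tiling $T$ and consider the map
\[ \Phi: \mathrm{Isom}(\mathbb{E}^n) \rightarrow \Omega_T,\quad \phi \mapsto \phi(T), \]
whose image is the orbit $O(T)$. First I would check that $\Phi$ is continuous: this is essentially Prop.~\ref{cont-mov-til-prop} combined with Cor.~\ref{cont-conj-mult-cor}, since $d_O(\phi_k(T),\phi(T)) = d_O(\phi(\phi^{-1}\phi_k)(T),\phi(T))$ and moving a tiling by $\phi$ is a topological conjugacy (Prop.~\ref{shift-topcon-prop}), so it suffices to control $d_{\phi(O)}((\phi^{-1}\phi_k)(T),T)$, which tends to $0$ when $d_O(\phi^{-1}\phi_k,\mathbbm{1}_{\mathbb{E}^n}) \rightarrow 0$. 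The fibre of $\Phi$ over $T$ is exactly $\mathrm{Aut}(T)$ by definition, and more generally $\phi(T) = \psi(T)$ iff $\psi^{-1}\phi \in \mathrm{Aut}(T)$, so $\Phi$ descends to a continuous bijection
\[ \bar\Phi: \mathrm{Isom}(\mathbb{E}^n)/\mathrm{Aut}(T) \rightarrow O(T) \]
onto the orbit, which is clearly $\mathrm{Isom}(\mathbb{E}^n)$-equivariant for the left translation action on the quotient and the natural action on tilings.

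The substantive point is that $\bar\Phi$ is a homeomorphism and that $O(T)$ is already closed in $\Omega_T$, i.e.\ $O(T) = \Omega_T$. Here is where crystallographicity enters: by Bieberbach (Thm.~\ref{Bieb-thm}) the lattice $\Gamma\cap\mathrm{Trans}(\mathbb{E}^n)$ has full rank and the point group is finite, so $\mathrm{Isom}(\mathbb{E}^n)/\mathrm{Aut}(T)$ is compact; since $\bar\Phi$ is a continuous bijection from a compact space to a Hausdorff space (the tiling space with metric $d_O$), it is automatically a homeomorphism onto its image, and that image $O(T)$, being compact, is closed. Hence $O(T)$ is closed, so $\Omega_T = \overline{O(T)} = O(T)$, and $\bar\Phi: \mathrm{Isom}(\mathbb{E}^n)/\mathrm{Aut}(T) \rightarrow \Omega_T$ is the desired equivariant homeomorphism.

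The main obstacle, and the step requiring genuine care rather than formal nonsense, is verifying that the quotient topology on $\mathrm{Isom}(\mathbb{E}^n)/\mathrm{Aut}(T)$ really is compact and Hausdorff and that the continuity of $\bar\Phi$ is with respect to the right topologies — in particular one must know that $\mathrm{Aut}(T)$, being a crystallographic group, acts properly discontinuously and cocompactly, so that the quotient is a compact Hausdorff (indeed metrizable) space; the Hausdorffness of the quotient uses discreteness of $\Gamma$, and compactness uses Bieberbach part (1). A secondary subtlety is making sure the metric $d_O$ on $\Omega_T$ actually induces the subspace topology compatible with the tiling-space topology used throughout (Prop.~\ref{indep-top-met-prop} guarantees independence of $O$, so this is harmless). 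Once these topological facts are in place the argument is the standard "continuous bijection from compact to Hausdorff" trick, and equivariance is immediate from the definitions.
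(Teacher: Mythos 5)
Your argument is correct and follows essentially the same route as the paper: the orbit map $\phi \mapsto \phi(T)$ descends to an $\mathrm{Isom}(\mathbb{E}^n)$-equivariant continuous bijection $\mathrm{Isom}(\mathbb{E}^n)/\mathrm{Aut}(T) \rightarrow O(T)$, compactness of the quotient makes this a homeomorphism and forces $O(T)$ to be closed, hence equal to the hull; indeed your use of the compact-to-Hausdorff trick supplies a detail the paper leaves implicit in the phrase ``by construction''. One small simplification: you do not need Bieberbach's theorem (nor proper discontinuity) to get compactness of $\mathrm{Isom}(\mathbb{E}^n)/\mathrm{Aut}(T)$, since in this paper compactness of $\mathrm{Isom}(\mathbb{E}^n)/\Gamma$ is part of the very definition of a crystallographic group, which is exactly what the paper's proof cites.
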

\begin{proof}
By construction, the orbit $O(T)$ of the tiling $T$ is homeomorphic to the quotient space $\mathrm{Isom}(\mathbb{E}^n)/\mathrm{Aut}(T)$, and the homeomorphism is $\mathrm{Isom}(\mathbb{E}^n)$-equivariant. Since $T$ is crystallographic, the quotient space $\mathrm{Isom}(\mathbb{E}^n)/\mathrm{Aut}(T)$ must be compact by the definition of crystallographic groups. In particular, $O(T) = \mathrm{Isom}(\mathbb{E}^n)/\mathrm{Aut}(T)$ is closed and hence the hull of $T$.
\end{proof}

\begin{exm} \label{cryst-til-ex}
The standard lattice tiling $T$ constructed in Ex.~\ref{top-eq-ex} is a first example of a crystallographic tiling, as we show by determining the automorphism group $\mathrm{Aut}(T)$: An automorphism $\phi \in \mathrm{Isom}(\mathbb{E}^2)$ of $T$ is the composition $\phi = \tau \cdot \alpha$ of a translation $\tau \in \mathrm{Trans}(\mathbb{E}^2)$ and an orthogonal map $\alpha \in O(\mathbb{E}^2_O)$ centered in $O = (0,0) \in \mathbb{E}^2$. Since $\alpha$ fixes the origin $O$, the translation $\tau$ is determined by the image of the origin, $\tau(O) = \phi(O) =: (n_O, m_O)$. Since $\phi$ maps vertices of tiles to vertices of tiles, $(n_O, m_O) \in \mathbb{Z}^2$, hence $\tau \in \mathrm{Aut}(T) \cap \mathbb{Z}^2$, and also
\[ \alpha = (-\tau) \cdot \phi \in \mathrm{Aut}(T) \cap GL(\mathbb{E}^2_O, \mathbb{Z}). \]
It is easy to see that there are $8$ orthogonal maps that map the unit square with vertices $(0,0), (0,1), (1,0), (1,1)$ to another square in $\mathbb{E}^2$ having vertices with integer coordinates, namely
\[ \begin{pmatrix} 1 & 0\\ 0 & 1 \end{pmatrix}, \begin{pmatrix}1 & 0\\ 0 & -1 \end{pmatrix},
    \begin{pmatrix} -1 & 0\\ 0 & 1 \end{pmatrix}, \begin{pmatrix} -1 & 0\\ 0 & -1 \end{pmatrix}, \]
\[ \begin{pmatrix} 0 & 1\\ 1 & 0 \end{pmatrix}, \begin{pmatrix}0 & -1\\ 1 & 0 \end{pmatrix},
    \begin{pmatrix} 0 & 1\\ -1 & 0 \end{pmatrix}, \begin{pmatrix}0 & -1\\ -1 & 0 \end{pmatrix}. \]
The group of these matrices is isomorphic to the isometry group of a square $D_4$, so
\[ \mathrm{Aut}(T) \cong \mathbb{Z}^2 \rtimes D_4. \]
Next, we consider the tiling $T^\prime$ of $\mathbb{E}^2$ made up of the rhomb with vertices $(0,0), (1,0), (2,1), (1,1)$ and all its translations by vectors $(n,m) \in \mathbb{Z}^2$.

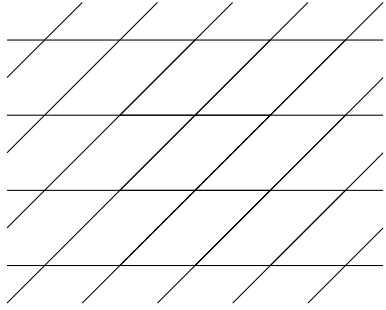
\begin{figure}[h!  ]
\begin{center}
\begin{tikzpicture}[ ]

\draw (1,2) -- (0,2) -- (-1,1) -- (0,1) -- cycle;
\draw (2,2) -- (1,2) -- (0,1) -- (1,1) -- cycle;
\draw (3,2) -- (2,2) -- (1,1) -- (2,1) -- cycle;

\draw (0,1) -- (1,1) -- (0,0) -- (-1,0) -- cycle;
\draw (1,1) -- (2,1) -- (1,0) -- (0,0) -- cycle;
\draw (2,1) -- (3,1) -- (2,0) -- (1,0) -- cycle;

\draw (-0,0) -- (1,0) -- (0,-1) -- (-1,-1) -- cycle;
\draw (1,0) -- (2,0) -- (1,-1) -- (0,-1) -- cycle;
\draw (2,0) -- (3,0) -- (2,-1) -- (1,-1) -- cycle;

\draw(-1,2)--(-1.5,2);
\draw(-1,2)--(-. 5,2.5);
\draw(0,2)--(.5,2.5);
\draw(1,2)--(1.5,2.5);
\draw(2,2)--(2.5,2.5);
\draw(-.5,-1.5)--(3.5,2.5);
\draw(3,2)--(3.5,2);
\draw(3,1)--(3.5,1);
\draw(3,0)--(3.5,0);
\draw(3,-1)--(3.5,-1);

\draw( 2.5,-1.5)--(3.5,- .5);
\draw(1.5,-1.5)--(3.5,.5);
\draw(.5,-1.5)--(3.5,1.5);

\draw(-1,-1)--(-1.5,-1.5);
\draw(-1,0)--(-1.5,-.5);
\draw(-1,1)--(-1.5,0.5);
\draw(-1,2)--(-1.5,1.5);

\draw(0,2)--(-1 , 2 );
\draw(-1,1)--(-1.5,1);
\draw(-1,0)--(-1.5,0);
\draw(-1,-1)--(-1.5,-1);

\draw(2,-1)--(3,-1);

\end{tikzpicture}
\end{center}
\caption{Slanted standard lattice tiling}
  \label{sslt-E2-fig}
\end{figure}

\noindent As before, we may calculate that
\[ \mathrm{Aut}(T^\prime) \cap \mathrm{Trans}(\mathbb{E}^2) = \mathbb{Z}^2,\ \
    \mathrm{Aut}(T^\prime) \cap O(\mathbb{E}^2_O) =
    \left\{ \begin{pmatrix} 1 & 0\\ 0 & 1 \end{pmatrix}, \begin{pmatrix}-1 & 0\\ 0 & -1 \end{pmatrix} \right\}, \]
and consequently, with $D_2$ denoting as usual the "symmetry group of the $2$-gon",
\[ \mathrm{Aut}(T^\prime) \cong \mathbb{Z}^2 \rtimes D_2. \]
In particular, $\mathrm{Aut}(T^\prime)$ is a subgroup of $\mathrm{Aut}(T)$ of index $4$.

\noindent Finally we look at the standard lattice tiling $\bar{T}$ of $\mathbb{E}^2$ contracted by a factor of $2$. That is, the tiles of $\bar{T}$ are squares with side length $\frac{1}{2}$ and vertices with coordinates in $\mathbb{Z} + \frac{1}{2}\cdot \mathbb{Z}$. Obviously,
\[ \mathrm{Aut}(\bar{T}) \cong \mathbb{Z}^2 \rtimes D_4, \]
but as subgroups of $\mathrm{Isom}(\mathbb{E}^2)$, we have $\mathrm{Aut}(T) \subset \mathrm{Aut}(\bar{T})$, and $\mathrm{Aut}(T)$ is a subgroup of index $4$.
\end{exm}

\subsection{Equivalences between crystallographic tilings}

\noindent The main point about introducing equivalence relations between tilings (or tiling spaces) using isometries and not only translations is that these relations allow to distinguish between tilings with different (crystallographic) automorphism groups.

\begin{thm} \label{LD-cryst-thm}
A crystallographic tiling $T^\prime$ of $\mathbb{E}^n$ is $\gamma$-LD from the crystallographic tiling $T$ of $\mathbb{E}^n$ if and only if
\[ \gamma \cdot \mathrm{Aut}(T) \cdot \gamma^{-1} \subset \mathrm{Aut}(T^\prime). \]
\end{thm}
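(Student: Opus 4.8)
The plan is to prove the two implications separately. The implication ``$\gamma$-LD $\Rightarrow$ conjugacy'' is soft and uses nothing about crystallographicity: if $T'$ is $\gamma$-LD from $T$ with LD-radius $R$ and $\psi\in\mathrm{Aut}(T)$, then $\psi(T)=T$, so the hypothesis $[T]_{B_R(x)}=[\psi(T)]_{B_R(x)}$ of Definition~\ref{LD-def} (with $\phi:=\psi$) holds for every $x\in\mathbb{E}^n$, and its conclusion yields $[T']_{\{\gamma(x)\}}=[\gamma\psi\gamma^{-1}(T')]_{\{\gamma(x)\}}$ for all $x$. Since $\gamma$ is a bijection of $\mathbb{E}^n$ this says $[T']_{\{y\}}=[\gamma\psi\gamma^{-1}(T')]_{\{y\}}$ for all $y\in\mathbb{E}^n$, and as a tiling is recovered from the tiles through each of its points we get $\gamma\psi\gamma^{-1}(T')=T'$, i.e.\ $\gamma\psi\gamma^{-1}\in\mathrm{Aut}(T')$. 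Letting $\psi$ range over $\mathrm{Aut}(T)$ gives $\gamma\,\mathrm{Aut}(T)\,\gamma^{-1}\subseteq\mathrm{Aut}(T')$.

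For the converse the key input is a rigidity statement: \emph{for a crystallographic tiling $T$ there is a radius $R_T>0$ such that, for all $x\in\mathbb{E}^n$ and $\phi\in\mathrm{Isom}(\mathbb{E}^n)$, the equality $[T]_{B_{R_T}(x)}=[\phi(T)]_{B_{R_T}(x)}$ forces $\phi(T)=T$, i.e.\ $\phi\in\mathrm{Aut}(T)$.} Granting this with $R:=R_T$, the hypothesis $\gamma\,\mathrm{Aut}(T)\,\gamma^{-1}\subseteq\mathrm{Aut}(T')$ gives, whenever $[T]_{B_R(x)}=[\phi(T)]_{B_R(x)}$, that $\gamma\phi\gamma^{-1}\in\gamma\,\mathrm{Aut}(T)\,\gamma^{-1}\subseteq\mathrm{Aut}(T')$, hence $\gamma\phi\gamma^{-1}(T')=T'$ and a fortiori $[T']_{\{\gamma(x)\}}=[\gamma\phi\gamma^{-1}(T')]_{\{\gamma(x)\}}$, which is exactly the $\gamma$-LD property of Definition~\ref{LD-def}.

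To prove the rigidity statement I would proceed as follows. By Bieberbach's theorem~\ref{Bieb-thm} the lattice $\Lambda:=\mathrm{Aut}(T)\cap\mathrm{Trans}(\mathbb{E}^n)$ has full rank, so $T$ is $\Lambda$-periodic with only finitely many tiles modulo $\Lambda$; consequently the set $\mathcal{A}\subset O(\mathbb{E}^n_O)$ of rotational parts of isometries carrying a tile of $T$ onto a tile of $T$ is finite, tile diameters are bounded by some $\rho$, and tile inradii are bounded below by some $r_0>0$. Since $\mathrm{Aut}(T)$ is discrete, the orbit map $\psi\mapsto\psi(T)$, $\mathrm{Isom}(\mathbb{E}^n)\to\Omega_T\cong\mathrm{Isom}(\mathbb{E}^n)/\mathrm{Aut}(T)$ (continuous by Prop.~\ref{cont-mov-til-prop}, open and finite-to-one by Prop.~\ref{cryst-hull-prop}), restricts to a homeomorphism of some neighbourhood $U$ of $\mathbbm{1}_{\mathbb{E}^n}$ onto a neighbourhood of $T$; shrink $U$ so that the rotational parts of its elements avoid the finite set $\mathcal{A}\setminus\{\mathbbm{1}_{\mathbb{E}^n}\}$ and their translation parts have norm $<2r_0$, and pick $\epsilon_0\in(0,\ln\tfrac32)$ with $\{S:d_O(S,T)<\epsilon_0\}$ inside the image. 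Fix a bounded fundamental domain $F$ of $\Lambda$ of diameter $\le D'$ and take $R_T$ large enough that $R_T>\rho$ and $\ln(1+1/(R_T-D'))<\epsilon_0$. Now assume $[T]_{B_{R_T}(x)}=[\phi(T)]_{B_{R_T}(x)}$; replacing $(x,\phi)$ by $(x-\lambda,\tau_{-\lambda}\phi\tau_\lambda)$ for a suitable $\lambda\in\Lambda$ leaves both hypothesis and conclusion unchanged (using $\tau_\lambda(T)=T$), so we may take $x\in F$. Then $B_{R_T-D'}(O)\subseteq B_{R_T}(x)$, the patch equality restricts to $[T]_{B_{R_T-D'}(O)}=[\phi(T)]_{B_{R_T-D'}(O)}$, hence $d_O(\phi(T),T)<\epsilon_0$ and therefore $\phi(T)=\psi(T)$ for a unique $\psi\in U$. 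Since $R_T>\rho$ the ball $B_{R_T}(x)$ contains a whole tile $t$ of $T$; from $t\in\psi(T)$ (write $t=\psi(t')$ with $t'\in T$) and a comparison of orientations the rotational part of $\psi$ lies in $\mathcal{A}$, hence is $\mathbbm{1}_{\mathbb{E}^n}$, so $\psi$ is a translation by some $\sigma$ with $\norm{\sigma}<2r_0$; but then $t-\sigma\in T$ while $t\in T$ have overlapping interiors, so $t-\sigma=t$ and $\sigma=0$. Thus $\psi=\mathbbm{1}_{\mathbb{E}^n}$ and $\phi(T)=T$.

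The delicate point is the rigidity statement, and inside it the uniformity of $R_T$ in the centre $x$. Two structural features of crystallographic tilings are what make it work and must be exploited: a small nontrivial isometry genuinely alters an arbitrarily large patch of $T$ — small rotational parts are excluded by finiteness of $\mathcal{A}$, small translational parts by the positive lower bound on inradii — and the centre $x$ can be pushed into a fixed fundamental domain by a lattice translation before the ($x$-dependent) tiling metric enters the argument. A more hands-on route to the lemma, avoiding covering-space language, is also available: once $\phi$'s rotational part is confined to the finite set $\mathcal{A}$, one argues directly that $T$ and $\phi(T)$ cannot agree on a sufficiently large ball unless $\phi\in\mathrm{Aut}(T)$, separating according to whether $\alpha\Lambda\alpha^{-1}$ is commensurable with $\Lambda$ (then $T$ and $\phi(T)$ are both periodic under a common full-rank lattice, so a nonempty lattice-periodic ``disagreement locus'' would meet $B_{R_T}(x)$) or not (then the group generated by $\Lambda$ and $\alpha\Lambda\alpha^{-1}$ is non-discrete, and the resulting dense set of small local translation symmetries of $\phi(T)$ near $x$ contradicts tiles having positive inradius).
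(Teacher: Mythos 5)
Your proposal is correct, and its skeleton coincides with the paper's: the forward implication is the same soft argument, and for the converse you and the paper reduce to the identical rigidity claim --- for a crystallographic $T$ there is a radius $R$, uniform in $x$, such that $[T]_{B_R(x)}=[\phi(T)]_{B_R(x)}$ forces $\phi\in\mathrm{Aut}(T)$ --- after which the $\gamma$-LD property is immediate from $\gamma\,\mathrm{Aut}(T)\,\gamma^{-1}\subset\mathrm{Aut}(T^\prime)$. Where you genuinely diverge is in the proof of that claim. The paper argues purely with tiles and the lattice $\Lambda=\mathrm{Aut}(T)\cap\mathrm{Trans}(\mathbb{E}^n)$: it chooses $R$ so that $\Lambda$-translates of $B_R(x)$ cover $\mathbb{E}^n$, first shows $\rho\sigma_i\rho^{-1}\in\mathrm{Aut}(T)$ for lattice generators $\sigma_i$ (using normality and commutativity of translations together with patch bookkeeping), and then transports every tile into the ball by a lattice translation to conclude $\rho(t)\in T$ for all tiles $t$. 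You instead pass through the hull: via Prop.~\ref{cryst-hull-prop} and discreteness of $\mathrm{Aut}(T)$ you identify tilings $d_O$-close to $T$ with $\psi(T)$ for $\psi$ near $\mathbbm{1}_{\mathbb{E}^n}$, normalize $x$ into a fundamental domain to get uniformity of $R$, and then exclude small nontrivial $\psi$ by finiteness of the rotational parts of tile-to-tile isometries and the lower bound on tile inradii. Both routes work: the paper's is more elementary and self-contained (no appeal to the hull topology or to inradius/orientation estimates), while yours is more conceptual, isolating the actual rigidity mechanism (discreteness of $\mathrm{Aut}(T)$ plus the absence of arbitrarily small tile symmetries), at the cost of leaning on Prop.~\ref{cryst-hull-prop} --- in particular on openness of the orbit map, which is only implicit there --- and on full-dimensionality/inradius facts about simple tilings that the paper also takes for granted. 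Two small blemishes, neither fatal: the orbit map $\mathrm{Isom}(\mathbb{E}^n)\rightarrow\Omega_T$ is not finite-to-one (its fibres are cosets of the infinite discrete group $\mathrm{Aut}(T)$; what you actually use is local injectivity near $\mathbbm{1}_{\mathbb{E}^n}$, which discreteness does give), and the ball $\{S: d_O(S,T)<\epsilon_0\}$ should be read inside $\Omega_T$, which suffices since you only apply it to $\phi(T)\in\Omega_T$.
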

\begin{proof}
First assume that $T^\prime$ is $\gamma$-LD from $T$, and let $R$ be an LD-radius. For any isometry $\rho \in \mathrm{Aut}(T)$ and any tile $t^\prime$ of $T^\prime$ we need to show that $\gamma\rho\gamma^{-1}(t^\prime)$ is another tile of $T^\prime$: Then $\gamma \cdot \rho \cdot \gamma^{-1} \in \mathrm{Aut}(T^\prime)$. So choose an $x \in \mathbb{E}^n$ such that $\gamma(x)$ is in the interior of $\gamma\rho\gamma^{-1}(t^\prime)$. Since $\rho(T) = T$ we have $[T]_{B_R(x)} = [\rho(T)]_{B_R(x)}$. Therefore, $T^\prime$ $\gamma$-LD from $T$ implies that
\[ [T^\prime]_{\{\gamma(x)\}} = [\gamma\rho\gamma^{-1}(T^\prime)]_{\{\gamma(x)\}} = \gamma\rho\gamma^{-1}(t^\prime). \]
In particular, $\gamma\rho\gamma^{-1}(t^\prime)$ is a tile of $T^\prime$.

\noindent Vice versa, assume that $\gamma \cdot \mathrm{Aut}(T) \cdot \gamma^{-1} \subset \mathrm{Aut}(T^\prime)$. Since $\mathrm{Aut}(T)$ is a crystallographic group, $\mathrm{Aut}(T) \cap \mathrm{Trans}(\mathbb{E}^n)$ contains a lattice of full rank by Bieberbach's Theorem~\ref{Bieb-thm}. Hence we can choose a radius $R$ large enough so that for any $x \in \mathbb{E}^n$, we have
\[ \bigcup_{\sigma \in \mathrm{Aut}(T) \cap \mathrm{Trans}(\mathbb{E}^n)} \sigma(B_R(x)) = \mathbb{E}^n. \leqno{(*)} \]
Furthermore, note that for all isometries $\phi \in \mathrm{Isom}(\mathbb{E}^n)$, radii $r>0$ and points $x \in \mathbb{E}^n$,
\[ \phi([T]_{B_r(x)}) = [\phi(T)]_{B_r(\phi(x))}. \leqno{(**)}\]
Now assume that $[T]_{B_R(x)} = [\rho(T)]_{B_R(x)}$ for $x \in \mathbb{E}^n$.

\noindent \textit{Claim}. $\rho \in \mathrm{Aut}(T)$.

\noindent The claim together with $\gamma \cdot \mathrm{Aut}(T) \cdot \gamma^{-1} \subset \mathrm{Aut}(T^\prime)$ implies $\gamma \cdot \rho \cdot \gamma^{-1} \in \mathrm{Aut}(T^\prime)$, hence $[T^\prime]_{\{\gamma(x)\}} = [\gamma\rho\gamma^{-1}(T^\prime)]_{\{\gamma(x)\}}$. Consequently, $T^\prime$ is $\gamma$-LD from $T$.

\noindent \textit{Proof of Claim}. It is enough to show that $\rho(t)$ is a tile of $T$ for all tiles $t \in T$. From ($\ast$) we deduce that there is a $\sigma \in \mathrm{Aut}(T) \cap \mathrm{Trans}(\mathbb{E}^n)$ such that
\[ \sigma(t) \in [T]_{B_R(\rho^{-1}(x))}. \]
The assumption on $\rho$ and ($\ast\ast$) imply that
\[ \rho([T]_{B_R(\rho^{-1}(x))}) = [\rho(T)]_{B_R(x)} = [T]_{B_R(x)}. \]
Therefore there exists a tile $t^\prime \in T$ such that
\[ t^\prime = \rho(\sigma(t)) = (\rho\sigma\rho^{-1}) \cdot \rho(t). \]
In particular, $\rho(t) \in T$ if $\rho\sigma\rho^{-1} \in \mathrm{Aut}(T)$, so it is enough to show that
\[ \rho (\mathrm{Aut}(T) \cap \mathrm{Trans}(\mathbb{E}^n)) \rho^{-1} =
    \mathrm{Aut}(T) \cap \mathrm{Trans}(\mathbb{E}^n). \]
To this purpose choose generators $\sigma_1, \ldots, \sigma_n$ of the lattice of full rank $\mathrm{Aut}(T) \cap \mathrm{Trans}(\mathbb{E}^n)$. Then each $\sigma \in \mathrm{Aut}(T) \cap \mathrm{Trans}(\mathbb{E}^n)$ is a $\mathbb{Z}$-linear combination of these generators, and we have
\[ \rho\sigma\rho^{-1} = \rho(k_1\sigma_1 + \cdots + k_n\sigma_n)\rho^{-1} =
   (\rho\sigma_1\rho^{-1})^{k_1} \cdots (\rho\sigma_n\rho^{-1})^{k_n}. \]
Hence it is enough to show that $\rho\sigma_i\rho^{-1} \in \mathrm{Aut}(T) \cap \mathrm{Trans}(\mathbb{E}^n)$, for $i = 1, \ldots, n$.

\noindent Choose $R$ large enough such that there exists an $R^\prime < R$ for which property ($\ast$) still holds and
\[ R^\prime + \max_{i=1, \ldots, n} \norm{\sigma_i}_{\mathrm{Eucl}} < R. \]
Then the assumption on $\rho$ implies for all $i=1, \ldots, n$ that
\[ \rho\sigma_i\rho^{-1}([T]_{B_{R^\prime}(x)}) = \rho\sigma_i\rho^{-1}([\rho(T)]_{B_{R^\prime}(x)}), \]
and iteratively applying ($\ast\ast$) and using $\sigma_i(T) = T$ yields
\[ \rho\sigma_i\rho^{-1}([\rho(T)]_{B_{R^\prime}(x)}) = [\rho(T)]_{B_{R^\prime}(\rho\sigma_i\rho^{-1}(x))} =
    [T]_{B_{R^\prime}(\rho\sigma_i\rho^{-1}(x))}. \]
The last equality follows because by assumption on $R$ and $R^\prime$,
\[ R^\prime + \norm{\rho\sigma_i\rho^{-1}}_{\mathrm{Eucl}} = R^\prime + \norm{\sigma_i}_{\mathrm{Eucl}} < R, \]
hence $B_{R^\prime}(\rho\sigma_i\rho^{-1}(x)) \subset B_R(x)$, and we can use once again the assumption on $\rho$.

\noindent Consequently, if $t$ is a tile of $[T]_{B_{R^\prime}(x)}$ then $\rho\sigma_i\rho^{-1}(t) \in T$. For tiles $t$ of $T$ not intersecting $B_{R^\prime}(x)$ property ($\ast$) allows us to find a $\tau \in \mathrm{Aut}(T) \cap \mathrm{Trans}(\mathbb{E}^n)$ such that $t \in [T]_{B_{R^\prime}(\tau(x))}$. For such a $\tau$ we have by ($\ast\ast$) that
\[ \rho\sigma_i\rho^{-1}([T]_{B_{R^\prime(\tau(x))}}) = \rho\sigma_i\rho^{-1}(\tau([T]_{B_{R^\prime}(x)})) =
    \tau(\rho\sigma_i\rho^{-1}([T]_{B_{R^\prime}(x)})), \]
since translations commute. Using the equalities above and again ($\ast$) we obtain further
\[ \tau(\rho\sigma_i\rho^{-1}([T]_{B_{R^\prime}(x)})) = \tau([T]_{B_{R^\prime}(\rho\sigma_i\rho^{-1}(x))}) =
    [T]_{B_{R^\prime}(\rho\sigma_i\rho^{-1}(\tau(x))}. \]
Consequently, $\rho\sigma_i\rho^{-1}(t) \in T$, and we can conclude $\rho\sigma_i\rho^{-1} \in \mathrm{Aut}(T)$. \hfill $\Box$

\noindent That finishes the proof.
\end{proof}

\noindent As a direct consequence of this theorem we obtain:
\begin{thm} \label{MLD-cryst-thm}
Two crystallographic tilings of $\mathbb{E}^n$ are $\gamma$-MLD if and only if their automorphism groups are conjugated by the isometry $\gamma$ of $\mathbb{E}^n$. \hfill $\Box$
\end{thm}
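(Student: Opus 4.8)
The plan is to derive this statement directly from Theorem~\ref{LD-cryst-thm}, which already characterizes one-sided $\gamma$-local derivability between crystallographic tilings in terms of a conjugation inclusion of automorphism groups. Recall from Definition~\ref{LD-def} that two simple tilings $T$ and $T^\prime$ are $\gamma$-MLD precisely when $T^\prime$ is $\gamma$-LD from $T$ \emph{and} $T$ is $\gamma^{-1}$-LD from $T^\prime$. So I would apply Theorem~\ref{LD-cryst-thm} to each of these two directions separately and then combine the resulting inclusions.

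Concretely: the first condition, that $T^\prime$ be $\gamma$-LD from $T$, is equivalent by Theorem~\ref{LD-cryst-thm} to $\gamma\,\mathrm{Aut}(T)\,\gamma^{-1} \subset \mathrm{Aut}(T^\prime)$. The second condition, that $T$ be $\gamma^{-1}$-LD from $T^\prime$, is equivalent — again by Theorem~\ref{LD-cryst-thm}, now with the roles of $T, T^\prime$ and of $\gamma, \gamma^{-1}$ interchanged — to $\gamma^{-1}\,\mathrm{Aut}(T^\prime)\,\gamma \subset \mathrm{Aut}(T)$, which after conjugating both sides by $\gamma$ reads $\mathrm{Aut}(T^\prime) \subset \gamma\,\mathrm{Aut}(T)\,\gamma^{-1}$. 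Combining the two inclusions gives $\gamma\,\mathrm{Aut}(T)\,\gamma^{-1} = \mathrm{Aut}(T^\prime)$, i.e.\ the automorphism groups are conjugated by $\gamma$. Conversely, if $\mathrm{Aut}(T^\prime) = \gamma\,\mathrm{Aut}(T)\,\gamma^{-1}$ then both inclusions hold trivially, so Theorem~\ref{LD-cryst-thm} yields the two $\gamma$-LD directions and hence $\gamma$-MLD.

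There is essentially no obstacle left at this stage: all the substantive work — in particular the use of Bieberbach's Theorem~\ref{Bieb-thm} to produce a full-rank translation lattice inside $\mathrm{Aut}(T)$, and the covering argument in the proof of Theorem~\ref{LD-cryst-thm} showing that an isometry which agrees with $T$ on a sufficiently large ball must already lie in $\mathrm{Aut}(T)$ — has been carried out there. The only point to keep straight is the bookkeeping of which group is conjugated by $\gamma$ and which by $\gamma^{-1}$ when invoking Theorem~\ref{LD-cryst-thm} in the reverse direction; once that is handled the equivalence is a one-line consequence. If desired, one may additionally invoke Proposition~\ref{LD-til-LD-map-prop} to upgrade the conclusion from the tilings themselves to their hulls, so that $\Omega_T$ and $\Omega_{T^\prime}$ are MLD tiling spaces exactly when $\mathrm{Aut}(T)$ and $\mathrm{Aut}(T^\prime)$ are conjugate in $\mathrm{Isom}(\mathbb{E}^n)$.
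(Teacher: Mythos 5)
Your proposal is correct and matches the paper's argument exactly: the paper states Theorem~\ref{MLD-cryst-thm} as a direct consequence of Theorem~\ref{LD-cryst-thm}, obtained precisely by applying that theorem to both $\gamma$-LD directions and combining the two conjugation inclusions into an equality. Your bookkeeping of $\gamma$ versus $\gamma^{-1}$ is the only nontrivial point, and you have handled it correctly.
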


\begin{exm} \label{MLD-trans-ex}
The proof of Thm.~\ref{LD-cryst-thm} shows that two crystallographic tilings $T_1, T_2$ of $\mathbb{E}^n$ are MLD with respect to translations if and only if
\[ \mathrm{Aut}(T_1) \cap \mathrm{Trans}(\mathbb{E}^n) = \mathrm{Aut}(T_2) \cap \mathrm{Trans}(\mathbb{E}^n). \]
This is the case for the tilings $T$ and $T^\prime$ constructed in Ex.~\ref{cryst-til-ex}, but not for $T$ and $\bar{T}$. Consequently, $T$ and $T^\prime$ are examples of simple tilings that are MLD with respect to translations, but not with respect to arbitrary isometries.

\noindent This can be visualized using the (counter clockwise) $90^\circ$-rotation $\rho$ of $\mathbb{E}^2$ around the center $x$ of a square in $T$. Since $\rho$ is an automorphism of $T$, we have $[T]_{B_r(x)} = [\rho(T)]_{B_r(x)}$ for any radius $r$, but since the rhomb tiles of $T^\prime$ are not rotated to similarly directed rhombs by $\rho$ we cannot have $[T^\prime]_{\{x\}} = [\rho(T^\prime)]_{\{x\}}$:

\begin{center}
\begin{tikzpicture}[ ]

\draw (0,0) -- (1,0) -- (2,1) -- (1,1) -- cycle;
\draw (-1,0) -- (0,0) -- (1,1) -- (0,1) -- cycle;
\draw[dashed] (0.5,0.5) circle (0.4cm) node [black] at (0.5,0.5) {\textbullet} node[below] {$x$};
\draw (0.5,1.2) -- (0.5, 0.8) node at (0.5, 1.3) {\tiny $B_r(x)$};
\draw (1.3,0.7) -- (1.7,0.3) node at (2,0.15) {\tiny $[T^\prime]_{B_r(x)}$};

\draw[->] (2.2,0.5) -- (3.8,0.5) node at (3,0.6) {$\rho$};

\draw (4,0) -- (5,-1) -- (5,0) -- (4,1) -- cycle;
\draw (4,1) -- (5,0) -- (5,1) -- (4,2) -- cycle;
\draw[dashed] (4.5,0.5) circle (0.4cm) node [black] at (4.5,0.5) {\textbullet} node[below] {$x$};
\draw (4.8,0.5) -- (5.2, 0.5) node at (5.6, 0.5) {\tiny $B_r(x)$};
\draw (4.8,-0.5) -- (5.2,-0.5) node at (6,-0.5) {\tiny $[\rho(T^\prime)]_{B_r(x)}$};
\end{tikzpicture}
\end{center}

\noindent In a similar way, using a horizontal or vertical translation by $\frac{1}{2}$, we see that $T$ is not LD from $\bar{T}$, not even only with respect to translations.
\end{exm}

\begin{rem}
If $T$ and $T^\prime$ are crystallographic tilings of $\mathbb{E}^n$ such that $\gamma \cdot \mathrm{Aut}(T) \cdot \gamma^{-1} \subset \mathrm{Aut}(T^\prime)$ for some isometry $\gamma$, then by Thm.~\ref{LD-cryst-thm} and Prop.~\ref{LD-til-LD-map-prop} there exists a unique $\gamma$-LD factor map $f: \Omega_T \rightarrow \Omega_{T^\prime}$ between the hulls of $T$ and~$T^\prime$.

\noindent By Prop.~\ref{cryst-hull-prop} these hulls are described as
\[ \Omega_T \cong \mathrm{Isom}(\mathbb{E}^n)/\mathrm{Aut(T)},\
    \Omega_{T^\prime}\cong \mathrm{Isom}(\mathbb{E}^n)/\mathrm{Aut(T^\prime)}, \]
with $T$ and $T^\prime$ mapped to the residue classes of $\mathbbm{1}_{\mathbb{E}^n}$. Hence the hulls are the same as the orbits of $T$ resp. $T^\prime$. The construction in Prop.~\ref{LD-til-LD-map-prop} shows that $f$ can then be identified with the map
\[ \mathrm{Isom}(\mathbb{E}^n)/\mathrm{Aut}(T) \rightarrow \mathrm{Isom}(\mathbb{E}^n)/\mathrm{Aut}(T^\prime)\]
given by conjugation with $\gamma$. In particular, $f$ is a surjective $d\mathrm{-to-}1$ map where $d := [\mathrm{Aut}(T^\prime) : \gamma \cdot \mathrm{Aut}(T) \cdot \gamma^{-1}]$ is the subgroup index.

\noindent Vice versa, if $f: \Omega_T \rightarrow \Omega_{T^\prime}$ is a $\gamma$-factor map between the hulls of crystallographic tilings $T, T^\prime$ of $\mathbb{E}^n$ with $f(T) = T^\prime$ then $\phi \in \mathrm{Aut}(T)$ implies
\[ T^\prime = f(\phi(T)) = \gamma \phi \gamma^{-1} (f(T)) = \gamma \phi \gamma^{-1} (T^\prime), \]
hence $\gamma \phi \gamma^{-1} \mathrm{Aut}(T^\prime)$. Consequently, $\gamma \cdot \mathrm{Aut}(T) \cdot \gamma^{-1} \subset \mathrm{Aut}(T^\prime)$, and by Thm.~\ref{LD-cryst-thm}, $T^\prime$ is $\gamma$-LD from $T$. So in particular, MLD equivalence is the same as topological conjugacy for crystallographic tilings.

\noindent Considering the tilings $T, T^\prime$ constructed in Ex.~\ref{cryst-til-ex}, $\mathrm{Aut}(T^\prime)$ is a subgroup of $\mathrm{Aut}(T)$ of index $4$. The following figure indicates one tile of each of the four tilings in $\Omega_{T^\prime}$ mapped to the same standard lattice tiling in $\Omega_{T}$.

\begin{center}
\begin{tikzpicture}[ ]
\draw[step=1cm,gray,very thin] (-1.5,-1.5) grid (2.9,2.9);
\draw[dashed,thick] (3,2) -- (2,2) -- (1,1) -- (2,1) -- cycle;
\draw  (1,1) -- (2,1) --(1,2)--(0,2)-- cycle;
\draw[dashed,ultra thick](2,1) -- (1,2) -- (1,1) -- (2,0) -- cycle;
\draw[dotted] (2,1) -- (2,2) -- (1,1) -- (1,0) -- cycle;

 \end{tikzpicture}
\end{center}
\end{rem}

\subsection{Tilings with prescribed crystallographic automorphism group}

\begin{thm} \label{cryst-group-tiling-thm}
For a given crystallographic group $\Gamma \subset \mathrm{Isom}(\mathbb{E}^n)$ there exists a simple tiling $T$ of $\mathbb{E}^n$ such that $\mathrm{Aut}(T) = \Gamma$.
\end{thm}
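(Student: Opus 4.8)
\emph{Step 1: a highly symmetric model tiling.} Choose a point $P\in\mathbb{E}^n$ lying outside the countably many proper affine subspaces that are fixed by the non-identity elements of $\Gamma$; then $\mathrm{Stab}_\Gamma(P)=\{\mathbbm{1}_{\mathbb{E}^n}\}$, so $\Gamma$ acts freely on the orbit $X:=\Gamma P$. The set $X$ is a simple Delone set: relatively dense because $\Gamma$ is cocompact, uniformly discrete because $\Gamma$ is discrete, and every configuration $B_R(x)\cap X$ with $x\in X$ is an isometric copy of $B_R(P)\cap X$. Hence by Prop.~\ref{VT-prop} (and the criterion for simplicity following it) $T_0:=VT(X)$ is a simple tiling; its tiles are $V_{\gamma P}(X)=\gamma(D)$ with $D:=V_P(X)$, and $\Gamma$ acts freely and transitively on them, since $V_{\gamma P}(X)=V_P(X)$ forces $\gamma P=P$. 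Put $G:=\mathrm{Aut}(T_0)\supseteq\Gamma$ and $K:=\mathrm{Stab}_G(D)$. The group $K$ fixes the bounded polytope $D$, hence is finite and acts faithfully on $D$ by isometries (an isometry fixing a full-dimensional set is the identity); and because $\Gamma$ acts simply transitively on tiles, every $g\in G$ is uniquely of the form $g=\gamma k$ with $\gamma\in\Gamma$ and $k\in K$ (take $\gamma$ with $g(D)=\gamma(D)$). In particular $K\cap\Gamma=\{\mathbbm{1}_{\mathbb{E}^n}\}$ and $[G:\Gamma]=|K|<\infty$.

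\emph{Step 2: a generic subdivision of a fundamental domain.} Choose a point $q$ in the interior $D^\circ$ avoiding the finitely many proper affine subspaces fixed by the non-identity elements of $K$, and let $\mathcal{D}$ be the stellar subdivision of $D$ obtained by pulling $q$: its tiles are the pyramids $\mathrm{conv}(\{q\}\cup F)$ over the facets $F$ of $D$. These are convex polytopes meeting facet-to-facet, finitely many in number, $q$ is their only vertex lying in $D^\circ$, and $\mathcal{D}$ leaves $\partial D$ unchanged. Propagate $\mathcal{D}$ over $T_0$ by $\Gamma$-equivariance:
\[ T:=\{\gamma(\sigma)\ :\ \gamma\in\Gamma,\ \sigma\in\mathcal{D}\}. \]
This is well defined because $\Gamma$ acts simply transitively on the tiles of $T_0$, and it is a facet-to-facet tiling because two adjacent tiles $\gamma(D),\gamma'(D)$ of $T_0$ are subdivided only in their interiors, so the two refinements agree (trivially) on the shared facet. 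Thus $T$ is a simple tiling of $\mathbb{E}^n$ refining $T_0$, with finitely many prototiles, and $\Gamma\subseteq\mathrm{Aut}(T)$ by construction.

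\emph{Step 3: $\mathrm{Aut}(T)=\Gamma$.} It suffices to show $\mathrm{Aut}(T)\subseteq G$ and $\mathrm{Aut}(T)\cap G\subseteq\Gamma$. For the first inclusion, note that the vertices of $T$ are precisely the apices $A:=\{\gamma(q):\gamma\in\Gamma\}$ together with the vertices of $T_0$, and that for a sufficiently general $q$ the set $A$ is intrinsically recognisable inside $T$ — for instance as the set of vertices $w$ of $T$ whose closed star $\bigcup\{t\in T:w\in t\}$ is a convex polytope: for $w=\gamma(q)$ this star equals the convex tile $\gamma(D)$ of $T_0$, whereas at a vertex $v$ of $T_0$ the star is a union of corner pieces of the several $T_0$-tiles meeting $v$, and each such piece is non-convex at the apex of the tile it comes from, so the star is non-convex. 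Hence every $h\in\mathrm{Aut}(T)$ permutes $A$, therefore maps each star $\gamma(D)$ onto another star $\gamma'(D)$, so $h$ permutes the tiles of $T_0$ and $h\in\mathrm{Aut}(T_0)=G$. For the second inclusion, let $g\in\mathrm{Aut}(T)\cap G$ and write $g=\gamma k$ as in Step 1. Since $T$ is $\Gamma$-invariant, $g(T)=T$ is equivalent to $k(T)=T$; as $k(D)=D$, this forces $k$ to permute the tiles of $T$ contained in $D$, i.e.\ $k(\mathcal{D})=\mathcal{D}$, so $k$ fixes the unique vertex $q$ of $\mathcal{D}$ lying in $D^\circ$. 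By the choice of $q$ and the faithfulness of the $K$-action, $k=\mathbbm{1}_{\mathbb{E}^n}$, hence $g=\gamma\in\Gamma$. Combining the two inclusions gives $\mathrm{Aut}(T)=\mathrm{Aut}(T)\cap G=\Gamma$.

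\emph{Main obstacle.} The delicate point is to subdivide so that $T$ is simultaneously an honest facet-to-facet tiling \emph{and} generic enough to destroy every symmetry in $G\setminus\Gamma$ while keeping $T_0$ recoverable from $T$: a priori a $\Gamma$-equivariant refinement of $D$ must respect all the face-pairing identifications along $\partial D$, and these matching conditions fight against breaking symmetry. Pulling a single interior point of $D$ dissolves the matching conditions completely (the boundary of $D$ is never touched) and reduces the whole problem to the two elementary genericity statements used in Step 3 — that a general $q$ avoids the fixed loci of the finite group $K$, and that it makes the corner pieces of the $T_0$-tiles non-convex (automatic when $D$ is a simplex, and true for general $q$ otherwise). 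If finer or less regular tiles are desired one may iterate the pulling construction inside each pyramid, again without ever touching a tile boundary, so no new compatibility issues arise; the remaining checks — finite local complexity of $X$, the convex-star characterisation of $A$, and the precise genericity of $q$ — are routine.
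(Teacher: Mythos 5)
Your Steps 1 and 2 are essentially the paper's construction (Voronoi tiling of a free orbit $\Gamma\cdot P$, then a $\Gamma$-equivariant stellar subdivision of each cell from a generic interior point), and your Step 3 second inclusion is fine. The genuine gap is in the first inclusion of Step 3, i.e.\ in the claim that the apices $A=\{\gamma(q)\}$ are recognisable as the vertices of $T$ whose closed star is convex. The justification you give --- ``each such piece is non-convex at the apex of the tile it comes from, so the star is non-convex'' --- is wrong on both counts: a single corner piece can be convex for perfectly generic $q$ (in the unit square $D$ with vertex $v$, the corner piece at $v$ is the quadrilateral spanned by $v$, its two neighbours and $q$, which is convex whenever $q$ lies on the far side of the diagonal from $v$), and a union of non-convex pieces need not be non-convex. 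Worse, the conclusion itself fails: take $\Gamma$ a planar lattice whose Voronoi cell $D$ is a regular hexagon and choose $q$ near the centre of $D$ (an open set of choices, so it is compatible with every genericity condition you actually impose, namely avoiding the finitely many fixed subspaces of $K$). Then the star of a $T_0$-vertex $v$ in $T$ is the hexagon with vertices the three neighbours of $v$ and the three nearby apices, all close to a circle of radius the edge length around $v$ --- a convex polygon. So both apices and $T_0$-vertices have convex stars, the set $A$ is not recovered, and the inclusion $\mathrm{Aut}(T)\subseteq\mathrm{Aut}(T_0)$ is not established.

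The repair is exactly the extra genericity the paper builds in, and it makes the detour through $\mathrm{Aut}(T_0)$ unnecessary: choose $q$ so that its distances to the vertices of $D$ are pairwise distinct and also distinct from all edge lengths of $D$. Then any $\delta\in\mathrm{Aut}(T)$ maps a fixed subdivision cone $C_1\subset D$ to some subdivision cone inside a cell $\gamma(D)$; the distance pattern of the edges at the apex forces $\delta(C_1)=\gamma(C_1)$ with apex going to apex and each vertex to the corresponding vertex, and since the vertices of $C_1$ affinely span $\mathbb{E}^n$, the affine maps $\delta$ and $\gamma$ agree, so $\delta=\gamma\in\Gamma$. If you prefer to keep your two-step structure, you must replace the convex-star criterion by a condition you can actually enforce and verify (e.g.\ the same metric genericity of $q$, used to show no isometry can match the star of an apex with the star of a $T_0$-vertex), since convexity of stars is not controlled by avoiding finitely many proper subspaces.
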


\noindent For the proof of this theorem we need the following propositions:
\begin{prop} \label{stab-prop}
For almost all $x\in \mathbb{E}^n$, the stabilizer $\mathrm{Stab}_\Gamma(x)$ of a crystallographic group $\Gamma \subset Isom(\mathbb{E}^n)$ is trivial.
\end{prop}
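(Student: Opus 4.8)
The plan is to show that the set of points with non-trivial stabilizer is a countable union of proper affine subspaces of $\mathbb{E}^n$, hence a Lebesgue null set; here "almost all $x$" is read in the measure-theoretic sense (and, parenthetically, also in the Baire-category sense, since proper affine subspaces are nowhere dense).

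First I would record that $\Gamma$ is countable. By Bieberbach's Theorem~\ref{Bieb-thm} the translation subgroup $\Gamma \cap \mathrm{Trans}(\mathbb{E}^n)$ is a lattice of full rank, hence isomorphic to $\mathbb{Z}^n$, and the point group $\Gamma/(\Gamma \cap \mathrm{Trans}(\mathbb{E}^n))$ is finite; so $\Gamma$ is a finite extension of $\mathbb{Z}^n$ and thus countably infinite. Writing $\mathrm{Fix}(\phi) := \{ x \in \mathbb{E}^n : \phi(x) = x \}$ for an isometry $\phi$, the set of points with non-trivial stabilizer is therefore the \emph{countable} union
\[ S := \{ x \in \mathbb{E}^n : \mathrm{Stab}_\Gamma(x) \neq \{\mathbbm{1}_{\mathbb{E}^n}\} \} = \bigcup_{\mathbbm{1}_{\mathbb{E}^n} \neq \phi \in \Gamma} \mathrm{Fix}(\phi). \]

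The key step is to show that for every $\phi \in \Gamma$ with $\phi \neq \mathbbm{1}_{\mathbb{E}^n}$ the set $\mathrm{Fix}(\phi)$ has Lebesgue measure $0$. Write $\phi = \tau \cdot \alpha$ with $\tau \in \mathrm{Trans}(\mathbb{E}^n)$ and $\alpha \in O(\mathbb{E}^n_O)$. Using the identification $\mathbb{E}^n \cong \mathbb{E}^n_O$ from Section~\ref{cryst-groups-ssec}, the fixed-point equation $\phi(x) = x$ becomes the inhomogeneous linear equation
\[ (\mathbbm{1}_{\mathbb{E}^n} - \alpha)(\overrightarrow{Ox}) = \overrightarrow{O\tau(O)}. \]
If $\alpha = \mathbbm{1}_{\mathbb{E}^n}$ then $\phi$ is a non-trivial translation and this equation has no solution, so $\mathrm{Fix}(\phi) = \emptyset$. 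If $\alpha \neq \mathbbm{1}_{\mathbb{E}^n}$ then $\ker(\mathbbm{1}_{\mathbb{E}^n} - \alpha) \subsetneq \mathbb{E}^n_O$, so its solution set, when non-empty, is a coset of this proper subspace, i.e.\ an affine subspace of dimension $\leq n-1$. In either case $\mathrm{Fix}(\phi)$ is contained in a proper affine subspace and hence is a null set.

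A countable union of Lebesgue null sets is null, so $S$ is null and its complement — the set of $x \in \mathbb{E}^n$ with $\mathrm{Stab}_\Gamma(x) = \{\mathbbm{1}_{\mathbb{E}^n}\}$ — has full measure, which is the claim. I expect no genuine obstacle here; the only point needing a little care is translating the fixed-point equation into linear-algebraic form via $\mathrm{Trans}(\mathbb{E}^n) \cong \mathbb{E}^n_O$, after which the argument is the elementary observation that a non-identity isometry fixes at most a proper affine subspace.
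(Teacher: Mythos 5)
Your proof is correct, and it takes a genuinely different (and more elementary) route than the paper. The paper does not argue element-by-element over $\Gamma$: it first invokes Prop.~\ref{symmorph-prop} to embed $\Gamma$ in a supergroup $\Gamma^\ast$ that is symmorphic with respect to a point $P$, so that $\Gamma^\ast = (\Gamma^\ast \cap \mathrm{Trans}(\mathbb{E}^n)) \cdot G$ with $G$ finite; it then observes that $\tau\cdot\alpha \in \mathrm{Stab}_{\Gamma^\ast}(x)$ forces $(\alpha - \mathbbm{1}_{\mathbb{E}^n})(x)$ to lie in the lattice $P + (\Gamma^\ast \cap \mathrm{Trans}(\mathbb{E}^n))$, and since $\mathrm{Im}(\alpha - \mathbbm{1}_{\mathbb{E}^n})$ meets that lattice in a lattice, each of the finitely many $\alpha \neq \mathbbm{1}_{\mathbb{E}^n}$ excludes only a null set of $x$ (and $\alpha = \mathbbm{1}_{\mathbb{E}^n}$, $\tau \neq 0$ excludes nothing). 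In effect the paper packages the union over the infinitely many translations into a lattice condition for each of finitely many point-group elements, at the cost of passing to $\Gamma^\ast$. You instead use only that $\Gamma$ is countable (which Bieberbach gives immediately, though discreteness alone would do) and that the fixed-point set of any non-identity isometry is contained in a proper affine subspace, so the bad set is a countable union of null sets. This avoids Prop.~\ref{symmorph-prop} entirely and also yields the Baire-category version for free; what the paper's formulation buys is an explicit description of the bad set as finitely many lattice-indexed families of affine subspaces, which makes the crystallographic structure visible but is not needed for the measure-zero conclusion. Both arguments are sound.
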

\begin{proof}
By Prop.~\ref{symmorph-prop} there exists a crystallographic group $\Gamma^\ast \subset \mathrm{Isom}(\mathrm{E}^n)$ symmorphic with respect to a point $P \in \mathrm{E}^n$ and containing $\Gamma$. It is enough to show that $\mathrm{Stab}_{\Gamma^\ast}(x) = \{\mathbbm{1}_{\mathrm{E}^n}\}$.

\noindent Since $\Gamma^\ast$ is symmorphic with respect to $P$ we have that
\[ \Gamma^\ast = (\Gamma^\ast \cap \mathrm{Trans}(\mathbb{E}^n)) \cdot G \subset \mathrm{Isom}(\mathrm{E}^n) \]
for a finite group $G \subset O(\mathbb{E}^n_P)$. An isometry $\rho = \tau \cdot \alpha$, with $\tau \in \Gamma^\ast \cap \mathrm{Trans}(\mathbb{E}^n)$ and $\alpha \in G$, is in $\mathrm{Stab}_{\Gamma^\ast}(x)$ if and only if $\alpha(x) = x - \tau$, that is
\[ (\alpha - \mathbbm{1}_{\mathrm{E}^n})(x) \in P + (\Gamma^\ast \cap \mathrm{Trans}(\mathbb{E}^n)). \]
But the subspace $\mathrm{Im}(\alpha-\mathbbm{1}_{\mathrm{E}^n}) \subset \mathbb{E}^n_P$ intersects the lattice $P + (\Gamma^\ast \cap \mathrm{Trans}(\mathbb{E}^n))$ also in a lattice. Consequently, $\rho = \tau \cdot \alpha \not\in \mathrm{Stab}_{\Gamma^\ast}(x)$ for almost all $x \in \mathbb{E}^n$ as long as $\mathrm{Im}(\alpha-\mathbbm{1}_{\mathrm{E}^n}) \neq (0)$, that is, $\alpha \neq \mathbbm{1}_{\mathrm{E}^n}$. If  $\alpha = \mathbbm{1}_{\mathrm{E}^n}$ then $\tau \cdot \mathbbm{1}_{\mathrm{E}^n} = \tau \not\in \mathrm{Stab}_{\Gamma^\ast}(x)$ for all $x \in \mathbb{E}^n$ as long as $\tau \neq 0$.

\noindent Since the $\alpha$ vary over the finite group $G$ this implies the claim.
\end{proof}

\begin{prop} \label{VT-cryst-prop}
Let $\Gamma \subset Isom(\mathbb{E}^n)$ be a crystallographic group. Then
for every point $x\in \mathbb{E}^n$, the orbit $\Gamma \cdot x$ is a Delone set. Furthermore, the associated Voronoi-cell tiling $VT(\Gamma \cdot x)$   is a simple tiling.
\end{prop}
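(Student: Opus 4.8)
The plan is to reduce everything to Bieberbach's structure theorem and to the facts on Voronoi-cell tilings already established. First I would set $L := \Gamma\cap\mathrm{Trans}(\mathbb{E}^n)$, which by Thm.~\ref{Bieb-thm} is a lattice of full rank, with point group $\Gamma/L$ finite of some order $m$. Choosing coset representatives $\gamma_1 = \mathbbm{1}_{\mathbb{E}^n},\gamma_2,\dots,\gamma_m$ of $L$ in $\Gamma$ one obtains
\[ \Gamma\cdot x \;=\; \bigcup_{i=1}^m L\cdot\gamma_i(x), \]
a finite union of cosets of $L$, i.e.\ of translates of the full-rank lattice $L$ viewed as a point set in $\mathbb{E}^n$. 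Relative density is then immediate, because already the single coset $L\cdot x\subseteq\Gamma\cdot x$ is relatively dense (with $R_0$ the covering radius of $L$).

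For uniform discreteness I would first pass to the pairwise distinct cosets among $L\cdot\gamma_1(x),\dots,L\cdot\gamma_m(x)$. Inside each such coset any two distinct points are at distance at least the length $\ell>0$ of a shortest nonzero vector of $L$; and for $i\neq j$ the point $\gamma_i(x)$ does not lie in the discrete set $L\cdot\gamma_j(x)$, so its distance $\delta_{ij}$ to that set is strictly positive. Since only finitely many such pairs occur, $r := \tfrac12\min\{\ell,\ \delta_{ij}: i\neq j\} > 0$, and by construction $B_r(p)\cap(\Gamma\cdot x)=\{p\}$ for every $p\in\Gamma\cdot x$. Hence $\Gamma\cdot x$ is a Delone set, and by Prop.~\ref{VT-prop} the Voronoi-cell tiling $VT(\Gamma\cdot x)$ is a tiling of $\mathbb{E}^n$.

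It remains to see that $VT(\Gamma\cdot x)$ is simple, and here the key remark is that $\Gamma$ acts on $\Gamma\cdot x$ transitively by isometries of $\mathbb{E}^n$ preserving the set: for any $R>0$ and any $p=\delta(x)\in\Gamma\cdot x$ with $\delta\in\Gamma$,
\[ B_R(p)\cap(\Gamma\cdot x) \;=\; \delta\big(B_R(x)\big)\cap\delta\big(\Gamma\cdot x\big) \;=\; \delta\big(B_R(x)\cap(\Gamma\cdot x)\big), \]
so the pointed configuration $\big(B_R(p)\cap(\Gamma\cdot x),\,p\big)$ is congruent to $\big(B_R(x)\cap(\Gamma\cdot x),\,x\big)$ for every $p$. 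Thus for each $R$ there is exactly one configuration up to isometry, which is far more than the finiteness required by the proposition stated just before the definition of a simple Delone set; therefore $\Gamma\cdot x$ is a simple Delone set and $VT(\Gamma\cdot x)$ is a simple tiling. The same identity also shows $\Gamma\subseteq\mathrm{Aut}\big(VT(\Gamma\cdot x)\big)$, which is why this tiling is the natural raw material for Thm.~\ref{cryst-group-tiling-thm}.

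The only step needing genuine care is the uniform-discreteness bound: it relies on $\Gamma/L$ being \emph{finite}, so that the cross-distances $\delta_{ij}$ between the finitely many lattice cosets making up $\Gamma\cdot x$ have a positive minimum; an infinite point group would destroy this. Every other step is a direct citation of Bieberbach's theorem or of the results on Voronoi-cell tilings above.
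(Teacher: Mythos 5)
Your argument is correct, and its overall skeleton (Bieberbach's Theorem~\ref{Bieb-thm} for relative density, then Prop.~\ref{VT-prop} to get a tiling, then congruence of the Voronoi cells under the transitive isometric action of $\Gamma$ to get simplicity) matches the paper's. The one step where you genuinely diverge is uniform discreteness: the paper passes via Prop.~\ref{symmorph-prop} to a symmorphic supergroup $\Gamma^\ast \supset \Gamma$, writes $\Gamma^\ast\cdot x = (\Gamma^\ast\cap\mathrm{Trans}(\mathbb{E}^n))\cdot(G\cdot x)$ with $G$ finite, observes that this larger set is uniformly discrete, and then restricts to the subset $\Gamma\cdot x$; you instead decompose $\Gamma\cdot x$ directly into the finitely many lattice cosets $L\cdot\gamma_i(x)$, discard coinciding orbits, and take half the minimum of the shortest lattice vector length and the finitely many positive cross-distances $\delta_{ij}$. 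Your route is more self-contained, since it uses only Bieberbach's theorem and not the symmorphic embedding (which the paper needs anyway for Prop.~\ref{stab-prop}); the paper's route is shorter given that Prop.~\ref{symmorph-prop} is already available, and it avoids the small bookkeeping you correctly handle (identical cosets, translation-invariance of the cross-distances). For the simplicity of $VT(\Gamma\cdot x)$ the two arguments are essentially equivalent: the paper notes $V_{\gamma(p)}(\Gamma\cdot x)=\gamma\bigl(V_p(\Gamma\cdot x)\bigr)$, while you verify the local-configuration criterion (one configuration up to isometry for every $R$), which rests on the same transitivity identity and on Rem.~\ref{uniform-local-VT-rem}.
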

\begin{proof}
Since $\Gamma$ is crystallographic $\Gamma \cap \mathrm{Trans}(\mathbb{E}^n)$ is a lattice of full rank, by Bieberbach's Theorem~\ref{Bieb-thm}. Hence $\Gamma \cdot x \supset (\Gamma \cap \mathrm{Trans}(\mathbb{E}^n)) \cdot x$ is certainly relatively dense.

\noindent To prove uniform discreteness we pass again to a crystallographic group $\Gamma^\ast$ containing $\Gamma$ and being symmorphic with respect to a point $P \in \mathbb{E}^n$, as in the proof of Prop.~\ref{stab-prop}. So let $G \subset O(\mathbb{E}^n_P)$ be the finite group such that $\Gamma^\ast = (\Gamma^\ast \cap \mathrm{Trans}(\mathbb{E}^n)) \cdot G$. Then $\Gamma^\ast \cdot x = (\Gamma^\ast \cap \mathrm{Trans}(\mathbb{E}^n)) \cdot (G \cdot x)$ is uniformly discrete, as $G \cdot x$ is finite and $\Gamma^\ast \cap \mathrm{Trans}(\mathbb{E}^n) \subset \mathrm{Trans}(\mathbb{E}^n)$ is a discrete subgroup.

\noindent So the subset $\Gamma \cdot x \subset \Gamma^\ast \cdot x$ is also uniformly discrete, and $\Gamma \cdot x$ is a Delone set. Then Prop.~\ref{VT-prop} implies that the Voronoi-cell tiling $VT(\Gamma \cdot x)$ is a tiling. It has only a finite number of tiles up to isometries because $V_{\gamma (p)}=\gamma (V_p)$:
\[  \norm{\gamma (p)-\gamma (q)} =\norm{p-q} .\]
\end{proof}

\begin{thm}
For every crystallographic group $\Gamma \subset Isom(\mathbb{E}^n)$ there exists a simple tiling $T$ with $Aut(T)=\Gamma$.
\end{thm}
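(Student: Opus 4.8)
The plan is to start from the Voronoi-cell tiling $VT(\Gamma \cdot x)$ of a generic orbit and then subdivide its tiles to kill the extra symmetries. By Prop.~\ref{VT-cryst-prop} the tiling $VT(\Gamma \cdot x)$ is simple for every $x$, and by Prop.~\ref{stab-prop} we may choose $x$ so that $\mathrm{Stab}_\Gamma(x) = \{\mathbbm{1}_{\mathbb{E}^n}\}$; then $\Gamma$ acts simply transitively on the orbit $\Gamma \cdot x$, so each Voronoi cell $V_{\gamma(x)}$ is $\gamma(V_x)$ and $\Gamma \subset \mathrm{Aut}(VT(\Gamma \cdot x))$. The problem is that the automorphism group of this Voronoi tiling is typically larger than $\Gamma$: there may be isometries permuting the cells that do not come from $\Gamma$ (for instance the point group could act on a single cell, or the whole lattice tiling could have a bigger symmetry group, as in Ex.~\ref{cryst-til-ex}). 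The remedy is to replace the prototile by a sufficiently asymmetric subdivision.

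\emph{Step 1: set-up.} Fix $x$ as above, let $T_0 := VT(\Gamma \cdot x)$, and let $P := V_x$ be the Voronoi cell around $x$, a convex $n$-polytope. Every tile of $T_0$ is $\gamma(P)$ for a unique $\gamma \in \Gamma$. I would first record that $\mathrm{Aut}(T_0) \supseteq \Gamma$ and that $\mathrm{Aut}(T_0)$ is itself a crystallographic group (it is discrete because $T_0$ is simple and contains $\Gamma$ cocompactly, and $\mathrm{Isom}(\mathbb{E}^n)/\mathrm{Aut}(T_0)$ is compact since it is a quotient of the compact $\mathrm{Isom}(\mathbb{E}^n)/\Gamma$). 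In particular $\Gamma$ has finite index in $\mathrm{Aut}(T_0)$, and any automorphism of $T_0$ maps $P$ to some tile $\gamma(P)$, hence is determined by an element of $\Gamma$ composed with an element of the (finite) group $\mathrm{Sym}(P) := \{\phi \in \mathrm{Isom}(\mathbb{E}^n) : \phi(P) = P,\ \phi(T_0) = T_0\}$ of tiling-preserving self-isometries of $P$.

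\emph{Step 2: the asymmetric subdivision.} Pick a point $q$ in the interior $P^o$ in ``sufficiently general position'': concretely, choose $q$ so that the $n+1$ (or more) distances from $q$ to the facets of $P$, together with the combinatorial data of how the cone over $\partial P$ with apex $q$ meets the facets, are not preserved by any nontrivial element of the finite group $\mathrm{Sym}(P)$, nor by any isometry identifying $P$ with an adjacent tile in a way not coming from $\Gamma$. Such $q$ exists because each ``bad'' isometry imposes a proper algebraic (indeed affine) condition on $q$, and there are only finitely many bad isometries to avoid (finitely many elements of the point group of $\mathrm{Aut}(T_0)$, each giving a closed nowhere-dense subset of $P^o$). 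Now subdivide $P$ into the convex polytopes $\langle q \rangle * F$, one for each facet $F$ of $P$ (the pyramids with apex $q$ over the facets); these are convex polytopes, they meet facet-to-facet along the pyramids over the $(n-1)$-faces of $\partial P$, and transporting this subdivision by $\Gamma$ gives a subdivision $T$ of $T_0$ that is still a tiling in the sense of the paper. Since the pyramids are $\Gamma$-translates of finitely many shapes (the orbit of $\Gamma$ on the facet-pyramids of $P$ is finite because $\Gamma$ acts with finite point group), $T$ is simple, and $\Gamma \subseteq \mathrm{Aut}(T)$ because $\Gamma$ preserves both $T_0$ and the chosen subdivision.

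\emph{Step 3: $\mathrm{Aut}(T) = \Gamma$.} Let $\phi \in \mathrm{Aut}(T)$. Then $\phi$ permutes the subdivided pieces, and I would argue that $\phi$ must also preserve the coarser tiling $T_0$: the ``center'' $q$-point of each cell $\gamma(P)$ is intrinsically recognizable from $T$ (it is the unique vertex of $T$ lying in the interior of a $T_0$-tile and incident to pyramids over all facets of that tile — here one uses that $q$ was chosen generically so that no other vertex of $T$ has this incidence pattern, and that $T_0$-tiles are recoverable as unions of pyramids sharing such a center). Hence $\phi \in \mathrm{Aut}(T_0)$, so $\phi = \gamma \circ \sigma$ with $\gamma \in \Gamma$ and $\sigma$ carrying $P$ to $P$ while preserving $T_0$, i.e. $\sigma \in \mathrm{Sym}(P)$; but $\sigma$ must also carry the subdivision of $P$ to itself, hence fix $q$ and permute the facet-pyramids, which by the genericity of $q$ forces $\sigma = \mathbbm{1}_{\mathbb{E}^n}$. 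Therefore $\phi = \gamma \in \Gamma$, and $\mathrm{Aut}(T) = \Gamma$.

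The main obstacle is Step~2--3, the genericity argument: one must be careful that avoiding finitely many bad isometries really suffices, i.e. that the relevant symmetry group ($\mathrm{Aut}(T_0)$, equivalently its finite point group together with $\mathrm{Sym}(P)$) is finite modulo $\Gamma$ and that each potential extra symmetry of the subdivided tiling, when restricted near a single cell, imposes a nontrivial (hence avoidable) constraint on the apex $q$ — including the subtle point that a $\phi \in \mathrm{Aut}(T)$ need not a priori preserve $T_0$, so one must prove the center points are combinatorially distinguished. Everything else (convexity of the pyramids, facet-to-facet meeting, finiteness of prototiles, $\Gamma \subseteq \mathrm{Aut}(T)$) is routine given the earlier results, in particular Prop.~\ref{VT-cryst-prop}, Prop.~\ref{stab-prop}, and Bieberbach's Theorem~\ref{Bieb-thm}.
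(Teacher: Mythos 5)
Your construction is the same as the paper's: take the Voronoi-cell tiling of a free orbit $\Gamma\cdot x$ (using Prop.~\ref{stab-prop} and Prop.~\ref{VT-cryst-prop}) and subdivide each cell into pyramids over its facets with apex at a generically chosen interior point; your Steps 1 and 2 match the paper's construction up to notation. The gap is in Step 3, and it is exactly the point you flag yourself: you never actually prove that the apex points are recognizable from $T$ alone, i.e.\ that an arbitrary $\phi\in\mathrm{Aut}(T)$ preserves the coarse tiling $T_0$. Your stated genericity conditions (distances from $q$ to the \emph{facets} of $P$, plus unspecified ``combinatorial data'') do not obviously rule out, say, an isometry carrying one facet-pyramid onto a facet-pyramid of another cell while sending the apex to a base vertex, so the claim that $q$ is ``the unique vertex with this incidence pattern'' is asserted rather than established; and the detour through $\mathrm{Aut}(T_0)$ and $\mathrm{Sym}(P)$ (whose finiteness claims are fine) does not by itself supply this.

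The paper closes this by a purely metric choice of the apex which makes the detour unnecessary: choose $y\in t_x$ so that the distances from $y$ to the vertices of $t_x$ are pairwise distinct and also distinct from all edge lengths of $t_x$ (possible by avoiding finitely many spheres and hyperplanes, a condition depending only on the single polytope $t_x$, not on $\mathrm{Aut}(T_0)$). Then any $\delta\in\mathrm{Aut}(T)$ maps the cone $C_1\subset t_x$ to some subdivision cone of some tile $\gamma(t_x)$; since the edge lengths from the apex are pairwise distinct and cannot be confused with edges of $t_x$, one gets $\delta(C_1)=\gamma(C_1)$ with apex going to apex and each edge to the corresponding edge, so $\delta$ and $\gamma$ agree on the vertices of $C_1$, which affinely span $\mathbb{E}^n$; hence $\delta=\gamma\in\Gamma$. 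If you replace your facet-distance/incidence genericity by these vertex-distance conditions, your Step 3 goes through directly on single cones, with no need to first recover $T_0$, to show the centers are combinatorially distinguished, or to invoke that $\mathrm{Aut}(T_0)$ is crystallographic.
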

\begin{proof}
We can construct a simple tiling $T$ with $Aut(T)=\Gamma$ as follows:
\begin{enumerate}
\item Choose $x \in \mathbb{E}^n$ such that $Stab_\Gamma(\lbrace x\rbrace )=\lbrace \mathbbm{1}_{\mathbb{E}^n}\rbrace$. Such an $x$ must exist by Prop.~\ref{stab-prop}.
\item Construct the Voronoi-cell tiling $VT(\Gamma\cdot x)$ of the orbit $\Gamma \cdot x$. This is a simple tiling by Prop.~\ref{VT-cryst-prop}, and for every tile $t \in VT(\Gamma\cdot x)$ there exists exactly one $\gamma \in \Gamma$ such that $t = \gamma(V_x(\Gamma \cdot x))$.
\item Let $ t_x \in VT(\Gamma \cdot x) $ be the Voronoi-cell containing $x$, and choose a sufficiently general point $y \in t_x$. Subdivide each tile $\gamma \cdot t_x$ of $VT(\Gamma \cdot x)$, $\gamma \in \Gamma$ by cones having each facet of $\gamma \cdot t_x$ as the basis and the point $\gamma \cdot y$ as the vertex.
\end{enumerate}
This subdivision tiling $T_\Gamma$ is simple and will have automorphism group $\Gamma$, because the possibly existing  additional automorphisms of $VT(\Gamma \cdot x)$ not in $\Gamma$ do not map the subdivision cones onto each other.

\noindent In more details, choose $y$ such that the distances of $y$ to the vertices of $t_x$ are mutually distinct, and are also different from all the lengths of edges of $t_x$. This is possible if we choose $y$ away from a finite number of spheres around the vertices of $t_x$  with radii equal to the edge lengthes  of $t_x$ and also away from the finite number of hyperplanes reflecting one vertex of $t_x$ to another.

\noindent Obviously, $\Gamma \subset Aut(T_\Gamma)$. On the other hand, let $\delta \in Isom(\mathbb{E}^n)$ be an automorphism of $T_\Gamma$.  Let $C_1 \cup C_2\cup...\cup C_r=t_x$ be the subdivision of $t_x$ into cones $C_i$. Note that $\delta (C_1)$ must be one of the subdivision cones in a tile $\gamma \cdot t_x \in VT(\Gamma \cdot x)$. Since the lengths of the edges to the vertex of the cone are all different by construction, $C_1':=\delta (C_1)=\gamma(C_1)$, and the vertex and the edges of $C_1'$ must be mapped to the vertex of $C_1'$ and the same edges by $\delta $ and $\gamma$. Since vertices of $C_1$ span all of $\mathbb{E}^n$, $\delta $ and $\gamma$ are uniquely determined by the images of these vertices, as affine transformations of $\mathbb{E}^n$, hence are equal.
\end{proof}

\begin{exm}
The subdivision of the Voronoi-cell tiling is necessary to kill additional automorphisms, as the example of the standard lattice group $\Gamma=\mathbb{Z} \cdot e_1 \oplus \mathbb{Z}\cdot e_2 \subset \mathrm{Isom}(\mathbb{E}^n)$ shows, with $e_1$ and $e_2$ orthogonal standard basis vectors in $\mathbb{R}^2$:

\noindent Since all the elements of $\Gamma$ are translation, every point $x\in \mathbb{E}^2$ has trivial stabilizer in $\Gamma$. The Voronoi-cell tiling of $\Gamma \cdot x$ is the standard lattice tiling $T$ consisting of tiles  which are squares with vertices of the form $(n,m),(n+1,m),(n,m+1),(n+1,m+1) ;\;\; n,m\in  \mathbb{Z}$, as described in Fig.~\ref{stand-lattice-fig}. And in Ex.~\ref{cryst-til-ex}, we have calculated that
\[ Aut(T)= \Gamma \rtimes D_4. \]

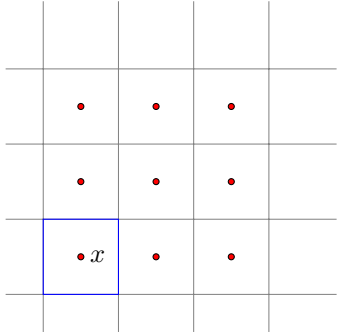
\begin{figure}[h!  ]
\begin{center}
\begin{tikzpicture}[ x=1cm,y=1cm]
\draw[step=1cm,gray,very thin] (-1.5,-1.5) grid (2.9,2.9);
 \draw[fill=red] (1.5,1.5) circle(.04)   ;
\draw[fill=red] (.5,1.5) circle(.04)   ;
\draw[fill=red] (-.5,1.5) circle(.04)   ;
\draw[fill=red] (1.5,.5) circle(.04)   ;
\draw[fill=red] (1.5,-.5) circle(.04)   ;
\draw[fill=red] (.5,.5) circle(.04)   ;
\draw[fill=red] (.5,-.5) circle(.04)   ;
\draw[fill=red] (-.5,.5) circle(.04)   ;
\draw[fill=red] (-.5,-.5) circle(.04) node[right]{$x$}  ;
 \draw[blue] (0,0) -- (-1,0) -- (-1,-1) -- (0,-1) -- cycle;

 \end{tikzpicture}
\end{center}
\caption{Voronoi-cell tiling of $\mathbb{Z}\cdot e_1\oplus \mathbb{Z}\cdot e_2$}
 \label{stand-lattice-fig}
\end{figure}

\noindent After the subdivision with $y$ sufficiently general we obtain a tiling with automorphism group $\mathbb{Z}\cdot e_1\oplus \mathbb{Z}\cdot e_2$.

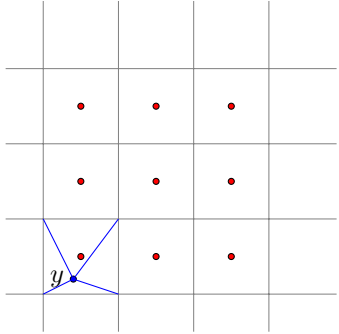
\begin{figure}[h!  ]
\begin{center}
\begin{tikzpicture}[ x=1cm,y=1cm]
\draw[step=1cm,gray,very thin] (-1.5,-1.5) grid (2.9,2.9);
 \draw[fill=red] (1.5,1.5) circle(.04)   ;
\draw[fill=red] (.5,1.5) circle(.04)   ;
\draw[fill=red] (-.5,1.5) circle(.04)   ;
\draw[fill=red] (1.5,.5) circle(.04)   ;
\draw[fill=red] (1.5,-.5) circle(.04)   ;
\draw[fill=red] (.5,.5) circle(.04)   ;
\draw[fill=red] (.5,-.5) circle(.04)   ;
\draw[fill=red] (-.5,.5) circle(.04)   ;
\draw[fill=red] (-.5,-.5) circle(.04)   ;
\draw[fill=blue] (-.6,-.8) circle(.04) node[left]{$y$};
 \draw[blue] (-.6,-.8) -- (-1,0);
 \draw[blue] (-.6,-.8) -- (0,0);
 \draw[blue] (-.6,-.8) -- (0,-1);
 \draw[blue] (-.6,-.8) -- (-1,-1);

 \end{tikzpicture}
\end{center}
\caption{Tiling of $\mathbb{E}^2$ with automorphism group $\mathbb{Z}\cdot e_1\oplus \mathbb{Z}\cdot e_2$.}
 \label{lattice-aut-fig}
\end{figure}
\end{exm}

\bibliographystyle{alpha}


\begin{thebibliography}{Man74}

\bibitem[Axl97]{Axl97}
Sheldon Axler.
\newblock {\em Linear algebra done right}.
\newblock Undergraduate Texts in Mathematics. Springer-Verlag, New York, second
  edition, 1997.

\bibitem[Ber66]{Ber66}
Robert Berger.
\newblock The undecidability of the domino problem.
\newblock {\em Mem. Amer. Math. Soc. No.}, 66:72, 1966.

\bibitem[BG13]{BG13}
Michael Baake and Uwe Grimm.
\newblock {\em Aperiodic order. {V}ol. 1}, volume 149 of {\em Encyclopedia of
  Mathematics and its Applications}.
\newblock Cambridge University Press, Cambridge, 2013.
\newblock A mathematical invitation, With a foreword by Roger Penrose.

\bibitem[BSJ91]{BSJ91}
M.~Baake, M.~Schlottmann, and P.~D. Jarvis.
\newblock Quasiperiodic tilings with tenfold symmetry and equivalence with
  respect to local derivability.
\newblock {\em J. Phys. A}, 24(19):4637--4654, 1991.

\bibitem[CS06]{CS06}
Alex Clark and Lorenzo Sadun.
\newblock When shape matters: deformations of tiling spaces.
\newblock {\em Ergodic Theory Dynam. Systems}, 26(1):69--86, 2006.

\bibitem[Far81]{Far81}
Daniel~R. Farkas.
\newblock Crystallographic groups and their mathematics.
\newblock {\em Rocky Mountain J. Math.}, 11(4):511--551, 1981.

\bibitem[FHK02]{FHK02}
Alan Forrest, John Hunton, and Johannes Kellendonk.
\newblock Topological invariants for projection method patterns.
\newblock {\em Mem. Amer. Math. Soc.}, 159(758), 2002.

\bibitem[Gr{\"u}03]{Gru03}
Branko Gr{\"u}nbaum.
\newblock {\em Convex polytopes}, volume 221 of {\em Graduate Texts in
  Mathematics}.
\newblock Springer-Verlag, New York, second edition, 2003.
\newblock Prepared and with a preface by Volker Kaibel, Victor Klee and
  G{\"u}nter M. Ziegler.

\bibitem[GS89]{GS89}
Branko Gr{\"u}nbaum and G.~C. Shephard.
\newblock {\em Tilings and patterns}.
\newblock W. H. Freeman and Company, New York, 1989.

\bibitem[Pen80]{Pen79}
R.~Penrose.
\newblock Pentaplexity: a class of nonperiodic tilings of the plane.
\newblock {\em Math. Intelligencer}, 2(1):32--37, 1979/80.

\bibitem[Rad94]{Rad94}
Charles Radin.
\newblock The pinwheel tilings of the plane.
\newblock {\em Ann. of Math. (2)}, 139(3):661--702, 1994.

\bibitem[Sad08]{Sad08}
Lorenzo Sadun.
\newblock {\em Topology of tiling spaces}, volume~46 of {\em University Lecture
  Series}.
\newblock American Mathematical Society, Providence, RI, 2008.

\bibitem[SBGC84]{She84}
D.~Shechtman, I.~Blech, D.~Gratias, and J.~W. Cahn.
\newblock {Metallic Phase with Long-Range Orientational Order and No
  Translational Symmetry}.
\newblock {\em Phys.Rev.Lett.}, 53:1951--1953, 1984.

\bibitem[Sen95]{Sen95}
Marjorie Senechal.
\newblock {\em Quasicrystals and geometry}.
\newblock Cambridge University Press, Cambridge, 1995.

\end{thebibliography}

\def\cprime{$'$}

\end{document}